\documentclass[11pt,a4paper,reqno,lualatex]{amsart} 
\usepackage[marginparwidth=0pt,margin=20truemm]{geometry} 

\usepackage{fancyhdr}
\fancypagestyle{RIKENreport}{
	\fancyhead[R]{\small RIKEN-iTHEMS-Report-26} 
}

\usepackage{mypackage} 
\usepackage{mycommand} 

\renewcommand{\iu}{i}
\newcommand{\odd}{\mathrm{odd}}

\usepackage[luatex, pdfencoding=auto,hypertexnames=false]{hyperref}
\hypersetup{colorlinks=false}

\numberwithin{equation}{section} 
\usepackage{mytheoremeng} 

\begin{document}

\title[Quantum modularity of signatures in TQFT and generalized Dedekind sums]{Quantum modularity of signatures in TQFT and generalized Dedekind sums}
\author[Y. Murakami]{Yuya Murakami}
\address{RIKEN Center for Interdisciplinary Theoretical and Mathematical Sciences (iTHEMS), RIKEN, Wako 351-0198, Japan}
\email{yuya.murakami@riken.jp}

\date{\today}

\maketitle


\begin{abstract}
	We prove the quantum modularity of the signature of $ \mathrm{SU}(2) $-TQFT for a genus 2 surface, which was conjectured by March\'{e}--Masbaum in 2025.
	Our approach is based on a quantum modularity of generalized Dedekind sums associated with general modular forms. 
	In the case of Eisenstein series for $ \Gamma(N) $, these generalized Dedekind sums admit trigonometric sum expressions, which coincide with the formula for the $ \mathrm{SU}(2) $-TQFT signature.
	Furthermore, we express both the $ \mathrm{SU}(2) $-TQFT and generalized Dedekind sums as radial limits of Eichler integrals.
\end{abstract}


\tableofcontents
\thispagestyle{RIKENreport} 


\section{Introduction} \label{sec:intro}


Quantum modularity provides a bridge between number theory and low-dimensional topology.
In this paper, we establish an explicit relationship between the signatures of $ \SU(2) $-TQFT and generalized Dedekind sums, and prove the quantum modularity of both.   

The signatures of $\SU(2)$-TQFT can be regarded as natural refinements of the dimensions of conformal blocks appearing in the Verlinde formula. 
The question of their computation has been around for 30 years, see for example  Blanchet--Habegger--Masbaum--Vogel~\cite[Remark~4.12]{Blanchet-Habegger-Masbaum-Vogel}. 
Deroin--March\'e~\cite{Deroin-Marche} clarified their structure by giving an explicit description in terms of $1+1$-dimensional TQFTs, equivalently, in terms of Frobenius algebras. 
Furthermore, they are related to character varieties of two-bridge knots, as shown by March\'e~\cite{Marche2023}, revealing their rich topological structure.

For a closed surface of genus $ g \ge 0 $ and a rational number $ x = r/p $ with coprime odd integers $ 1 \le r < p $, we denote by $ \sigma_g (x) \in \Z $ the signature associated with the corresponding TQFT.
For $g=0,1$, this quantity is independent of $ r $. 
Therefore, the first nontrivial case occurs when $g=2$. 
In this case, March\'{e}--Masbaum~\cite{Marche-Masbaum} conjectured that $ \sigma_2 (x) $ exhibits quantum modularity, based on an analysis of its asymptotic behavior as $ p \to \infty $.

\begin{conj}[{March\'{e}--Masbaum~\cite[p.~4]{Marche-Masbaum}}] \label{conj:Marche-Masbaum}
	One has
	\[
	\sigma_2 \left( \frac{x}{2x + 1} \right) - \sigma_2 \left( x \right)
	=
	2r^2 + 2rp + p^2 - 1.
	\]
\end{conj}

In this paper, we prove this conjecture.

\begin{thm} \label{thm:main}
	\zcref{conj:Marche-Masbaum} is true.
\end{thm}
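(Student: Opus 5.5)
The plan is to go through the generalized Dedekind sums, as the abstract suggests. First I will write $\sigma_2(r/p)$ as an explicit trigonometric sum, using the Deroin--March\'e description of the signature by a $1+1$-dimensional TQFT (a Frobenius algebra). The genus-$g$ signature is then a sum over the simple objects $j$ of the level-$p$ theory, roughly $\sum_j \epsilon_j\, (\text{quantum dimension of } j)^{2-2g}$. Here the quantum dimensions are $\sin(\pi (j+1) r/p)/\sin(\pi r/p)$, and the signs $\epsilon_j$ are those of a Hermitian form evaluated at the Galois conjugate root of unity $\zeta = e^{\pi i r/p}$. For $g=2$ this gives a sum of terms $\pm\sin^2(\pi r/p)/\sin^2(\pi (j+1) r/p)$ over $1\le j\le p-2$, with signs that are sign patterns of products of sines.

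The central step is to identify this sum with a generalized Dedekind sum $s_f(\gamma)$ attached to an Eisenstein series $f$ of weight $2$ on $\Gamma(N)$ with small level $N$ (presumably $N=2$ or $4$, because of the oddness of $r$ and $p$). Here $\gamma$ is a matrix with first column $(r,p)^{\top}$. I will define $s_f(\gamma)$ through the period of the Eichler integral of $f$, namely $\tilde f(\gamma\tau)-\tilde f(\tau)$ modulo the polynomial correction. I will then show by the standard partial-fraction/cotangent computation, as for classical Dedekind sums, that for Eisenstein series it becomes a finite trigonometric sum. Matching the sign pattern $\epsilon_j$ with the characters appearing in $f$ is the key combinatorial identity.

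The quantum modularity then comes from the cocycle relation $s_f(\gamma\delta)=s_f(\gamma)+s_f(\delta)$ plus a polynomial term, which follows from the modularity of $f$. The map $x\mapsto x/(2x+1)$ is the fractional linear action of $\begin{pmatrix}1&0\\2&1\end{pmatrix}\in\Gamma(2)$. It sends $r/p$ to $r/(2r+p)$, which again has odd coprime numerator and denominator, so the difference $\sigma_2(x/(2x+1))-\sigma_2(x)$ becomes $s_f$ of this fixed generator plus a cocycle correction. For weight $2$ the correction is quadratic in $(r,p)$; this is the analogue of the $(a+d)/12c$ terms in the Dedekind reciprocity law. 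I will compute the constant from the value on this generator and then compare with $2r^2+2rp+p^2-1$, checking small cases such as $x=1/3$ to fix normalizations.

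I expect the main obstacle to be the exact matching of the TQFT sign pattern with the Eisenstein-series Dedekind sum. The signature involves signs, not smooth weights, and these must be produced by a character or by a difference of Eisenstein series at different cusps. I would first try writing each sign $\epsilon_j$ as a Dedekind-type sawtooth expression, for instance via $\operatorname{sign}\sin(\pi k r/p)$ expanded as a finite Fourier series in $\cot$. After that I would reorganize the double sum as a sum over cusps of $\Gamma(4)$ of Eisenstein series.
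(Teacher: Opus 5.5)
There is a genuine gap, and it is in the structure of the argument rather than in missing details.

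\textbf{A weight-$2$ Dedekind sum cannot produce the right-hand side.} Take a weight-$2$ form $f$ on $\Gamma(2)$ and $\gamma=\smat{1&0\\2&1}$. Its Eichler integral has weight $0$, and the reciprocity law reads $S_f(\gamma x)-S_f(x)=R_{f,\gamma}(x)-\frac{2a_0^{(x)}}{p(2r+p)}$. Here $R_{f,\gamma}(X)=-\widehat{L}_f(1;-\tfrac12)-a_0\bigl(\tfrac{2X+1}{2}-\tfrac{1}{2(2X+1)}\bigr)$, and $a_0^{(x)}$ takes only finitely many values. The matrix-cocycle version you describe behaves the same way: its defect is bounded, like the $\pm3$ of the Rademacher function, and the term $(a+d)/12c$ has degree $0$ in the entries.

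So for $x=r/p$ in a compact set avoiding $-1/2$, the correction stays bounded as $p\to\infty$. By contrast, $2r^2+2rp+p^2-1=p^2(2x^2+2x+1)-1\ge p^2/2-1$ is unbounded. Quadratic polynomials appear in Dedekind reciprocity only after multiplying by $bc$, and that is unavailable when both sides are functions of $x$. A factor $p^2$ is compatible with $x\mapsto x/(2x+1)$ only through the weight $-2$ automorphy factor, $p^2(2x+1)^2=(2r+p)^2$.

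This is exactly what the paper exploits: $\sigma_2(x)=p^2S_2^{\mathrm{odd}}(x)-2S_0^{\mathrm{odd}}(x)$, where $S_2^{\mathrm{odd}}$ is attached to the weight-$4$ series $E_4^{\mathrm{odd}}$ and $S_0^{\mathrm{odd}}$ to $E_2^{\mathrm{odd}}$. The exact period polynomials $R_{E_4^{\mathrm{odd}},\gamma}=\frac1{16}(2X^2+2X+1)$ and $R_{E_2^{\mathrm{odd}},\gamma}=-\frac18$ contribute $p^2(2x^2+2x+1)$ and $-1$. The cusp terms $\frac{1}{p^3(2r+p)}$ and $\frac{1}{2p(2r+p)}$ are nonzero because $p$ is odd, and they cancel in this combination. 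Calibrating constants on small cases cannot substitute for computing these exactly.

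\textbf{Your starting formula for $\sigma_2$ is not correct.} The Hermitian form is diagonal in the basis of admissible colorings of a trivalent graph. So $\sigma_2$ counts signs over admissible color triples, and there is no reason for it to equal a Verlinde sum with signs attached to the summands. It does not:
\begin{itemize}
\item With your normalization, the sum is $O(1)$ at $r=1$. But there the form is definite, so $\sigma_2$ equals the Verlinde dimension, which is of order $p^3$.
\item With the correct normalization $\frac p2\sin^{-2}(\pi jr/p)$, $1\le j\le p-1$, the case $p=5$, $r=3$ gives terms $5\pm\sqrt5$. Every signed sum is then irrational or lies in $\{0,\pm20\}$, whereas $\sigma_2(3/5)=12$.
\end{itemize}

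The paper's actual input is March\'e--Masbaum's trigonometric formula (their Theorem 3.1), in which the signs are already gone. Simplifying $T(n;x)$ with the addition and triple-angle formulas, and evaluating $\sum_{n\ \mathrm{odd}}\cos(\pi nx)/\sin^2(\pi nx)=(p^2-1)/12$, gives $\sigma_2(x)=\frac2p\sum_{n\ \mathrm{odd}}\cot^3(\pi n/2p)/\sin(\pi nx)$. In this formula $1/\sin(\pi nx)$ is precisely the level-$2$ twist of the Eisenstein series. So the sign-matching you expect to be the main obstacle never arises. The identity $\cot^3=\frac12\cot''-\cot$ is what produces the weight-$4$ plus weight-$2$ decomposition.
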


We outline the strategy of the proof.

Our starting point is the following trigonometric formula proved by March\'{e}--Masbaum~\cite[Theorem 3.1]{Marche-Masbaum}:
\begin{equation} \label{eq:trig_formula}
	\sigma_2 \left( x \right)
	=
	\frac{1}{6p^2} - \frac{1}{6} + \frac{1}{4p^2} 
	\sum_{\substack{
			1 \le n \le p - 2, \\
			\text{$ n $ odd}
	}}
	\frac{T(n; x)}{\sin^3 (\pi n/2p) \sin^2 (\pi nx/2)},
\end{equation}
where
\[
T(n; x)
\coloneqq
\sum_{\varepsilon \in \{ \pm 1 \}} (p + \varepsilon)
\left(
\sin \left( \frac{\pi (2r - 3 \varepsilon)n}{2p} \right)
+ 3 \sin \left( \frac{\pi (2r + \varepsilon)n}{2p} \right)
\right).
\]

Starting from this expression, we prove the following formula (\zcref{prop:sign_TQFT_simple_expression_g=2}), which shows an explicit relationship between $ \sigma_2 (x) $ and generalized Dedekind sums.

\begin{thm}[\zcref{prop:sign_TQFT_simple_expression_g=2,cor:sign_TQFT_Eichler_int}] \label{thm:main_expression_sigma}
	We have
	\begin{align}
		\sigma_2 \left( x \right)
		&=
		\frac{2}{p} 
		\sum_{\substack{
				1 \le n \le p - 2, \\
				\text{$ n $ odd}
		}}
		\frac{\cot^3 (\pi n/2p)}{\sin (\pi nx)}
		=
		p^2 S_2^{\odd} \left( x \right) - 2 S_0^{\odd} \left( x \right)
		\\
		&=
		\lim_{\tau \to 0} \left(
		p^2 \frac{16}{\pi^3 \iu} E_{-2}^{\odd} (\tau + x)
		- \frac{8}{\pi \iu} E_0^{\odd} (\tau + x)
		- \frac{1}{3p^2 \tau}
		\right),
	\end{align}
	where, for an even integer $ g \ge 0 $, we define Dedekind sums of level 2 and weight $ g $ as
	\[
	S_g^{\odd} \left( x \right)
	\coloneqq
	\frac{1}{p^{g+1}} 
	\sum_{\substack{
		1 \le n \le p - 2, \\
		\text{$ n $ odd}
	}}
	\frac{\cot^{(g)} (\pi n/2p)}{\sin (\pi nx)} 
	\in \Q
	\]
	and we define the Eisenstein series of weight $ -g $ and level $ 2 $ by
	\[
	E_{-g}^{\odd} (\tau)
	\coloneqq
	\sum_{n \ge 1, \, \odd} \sigma_{-g-1} (n) q^{n/2},
	\quad \text{ where }
	\sigma_{-g-1} (n) \coloneqq \sum_{d \mid n} d^{-g-1}.
	\]
\end{thm}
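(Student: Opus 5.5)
We prove the three equalities in order: first the trigonometric simplification, then the identification with the Dedekind sums $S_g^{\odd}$, and finally the Eichler-integral limit.

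\textbf{Step 1: the cotangent formula.} We start from the March\'e--Masbaum formula \eqref{eq:trig_formula}. Write $\theta = \pi n/2p$ and note that $\pi n x/2 = r\theta$. Expanding $T(n;x)$ with the addition formula, the $\varepsilon$-sum produces $p\sin(2r\theta)$ multiplied by a polynomial in $\cos\theta$, plus $\cos(2r\theta)$ multiplied by $\sin\theta$-terms.

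We then rewrite everything over $\sin(\pi n x)=\sin(2r\theta)$, using $\sin^2(r\theta) = (1-\cos 2r\theta)/2$. The symmetry $n \mapsto 2p-n$ would have been the natural tool for pairing terms, but it leaves the range $1\le n\le p-2$. So instead we compare the two sides term by term in $\cos(2r\theta)$. The $r$-independent part becomes a sum of the form $\sum_{n \text{ odd}} \cot^k\theta$ or $\sum 1/\sin^k\theta$. These are classical, evaluable as polynomials in $p$, and they should cancel the constant $\tfrac{1}{6p^2}-\tfrac16$.

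I expect this to be the main obstacle. The cleanest route is probably the reverse direction: subtract $\frac{2}{p}\frac{\cot^3\theta}{\sin 2r\theta}$ from the summand of \eqref{eq:trig_formula}, show that the difference is a trigonometric expression whose sum over odd $n$ telescopes or reduces to known power sums, and evaluate those sums by residues of $\cot(p z)$-type kernels.

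\textbf{Step 2: the Dedekind-sum form.} Here $\cot^{(g)}$ is the $g$-th derivative. We have $\cot'' = 2\cot\csc^2 = 2\cot^3 + 2\cot$. Hence
\[
p^2 S_2^{\odd}(x) - 2S_0^{\odd}(x) = \frac{1}{p}\sum_{n}\frac{2\cot^3\theta + 2\cot\theta - 2\cot\theta}{\sin(\pi n x)},
\]
which is exactly the first expression. This step is immediate.

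\textbf{Step 3: the Eichler-integral limit.} Expand $1/\sin(\pi n x)$ and the $q$-series. The series $E^{\odd}_{-g}(\tau+x) = \sum_{m,d \text{ odd}} d^{-g-1} e^{\pi i m d(\tau+x)}$ should be regrouped by the residue of $m \bmod 2p$, using $e^{\pi i m d x}$, which depends on $md$ modulo $2p$.

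For each fixed $d$, the sum over odd $m$ in a residue class, as $\tau\to0$, is handled by the Euler--Maclaurin expansion or by the partial-fraction (Lipschitz) formula $\sum_m e^{\pi i m (\tau + a)}$. This gives a pole term in $1/\tau$ plus a constant term expressible through $\cot$ at $\pi n/2p$, via finite Fourier inversion over $\Z/2p$. The sum over $d$ of $d^{-g-1}$ twisted by characters of $\Z/2p$ then produces the derivatives $\cot^{(g)}$, through the standard identity $\sum_d d^{-g-1} e^{2\pi i d a}$ for odd $g+1$, i.e.\ Bernoulli/cotangent values of periodized zeta functions.

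The normalizations $16/(\pi^3 i)$ and $8/(\pi i)$ come from $(2\pi)^{g+1}/(g+1)!$-type factors. The pole contributions combine to $\frac{1}{3p^2\tau}$, which we verify by computing the $1/\tau$ coefficient, essentially $\sum_{d \text{ odd}} d^{-g-2}$ times averages.

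Convergence of the radial limit is justified by the standard asymptotic expansion of Eichler integrals of level $\Gamma(2)$ Eisenstein series near cusps, term by term for each residue class.
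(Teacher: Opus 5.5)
Your Step 2 is the paper's argument ($\cot''=2\cot^3+2\cot$). Step 1, however, is only a plan, and the mechanism it describes is not the one that works.

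\textbf{Step 1.} The second denominator in the March\'e--Masbaum formula must be read as $\sin^2(\pi nx)=\sin^2(2r\theta)$, which is how the paper uses it. Read literally as $\sin^2(r\theta)$, the formula gives the non-integer $332/27$ at $x=1/3$ instead of $\sigma_2(1/3)=4$. So rewriting via $\sin^2(r\theta)=(1-\cos 2r\theta)/2$ leads nowhere.

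With the correct reading, your own observation finishes the split once you use $\cos3\theta+3\cos\theta=4\cos^3\theta$ and $3\sin\theta-\sin3\theta=4\sin^3\theta$. These give $T(n;x)=4\sum_\varepsilon(p+\varepsilon)\bigl(\sin(\pi nx)\cos^3\theta+\varepsilon\cos(\pi nx)\sin^3\theta\bigr)$, so the summand is exactly $\frac{2}{p}\frac{\cot^3\theta}{\sin\pi nx}+\frac{2}{p^2}\frac{\cos\pi nx}{\sin^2\pi nx}$.

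No $r$-independent sums of powers of $\cot\theta$ or $\csc\theta$ appear. The leftover depends on $r$ term by term, and only its total is $r$-independent, so comparing term by term in $\cos 2r\theta$ cannot work. The missing idea is the identity $\sum_{n\ \mathrm{odd},\,n\le p-2}\cos(\pi nx)/\sin^2(\pi nx)=(p^2-1)/12$. The paper proves it as follows:
\begin{itemize}
\item apply the reflection $n\mapsto p-n$, not $n\mapsto 2p-n$; it stays in range, swaps parity, and flips the sign because $r$ is odd;
\item extend the sum to $1\le m\le p-1$;
\item use the permutation $m\mapsto rm$ of $(\Z/p\Z)^\times$;
\item use $\frac{\cos 2a}{\sin^2 2a}=\frac14(\csc^2a-\sec^2a)$ together with $\sum_{m=1}^{p-1}\csc^2\frac{\pi m}{p}=\frac{p^2-1}{3}$ and $\sum_{m=1}^{p-1}\sec^2\frac{\pi m}{p}=p^2-1$.
\end{itemize}

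\textbf{Step 3.} This is a different route from the paper's, which combines the general expansion $\calE_f(\tau)=\frac{a_0^{(x)}}{(k-1)p^k(\tau-x)}+S_f(x)+O(\tau-x)$ with an identification of $S_{E_{g+2}^{\odd}}$ and of the cusp constant. Your route is a good independent check. But the decisive step is the one you wave through, and carrying it out shows the printed identity is wrong.

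The $m$-sum is an exact geometric series: $E_{-g}^{\odd}(x+\tau)=\frac{\iu}{2}\sum_{d\ \mathrm{odd}}d^{-g-1}/\sin(\pi d(x+\tau))$. The $p\mid d$ terms equal $-p^{-g-1}E_{-g}^{\odd}(p\tau)$ and carry the pole.

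For $g=2$, dominated convergence works, since $\abs{\sin(\pi d(x+\iu t))}\ge\abs{\sin\pi dx}$, and $E_{-2}^{\odd}$ has zero constant term at the cusp $0$. You get constant term $\frac{\iu\pi^3}{32}S_2^{\odd}(x)$. So the prefactor must be $\frac{32}{\pi^3\iu}$, which is $2^{g+2}g!/(\pi^{g+1}\iu)$ from part (2) of the paper's level-$2$ theorem. With $16$, the $1/\tau$ terms do not even cancel.

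For $g=0$, your termwise limit is illegitimate, because $\sum_d 1/(d\sin\pi dx)$ converges only conditionally. A Mellin analysis per residue class produces an extra constant. In addition, $E_0^{\odd}=\frac14\log(\theta_3/\theta_4)$ has constant term $-\frac{\log 2}{4}$ at the cusp $0$.

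The net result is that the constant term of $E_0^{\odd}(x+\tau)$ is $\frac{\pi\iu}{4}S_0^{\odd}(x)+\frac{\log 2}{4}$.
\begin{itemize}
\item At $x=1$ this is immediate from $E_0^{\odd}(1+\tau)=-E_0^{\odd}(\tau)$ and $S_0^{\odd}(1)=0$.
\item The real part is the same at every $r/p$ with $r,p$ odd. These points are $\Gamma(2)$-equivalent to $1$, and the weight-$0$ transformation constants and pole corrections are purely imaginary.
\end{itemize}
The paper's route loses this term in the $k=2$, $j=1$ case of its Eisenstein $L$-value computation. There $\zeta(1;\cdot)$ is used at its pole, and the $m=p$ term contributes $\zeta'(0;\tfrac12)=-\tfrac12\log 2$.

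Hence the displayed limit, with $32$, equals $\sigma_2(x)+\frac{2\iu\log 2}{\pi}$. You should aim to prove the corrected identity, for example $\sigma_2(x)$ as the real part of that limit, not the one as printed.
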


The leading term $ p^2 (16/\pi^3 \iu) E_{-2}^{\odd} (\tau + x) $ in the final radial limit formula of \zcref{thm:main_expression_sigma}
was already identified in March\'{e}--Masbaum~[Proposition 6.9]\cite{Marche-Masbaum}.
The above radial limit formula refines their result and provides a direct description of $\sigma_2(x)$ in terms of Eichler integrals, as suggested after Remark 6.14 in \cite{Marche-Masbaum}.

\zcref{conj:Marche-Masbaum} follows from \zcref{thm:main_expression_sigma} and the modular transformation formula (\zcref{thm:Dedekind_sum_level_2} \zcref{item:thm:Dedekind_sum_level_2:asymp}):
\begin{equation} \label{eq:Dedekind_sum_level_2:quantum_modular_explicit}
	\begin{aligned}
		S_0^{\odd} \left( \frac{x}{2x+1} \right) - S_0^{\odd} (x)
		&=
		\frac{1}{2} + \frac{1}{2p(2r+p)},
		\\
		(2x + 1)^2 S_2^{\odd} \left( \frac{x}{2x+1} \right) - S_2^{\odd} (x)
		&=
		2x^2 + 2x + 1 + \frac{1}{p^3 (2r+p)}.
	\end{aligned}
\end{equation}
These formulas are special cases of quantum modularity of generalized Dedekind sum for a general modular form $ f $, obtained by taking $ f = E_2^{\odd} $ and $ E_4^{\odd} $.

Generalized Dedekind sums originate in the work of Apostol \cite{Apostol:gen'd_Dedekind_sum}
on transformation formulae of Lambert series. 
We adopt the modular form
framework developed by Fukuhara \cite{Fukuhara:gen_Dedekind_symb}.

Let $ f(\tau) $ be a modular form for a congruence subgroup $ \Gamma \subset \SL_2(\Z) $ of weight $ k \ge 2 $.
Let $ x = r/p \in \Q $ be a rational number with coprime integers $ r \in \Z $ and $ p \in \Z_{>0} $.
Let $ N $ be a positive integer such that $ \Gamma(N) \subset \Gamma $.
We write the Fourier expansion $ f(\tau) = \sum_{n=0}^\infty a_n q^{n/N} $.

We define \emph{completed twisted $ L $-function associated with $ f $} as
\begin{align}
	\widehat{L}_f(s; x) &\coloneqq
	- \left( \frac{N}{2\pi} \right)^s \bm{e} \left( -\frac{s}{4} \right) \Gamma(s) 
	\sum_{n=1}^\infty \frac{a_n}{n^s} \bm{e} \left( \frac{nx}{N} \right)
\end{align}
for $ s \in \bbC $ with $ \Re(s) > k $, where we denote $ \bm{e}(z) \coloneqq e^{2\pi\iu z} $ for a complex number $ z $.
This function extends holomorphically to $ \bbC \smallsetminus \{ 0, k \} $.
We define a \emph{generalized Dedekind sum associated with $ f $} as 
\[
S_f (x) \coloneqq
\widehat{L}_f(k-1; x).
\]
This satisfies the following quantum modularity.

\begin{thm}[{\zcref{thm:reciprocity_gen'd_Ded_sum}}] \label{thm:main_general_Dedekind_sum}
	For any $ \gamma = \smat{a & b \\ c & d} \in \Gamma $ with $ cx+d \neq 0 $, we have
	\[
	(cx+d)^{k-2} S_f \left( \frac{ax+b}{cx+d} \right) - S_f (x)
	=
	R_{f, \gamma} (x) - \frac{a_0^{(x)}}{p^k} \frac{c}{cx+d},
	\]
	where the Fourier constant $ a_0^{(x)} \in \bbC $ is defined by the expansion $ (\eval{f}_k \delta)(\tau) = a_0^{(x)} + O(q^{1/N_x}) $
	for some matrix $ \delta = \smat{r & r' \\ p & p'} \in \SL_2(\Z) $ and some positive integer $ N_x $ and
	regularized period polynomial associated with $ f $ is defined as 
	\begin{align}
		R_{f, \gamma} (X) &\coloneqq
		-\sum_{j=0}^{k-2} \binom{k-2}{j} \widehat{L}_f \left( j+1; -\frac{d}{c} \right) \left( \frac{cX+d}{c} \right)^{k-j-2}
		\\
		&\phant
		- \frac{a_0}{k-1} \left( \left( \frac{cX+d}{c} \right)^{k-1} + \frac{1}{(-c)^{k-1} (cX+d)} \right)
		\quad \in \frac{1}{cX+d} \bbC[X].
	\end{align}
\end{thm}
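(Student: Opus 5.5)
Our plan is to realize $S_f(x)=\widehat{L}_f(k-1;x)$ as a regularized period integral of $f$ along the vertical ray from $x$ to $\iu\infty$, and then read off the transformation law from the modularity $f|_k\gamma=f$ by splitting paths. Concretely, for $\Re(s)>k$ we write the Mellin transform
\[
\Gamma(s)\Bigl(\frac{N}{2\pi}\Bigr)^s\sum_{n\ge1}\frac{a_n}{n^s}\bm{e}\Bigl(\frac{nx}{N}\Bigr)
=\int_0^{\infty}\bigl(f(x+\iu t)-a_0\bigr)t^{s-1}\,dt ,
\]
so that, after absorbing $\bm{e}(-s/4)$ via $\tau=x+\iu t$, one gets $\widehat{L}_f(s;x)=-\int_x^{x+\iu\infty}(f(\tau)-a_0)(\tau-x)^{s-1}\,d\tau$. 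We then continue analytically by splitting at a height $t_0$. Near the cusp $x$ we substitute $\tau=\delta\tau'$ and use $f(\tau)=(p\tau'\text{-factor})^{-k}(f|_k\delta)(\tau')=a_0^{(x)}(\cdots)+O(\text{exp.\ decay})$. The constant term $a_0^{(x)}$ produces an explicit meromorphic piece. This piece is holomorphic at $s=k-1$ and contributes a computable multiple of $a_0^{(x)}/p^k$, while the remaining piece is entire. The result is a regularized Eichler-type integral
\[
S_f(x)=-\int_x^{\iu\infty}{}^{\!*}\,f(\tau)(\tau-x)^{k-2}\,d\tau ,
\]
with the regularization made explicit at both ends.

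Next we apply $\gamma$. We substitute $\tau=\gamma\tau'$ in the integral for $S_f(\gamma x)$ and use the identities $\gamma\tau'-\gamma x=(\tau'-x)/((c\tau'+d)(cx+d))$ and $d(\gamma\tau')=d\tau'/(c\tau'+d)^2$, together with $f(\gamma\tau')=(c\tau'+d)^kf(\tau')$. This yields $(cx+d)^{k-2}S_f(\gamma x)=-\int_x^{\gamma^{-1}\iu\infty}{}^{*}f(\tau')(\tau'-x)^{k-2}d\tau'$, with $\gamma^{-1}\iu\infty=-d/c$. Subtracting $S_f(x)$ leaves an integral from $-d/c$ to $\iu\infty$ of $f(\tau)(\tau-x)^{k-2}$. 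We expand $(\tau-x)^{k-2}=\bigl((\tau+d/c)-(cx+d)/c\bigr)^{k-2}$ binomially, and each term becomes $\widehat{L}_f(j+1;-d/c)$. This gives the sum in $R_{f,\gamma}$, with $X=x$ and the sign coming from the orientation.

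The main obstacle is the bookkeeping of the regularizations. The constant term $a_0$ at $\iu\infty$ and the constant terms at the cusps $x$, $\gamma x$ and $-d/c$ do not transform naively under the path change, and the regularized integrals are not literally additive. We would handle this by working with truncated integrals (cutting at $\Im\tau=T$ near $\iu\infty$ and at small horocycles near the rational cusps), applying the change of variables to the truncated versions, and taking limits. The $a_0$-contributions near $\iu\infty$ then produce the term $-\frac{a_0}{k-1}\bigl((\frac{cX+d}{c})^{k-1}+\frac{1}{(-c)^{k-1}(cX+d)}\bigr)$; the second summand arises because $\gamma$ maps the horocycle at $\iu\infty$ to one at $-d/c$. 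The cusp-$x$ constant $a_0^{(x)}$ gives $-\frac{a_0^{(x)}}{p^k}\frac{c}{cx+d}$, since the horocycle at $x$ is rescaled by the factor $(cx+d)^{-2}$, which produces a boundary difference proportional to $c/(cx+d)$. We would double-check the normalizations on the test case $f=E_2^{\odd}$, for which \eqref{eq:Dedekind_sum_level_2:quantum_modular_explicit} is available.
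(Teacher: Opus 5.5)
This is essentially the paper's first proof: you represent $S_f(x)$ as a regularized integral from $x$ to $\iu\infty$ (\zcref{lem:twisted_L-func}), pull $S_f(\gamma x)$ back by $\gamma$, and expand binomially about $-d/c$ to produce the values $\widehat{L}_f(j+1;-d/c)$ (\zcref{lem:rep_period_poly}). The paper differs only in how it handles your ``bookkeeping obstacle'': instead of truncating, it splits at an arbitrary base point $z_0\in\bbH$ (using $\gamma(z_0)$ for $S_f(\gamma x)$), so the two integrals from $x$ to $z_0$ cancel exactly and $-\frac{a_0^{(x)}}{p^k}\frac{c}{cx+d}$ falls out of $\frac{1}{\gamma x-\gamma z_0}=\frac{(cx+d)^2}{x-z_0}-c(cx+d)$.

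One correction: the correct kernel is $\widehat{L}_f(s;x)=\int_x^{\iu\infty}(f(\tau)-a_0)(x-\tau)^{s-1}\,d\tau$. Your $-\int_x^{\iu\infty}(f(\tau)-a_0)(\tau-x)^{s-1}\,d\tau$ equals $\bm{e}(s/2)\widehat{L}_f(s;x)$, so your formula for $S_f$ has the wrong sign for even $k$. The slip is harmless only because you use the same kernel for $S_f$ and for $\widehat{L}_f(j+1;-d/c)$ and the identity is homogeneous; you still need to fix it to reach the stated normalization of $R_{f,\gamma}$.
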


We give three proofs of this theorem.
The first proof is based on the integral representations of the $ L $-function $ \widehat{L}_f(s; x) $ and the regularized period polynomial $ R_{f, \gamma} (X) $.
The second and third proofs are based on asymptotic behavior and modular transformation of the Eichler integral of $ f $.
In the former, we establish asymptotic behavior using an integral representation of the Eichler integral.
In the latter, we derive asymptotic behavior via the Mellin summation formula.

By taking a modular form $ f $ in \zcref{thm:main_general_Dedekind_sum} to be an Eisenstein series for $ \Gamma(N) $, we obtain the following statement.

\begin{cor}[{\zcref{cor:reciprocity_gen'd_Ded_sum_Gamma(N)}}] \label{cor:main_gen'd_Ded_sum_Eisenstein}
	Let $ \chi, \psi \colon \Z/N\Z \to \bbC $ be periodic maps such that $ \chi(0) = \psi(0) = 0 $ and 
	$ \chi(-m) \psi(-n) = (-1)^k \chi(m) \psi(n) $ for any $ m, n \in \Z/N\Z $.
	We define the generalized Dedekind sum as
	\[
	S_k^{\chi, \psi}(x)
	\coloneqq
	-\frac{N^{k-1}}{k-1}
	\sum_{1 \le m \le Np-1} \chi(m) 
	B_{1} \left( \frac{m}{Np} \right)
	\widetilde{B}_{k-1}^{\widehat{\psi}} \left( \frac{mx}{N} \right),
	\]
	where $ B_j (x) $ is the Bernoulli polynomial, $ \widetilde{B}_j (x) $ is the periodic Bernoulli polynomial, $ \widehat{\psi} $ is the discrete Fourier transform of $ \psi $, and
	\[
	\widetilde{B}_j^\varphi (z)
	\coloneqq
	\sum_{m \in \Z/N\Z} \varphi(m) 
	\widetilde{B}_j \left( z + \frac{m}{N} \right),
	\]
	Then, the following hold.
	\begin{enumerate}
		\item we have
		\[
		S_k^{\chi, \psi}(x)
		=
		S_{E_k^{\chi, \psi}}(x)
		=
		\left( \frac{\iu}{2} \right)^k \frac{1}{p^{k-1}}
		\sum_{\substack{
				1 \le n \le Np-1, \\
				p \nmid n
		}} 
		\psi(n)
		\cot_{\widehat{\chi}} \left( \frac{\pi nx}{N} \right)
		\cot^{(k-2)} \left( \frac{\pi n}{Np} \right),
		\]
		where we define Eisenstein series as
		\[
		E_k^{\chi, \psi} (\tau)
		=
		2 \sum_{n=1}^\infty \sigma_{k-1}^{\chi, \psi} (n) q^{n/N},
		\quad
		\sigma_{k-1}^{\chi, \psi} (n)
		\coloneqq
		\sum_{0 < d \mid n} \chi \left( \frac{n}{d} \right) \psi(d) d^{k-1}.
		\]
		and we denote
		\[
		\cot_{\varphi}^{} (z)
		\coloneqq
		\sum_{m \in \Z/N\Z} \varphi(m) 
		\cot \left( z + \frac{\pi m}{N} \right).
		\]
		
		\item For any $ \gamma = \smat{a & b \\ c & d} \in \Gamma(N) $ with $ cx+d \neq 0 $, we have
		\[
		(cx+d)^{k-2} S_k^{\chi, \psi} \left( \frac{ax+b}{cx+d} \right) - S_k^{\chi, \psi} (x)
		=
		R_{E_k^{\chi, \psi}, \gamma} (x) - \frac{a_0^{(x)}}{p^k} \frac{c}{cx+d},
		\]
		where $ R_{E_k^{\chi, \psi}, \gamma} (X) \in \bbC[X] $ is the regularized period polynomial associated with $ E_k^{\chi, \psi} $ and
		\[
		a_0^{(x)}
		=
		-\frac{N^{k-1}}{k}
		\sum_{l, m \in \Z/N\Z} \chi(-pl) \widehat{\psi} (rl)
		\bm{e} \left( -\frac{lm}{N} \right) \widetilde{B}_k \left( \frac{m}{N} \right).
		\]
	\end{enumerate}
\end{cor}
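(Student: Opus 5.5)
Part (2) is immediate from \zcref{thm:main_general_Dedekind_sum} once part (1) identifies $S_k^{\chi,\psi}$ with $S_{E_k^{\chi,\psi}}$ and $a_0^{(x)}$ is computed. So the work splits into three steps.

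\textbf{Step 1: evaluating $\widehat{L}_f(k-1;x)$ for $f=E_k^{\chi,\psi}$.} Since $a_n = 2\sigma_{k-1}^{\chi,\psi}(n)$, the Dirichlet series factors after writing $n = md$:
\[
\sum_{n\ge1}\frac{a_n}{n^s}\bm{e}\left(\frac{nx}{N}\right)
= 2\sum_{m,d\ge1}\chi(m)\psi(d)\,d^{k-1-s}m^{-s}\,\bm{e}\left(\frac{mdx}{N}\right).
\]
With $x=r/p$, the exponential depends only on $md \bmod Np$. I will therefore split $m$ and $d$ into residue classes modulo $Np$, which produces a finite sum of products of two Hurwitz zeta values. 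Alternatively, I can sum over $m$ first as a periodic Lerch series. At $s=k-1$ the $d$-sum becomes a Hurwitz zeta at $0$, which gives $B_1$. The $m$-sum becomes a periodic zeta function at $k-1$. By the functional equation (Hurwitz's formula), $\Gamma(k-1)(2\pi/N)^{-(k-1)}$ times the periodic zeta function is a multiple of $\widetilde{B}_{k-1}$ composed with the discrete Fourier transform. The parity hypothesis $\chi(-m)\psi(-n)=(-1)^k\chi(m)\psi(n)$ guarantees that the $\pm$ terms combine, and $\bm{e}(-s/4)$ absorbs the resulting factor $i^{k-1}$.

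Collecting terms should give the Bernoulli expression for $S_k^{\chi,\psi}$. The roles of $m$ and $d$ may need to be swapped relative to the displayed formula; I will match the indices at the end.

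\textbf{Step 2: the cotangent form.} I will expand $\widetilde{B}_{k-1}$ and $B_1$ on $\Z/Np\Z$ via their finite Fourier expansions. The key identity is
\[
\sum_{m \bmod M}\widetilde{B}_1\left(\frac{m}{M}\right)\bm{e}\left(\frac{mn}{M}\right) = \frac{i}{2}\cot\left(\frac{\pi n}{M}\right).
\]
Applying its $(k-2)$-fold derivative analogue turns $\widetilde{B}_{k-1}$ at points with denominator $Np$ into $\cot^{(k-2)}$. Terms with $p\mid n$ drop out because the $\chi$-twisted sum over a full period vanishes there, since $\chi(0)=0$. The remaining twist by $\chi$ then assembles into $\cot_{\widehat{\chi}}(\pi n x/N)$.

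\textbf{Step 3: the constant term $a_0^{(x)}$.} I will use the standard transformation law of $E_k^{\chi,\psi}$ under $\delta\in\SL_2(\Z)$, computed directly from the lattice-sum definition
\[
\sum_{(m,n)} \chi(m)\,\psi\text{-twisted terms}\cdot (m\tau+n)^{-k}.
\]
Substituting $\tau\mapsto\delta\tau$ permutes the residue data. The constant term of the result is a finite sum $\sum_{l,m}\chi(-pl)\widehat{\psi}(rl)\,\bm{e}(-lm/N)$ multiplied by a Hurwitz-zeta value at $1-k$, which equals $-\widetilde{B}_k(m/N)/k$. This matches the stated formula.

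\textbf{Main obstacle.} I expect the bookkeeping in Steps 1 and 2 to be the hardest part: tracking normalizations of the discrete Fourier transform (whether $\widehat{\psi}$ or $\widehat{\chi}$ appears, and the factor $N^{k-1}$), and handling the term $n \equiv 0 \pmod p$, where $\cot^{(k-2)}$ is singular. I will first verify the formula against the level-2 case treated in \zcref{thm:main_expression_sigma} to fix the constants. Finally, analytic continuation to $s=k-1<k$ is justified because both sides are continuous in $s$ on the Hurwitz zeta side.
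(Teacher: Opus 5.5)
\textbf{The index-matching step fails.} Your overall route is the paper's: write $a_n=2\sigma_{k-1}^{\chi,\psi}(n)$ as a double sum over $n=md$, reduce modulo $Np$ to Hurwitz zeta products, use $\zeta(0;\alpha)=-B_1(\alpha)$ and the parity hypothesis, pass to cotangents, and get (2) from \zcref{thm:main_general_Dedekind_sum}. The gap is the step ``I will match the indices at the end'', which cannot be carried out.

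Your Step~1 correctly puts $d^{k-1-s}$ on the $\psi$-variable. So at $s=k-1$ it is $\psi$ that receives $B_1$ and $\chi$ that receives $\zeta(k-1;\cdot)$. Carried through, your computation gives
\[
S_{E_k^{\chi,\psi}}(x)=-\frac{N^{k-1}}{k-1}\sum_{1\le m\le Np-1}\psi(m)\,B_1\left(\frac{m}{Np}\right)\widetilde B_{k-1}^{\widehat\chi}\left(\frac{mx}{N}\right).
\]
This is the displayed formula with $\chi$ and $\psi$ interchanged, which is a different quantity, not a relabeling. With $L(s,\phi)\coloneqq\sum_{n\ge1}\phi(n)n^{-s}$ one has $L_{E_k^{\chi,\psi}}(s;0)=2N^sL(s,\chi)L(s-k+1,\psi)$. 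Hence $S_{E_k^{\chi,\psi}}(0)$ is a multiple of $L(k-1,\chi)L(0,\psi)$, while the displayed $S_k^{\chi,\psi}(0)$ is a multiple of $L(0,\chi)L(k-1,\psi)$.

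Take $N=k=3$, $\chi(\pm1)=\pm1$, $\psi(\pm1)=1$, $\chi(0)=\psi(0)=0$, which satisfy every hypothesis. Then $L(0,\psi)=0$ forces $S_{E_3^{\chi,\psi}}(0)=0$, but the displayed Bernoulli sum equals $2/9$. The same mismatch appears at $x=1/2$, so it is not a $p=1$ artefact.

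The paper reaches the displayed form only because its Hurwitz factorization in \zcref{prop:Eisenstein_period_map_L-func} attaches $\zeta(s-k+1;m/Np)$ to the $\chi$-variable. That contradicts $\sigma_{k-1}^{\chi,\psi}(n)=\sum_{d\mid n}\chi(n/d)\psi(d)d^{k-1}$. What you can actually prove is (1) with $\chi\leftrightarrow\psi$. This is harmless for \zcref{sec:Dedekind_sum_level_2}, where $\chi=\psi$. For the same reason, your planned calibration against the level-$2$ case cannot detect the problem.

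\textbf{The $p\mid n$ terms do not drop out.} Your justification in Step~2 is incorrect. For $p\mid n$ the number $nx$ is an integer, and $\chi(0)=0$ only disposes of the zero-frequency contribution. In the displayed roles, the $B_1$-sum over $m$ then equals $\frac{1}{2i}\sum_{l\not\equiv -nx \bmod N}\widehat\chi(l)\cot(\pi(l+nx)/N)$. This is generally nonzero, and it multiplies the finite value $\cot^{(k-2)}(\pi n/Np)$.

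For $p=1$ the restricted sum is empty, so the displayed cotangent formula would give $S_{E_k^{\chi,\psi}}(x)=0$ for every $x\in\Z$. This fails even in the symmetric case $\chi=\psi$ equal to the odd character mod $3$ with $k=4$. There $S_{E_4^{\chi,\chi}}(0)=-2i/(9\sqrt3)$, a nonzero multiple of $L(3,\chi)L(0,\chi)$; the Bernoulli form reproduces this value. The $p\mid n$ terms must be kept, with the singular term of $\cot_{\widehat\chi}$ omitted.

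In level $2$ the only such term is $n=p$. It vanishes because $\cot^{(g)}(\pi/2)=0$ for even $g$, or $\psi(p)=0$ when $p$ is even; that is why the paper's application survives.

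\textbf{Sign of the constant term.} Done carefully, Step~3 produces the prefactor $(-N)^{k-1}/k$ of \zcref{prop:Eisenstein_period_map}. This agrees with the displayed $-N^{k-1}/k$ only for even $k$.
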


\zcref{thm:main_general_Dedekind_sum,cor:main_gen'd_Ded_sum_Eisenstein} generalize the result of Fukuhara~\cite{Fukuhara:gen_Dedekind_symb} in the case $ N=1 $ and the results of Stucker--Vennos--Young~\cite{Stucker-Vennos-Young} and Tranbarger~\cite{Tranbarger} in the case $ \Gamma = \Gamma_0(N)$.

%

This paper will be organized as follows. 
In \zcref{sec:gen'd_Dedekind_sum}, we give the first proof of \zcref{thm:main_general_Dedekind_sum}.
In \zcref{sec:Eichler_int}, we study Eichler integrals and give the second proof of \zcref{thm:main_general_Dedekind_sum}.
In \zcref{sec:asymp}, we consider asymptotic expansion formulas obtained by Mellin transform and give the third proof of \zcref{thm:main_general_Dedekind_sum}.
In \zcref{sec:Eisenstein}, we study Eisenstein series for $ \Gamma(N) $ and prove \zcref{cor:main_gen'd_Ded_sum_Eisenstein}. 
In \zcref{sec:Dedekind_sum_level_2}, we apply \zcref{cor:main_gen'd_Ded_sum_Eisenstein} for $ S_g^{\odd} (x) $ and prove their quantum modularity.
In \zcref{sec:proof}, we prove \zcref{thm:main,thm:main_expression_sigma}.


\subsection*{Notations}


Throughout this article, we use the following notations:

\begin{itemize}
	\item For a complex number $ z $, we denote $ \bm{e}(z) \coloneqq e^{2\pi\iu z} $.
	\item We denote by $ \bbH \coloneqq \{ \tau \in \bbC \mid \Im(\tau) > 0 \} $ the complex upper half plane and write $ \tau $ for its variable.
	\item We denote by $ q $ a complex variable with $ \abs{q} < 1 $.
	\item For $ f \colon \bbH \to \bbC $ or $ f \colon \Q \to \bbC $, $ k \in \Z_{\ge 0} $, and $ \gamma = \smat{a & b \\ c & d} \in \SL_2(\Z) $, we denote
	$ (\eval{f}_k \gamma) (\tau) \coloneqq (c\tau + d)^{-k} f(\gamma(\tau)) $. 
	\item For a set $ A $, we denote by $ \bm{1}_{A}^{} (x) $ the characteristic function of $ A $.
\end{itemize}


\section{Reciprocity for generalized Dedekind sums associated to modular forms} \label{sec:gen'd_Dedekind_sum}


In this section, we define generalized Dedekind sums associated to modular forms and prove their reciprocity.
We follow the approach presented by Fukuhara~\cite{Fukuhara:gen_Dedekind_symb}.

Throughout the paper, we fix a modular form $ f(\tau) $ for a congruence subgroup $ \Gamma \subset \SL_2(\Z) $ of weight $ k \ge 2 $.
We also fix a rational number $ x = r/p \in \Q $ with coprime integers $ r \in \Z $ and $ p \in \Z_{>0} $.
Let $ N $ be a positive integer such that $ \Gamma(N) \subset \Gamma $.
We write the Fourier expansion $ f(\tau) = \sum_{n=0}^\infty a_n q^{n/N} $.


\subsection{Definition and main result} \label{subsec:gen'd_Dedekind_sum:def_&_main_thm}


\begin{dfn} \label{dfn:gen'd_Ded_sum}
	\begin{enumerate}
		\item We define the \emph{twisted $ L $-function associated with $ f $} and its completed version as
		\begin{align}
			L_f(s; x) &\coloneqq
			N^s
			\sum_{n=1}^\infty \frac{a_n}{n^s} \bm{e} \left( \frac{nx}{N} \right),
			\\
			\widehat{L}_f(s; x) &\coloneqq
			-(2\pi)^{-s} \bm{e} \left( -\frac{s}{4} \right) \Gamma(s) L_f(s; x)
		\end{align}
		respectively for $ s \in \bbC $ with $ \Re(s) > k $, where we denote $ \bm{e}(z) \coloneqq e^{2\pi\iu z} $ for a complex number $ z $.
		
		\item We define a \emph{generalized Dedekind sum associated with $ f $} as 
		\[
		S_f (x) \coloneqq
		\widehat{L}_f(k-1; x).
		\]
		
		\item For $ \gamma = \smat{a & b \\ c & d} \in \Gamma $, we define the \emph{regularized period polynomial associated with $ f $} as 
		\begin{align}
			R_{f, \gamma} (X) &\coloneqq
			-\sum_{j=0}^{k-2} \binom{k-2}{j} \widehat{L}_f \left( j+1; -\frac{d}{c} \right) \left( \frac{cX+d}{c} \right)^{k-j-2}
			\\
			&\phant
			- \frac{a_0}{k-1} \left( \left( \frac{cX+d}{c} \right)^{k-1} + \frac{1}{(-c)^{k-1} (cX+d)} \right)
			\quad \in \frac{1}{cX+d} \bbC[X].
		\end{align}
	\end{enumerate}
\end{dfn}

The $ L $-function $ L_f(s; x) $ converges for $ \Re(s) > k $ since $ a_n = O(n^{k-1}) $.
Later in \zcref{lem:twisted_L-func}, we will prove that the $ L $-function $ \widehat{L}_f(s; x) $ extends meromorphically to $ \bbC $ and its possible poles are $ s = 0, k $.
Thus, the generalized Dedekind sum $ S_f (x) $ and the regularized period polynomial $ R_{f, \gamma} (X) $ are well-defined.

We define the Fourier constant $ a_{f, 0}^{(x)} = a_0^{(x)} \in \bbC $ by the expansion $ (\eval{f}_k \delta)(\tau) = a_0^{(x)} + O(q^{1/N_x}) $
for some matrix $ \delta = \smat{r & r' \\ p & p'} \in \SL_2(\Z) $ and some positive integer $ N_x $.

Our main result in this section is the following formula.

\begin{thm}[Reciprocity for generalized Dedekind sums] \label{thm:reciprocity_gen'd_Ded_sum}
	For any $ \gamma = \smat{a & b \\ c & d} \in \Gamma $ with $ cx+d \neq 0 $, we have
	\[
	(\eval{S_f}_{2-k} \gamma) (x) - S_f (x)
	=
	R_{f, \gamma} (x) - \frac{a_0^{(x)}}{p^k} \frac{c}{cx+d}.
	\]
\end{thm}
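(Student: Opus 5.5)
We will prove the reciprocity through an integral representation of the completed twisted $L$-function, in the spirit of Fukuhara's argument. The first step is to write the $L$-function as a Mellin transform along the vertical ray above $x$. For $\Re(s) > k$, with $t>0$, we have
\[
\int_0^\infty \bigl(f(x+\iu t) - a_0\bigr) t^{s-1}\, dt
=
\Gamma(s) (2\pi)^{-s} N^s \sum_{n \ge 1} \frac{a_n}{n^s} \bm{e}\left( \frac{nx}{N} \right),
\]
so that $\widehat{L}_f(s;x) = -\bm{e}(-s/4)\int_0^\infty (f(x+\iu t)-a_0) t^{s-1}\,dt$. Substituting $\tau = x + \iu t$ turns this into $-\int_x^{\iu\infty}(f(\tau)-a_0)(\tau-x)^{s-1}\,d\tau$. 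Near $t\to 0$ we use $f = (\eval{f}_k\delta)\circ\delta^{-1}$ times the automorphy factor, where $\delta\infty = x$. Subtracting the constant term $a_0^{(x)}$ of $\eval{f}_k\delta$ then gives the meromorphic continuation to $\bbC$ with poles only at $s=0,k$, which is the content of \zcref{lem:twisted_L-func}. This yields a regularized integral for $S_f(x)=\widehat{L}_f(k-1;x)$: a convergent integral of $f-a_0$ near $\iu\infty$ and of $f$ minus the cusp-$x$ constant near $x$, plus explicit boundary terms in $a_0$ and $a_0^{(x)}/p^k$.

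The second step treats $S_f$ as a regularized Eichler-type integral $\int_x^{\iu\infty} f(\tau)(\tau-x)^{k-2}\,d\tau$ and applies $\gamma$. With $\tau=\gamma\tau'$ and the relations $f(\gamma\tau')=(c\tau'+d)^k f(\tau')$, $\gamma\tau'-\gamma x = (\tau'-x)/((c\tau'+d)(cx+d))$ and $d(\gamma\tau')=d\tau'/(c\tau'+d)^2$, we obtain
\[
(cx+d)^{k-2}\int_{\gamma x}^{\iu\infty} f(\tau)(\tau-\gamma x)^{k-2}\,d\tau = \int_x^{\gamma^{-1}\infty} f(\tau')(\tau'-x)^{k-2}\,d\tau'
\]
formally, with $\gamma^{-1}\infty=-d/c$. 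Hence the difference of the two sides is $\int_{\iu\infty}^{-d/c} f(\tau)(\tau-x)^{k-2}\,d\tau$. We expand $(\tau - x)^{k-2} = \sum_j \binom{k-2}{j}(\tau+d/c)^{j}(-d/c - x)^{k-2-j}$, which shows that this integral is a combination of $\widehat{L}_f(j+1;-d/c)$. With the prefactor bookkeeping $\bm{e}(-s/4)$, this is exactly the first sum in $R_{f,\gamma}(x)$.

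The main obstacle is the regularization: the constant terms at the cusps $\iu\infty$, $x$, $\gamma x$ and $-d/c$. We will make everything rigorous by truncating each integral at height $T$ (respectively at distance $\varepsilon$ from the cusp), subtracting the constant terms explicitly, applying the change of variables to the truncated contours, and tracking the images of the truncation points under $\gamma$. The boundary terms from $a_0$ at $\iu\infty$ and at $-d/c$ (where $\eval{f}_k\gamma^{-1}=f$, since $\gamma\in\Gamma$) should produce $-\frac{a_0}{k-1}\bigl((\frac{cx+d}{c})^{k-1}+\frac{1}{(-c)^{k-1}(cx+d)}\bigr)$. The $a_0^{(x)}$ boundary terms at $x$ and $\gamma x$ do not cancel. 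The cusp $\gamma x$ uses the matrix $\gamma\delta$ and has the same constant $a_0^{(x)}$, but its width scaling differs by the factor $(cx+d)$. This mismatch leaves the residual $-\frac{a_0^{(x)}}{p^k}\frac{c}{cx+d}$. Finally, we let $T\to\infty$ and $\varepsilon\to 0$, checking that all divergent pieces cancel. This cancellation of divergent pieces is the delicate computation we expect to dominate the proof.
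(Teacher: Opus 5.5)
Your outline is essentially the paper's first proof. It uses the Mellin representation of $\widehat{L}_f$, a regularized formula for $S_f$ with explicit $a_0$ and $a_0^{(x)}/p^k$ boundary terms, the substitution $\tau=\gamma\tau'$, and the binomial expansion about $-d/c$ to produce the values $\widehat{L}_f(j+1;-d/c)$. The real difference is that the paper replaces your truncations and limits by splitting every integral at an arbitrary base point $z_0\in\bbH$; the regularized expressions do not depend on $z_0$, and this includes a base-point formula for $R_{f,\gamma}$ itself, which is where the $a_0$-terms are checked. No limits are then needed. After the substitution, the subtracted singular term at $\gamma x$ becomes exactly $a_0^{(x)}/(p^k(x-z)^k)$, and the integrals from $x$ to $z_0$ cancel identically. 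The residual therefore comes from the factor $cz_0+d$ in $\gamma x-\gamma z_0=\frac{x-z_0}{(cx+d)(cz_0+d)}$, not from a width mismatch: it equals $\frac{a_0^{(x)}}{p^k}\frac{1}{x-z_0}\bigl(\frac{cz_0+d}{cx+d}-1\bigr)=-\frac{a_0^{(x)}}{p^k}\frac{c}{cx+d}$.

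One slip to fix: the substitution $\tau=x+\iu t$ gives $\widehat{L}_f(s;x)=\int_x^{\iu\infty}(f(\tau)-a_0)(x-\tau)^{s-1}\,d\tau$, not $-\int_x^{\iu\infty}(f(\tau)-a_0)(\tau-x)^{s-1}\,d\tau$. You lost a factor $\bm{e}(-s/2)$, which is a sign at odd $s$, for example at $s=k-1$ when $k$ is even. With the correct form, expanding $(x-\tau)^{k-2}=\sum_j\binom{k-2}{j}\bigl(\frac{cx+d}{c}\bigr)^{k-2-j}\bigl(-\frac{d}{c}-\tau\bigr)^{j}$ gives the first sum of $R_{f,\gamma}(x)$ with no stray signs.
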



\subsection{Proof} \label{subsec:gen'd_Dedekind_sum:proof}


To begin with, we study the completed twisted $ L $-function.

\begin{lem} \label{lem:twisted_L-func}
	On the region $ \Re(s) > k $, we have
	\begin{align}
		\widehat{L}_f (s; x)
		&=
		-\bm{e} \left( -\frac{s}{4} \right) 
		\int_{0}^{\infty} \left( f(x + \iu t) - a_0 \right) t^{s-1} dt
		\\
		&=
		\int_{x}^{\iu \infty} \left( f(\tau) - a_0 \right) \left( x - \tau \right)^{s-1} d\tau
		\\
		&=
		\int_{z_0}^{\iu \infty} \left( f(z) - a_0 \right) (x-z)^{s-1} dz
		+ \int_{x}^{z_0} \left( f(z) - \frac{a_0^{(x)}}{p^k (x-z)^k} \right) (x-z)^{s-1} dz
		\\
		&\phant
		+ a_0 \frac{(x - z_0)^{s}}{s} - a_0^{(x)} \frac{(x - z_0)^{s-k}}{p^k (s-k)}
	\end{align}
	for arbitrary $ z_0 \in \bbH $.
	Two integral terms in the last expression converge for any $ s \in \bbC $.
	Thus, $ \widehat{L}_f (s; x) $ extends meromorphically to $ \bbC $ and its possible poles are $ s = 0, k $. 
\end{lem}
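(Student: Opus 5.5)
The plan is to start from the Mellin transform of the Fourier expansion. For $\Re(s) > k$, I would write
\[
f(x+\iu t) - a_0 = \sum_{n \ge 1} a_n \bm{e}(nx/N) e^{-2\pi n t/N}
\]
and integrate term by term against $t^{s-1}$. The interchange of sum and integral is justified by absolute convergence, since $a_n = O(n^{k-1})$ and $\int_0^\infty e^{-2\pi n t/N} t^{\Re s -1}\,dt = \Gamma(\Re s)(N/2\pi n)^{\Re s}$. This produces $(2\pi)^{-s}\Gamma(s)L_f(s;x)$, and multiplying by $-\bm{e}(-s/4)$ gives the first expression. For the second expression I substitute $\tau = x+\iu t$, so $d\tau = \iu\,dt$ and $x-\tau = -\iu t = e^{-\pi\iu/2} t$ with the principal branch. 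Then $(x-\tau)^{s-1}\,d\tau = \bm{e}(-(s-1)/4)\,\iu\, t^{s-1}\,dt = -\bm{e}(-s/4)\, t^{s-1}\,dt$, because $\iu\,\bm{e}(1/4) = -1$. This matches the first line.

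For the third expression, I deform the vertical path from $x$ to $\iu\infty$ into a path $x \to z_0 \to \iu\infty$ inside $\bbH$. Cauchy's theorem applies because the integrand is holomorphic on $\bbH$, where $x - z$ has negative imaginary part so the branch is fine. Justifying the deformation requires decay at both ends: at $\iu\infty$ this comes from the exponential decay of $f - a_0$. At $x$, I use $\delta^{-1}$ with $\delta(\infty) = x$ to get $f(z) = (\eval{f}_k\delta)(\delta^{-1}z)\,(p\,\delta^{-1}z \cdot{}\text{stuff})$. Concretely, $f(z) = (pz'-r)^{k}$-type factors give $f(z) = a_0^{(x)}/(p^k(x-z)^k)\cdot(\pm1) + O\bigl(\abs{x-z}^{-k}e^{-c/\abs{x-z}}\bigr)$ as $z \to x$ within a cone (Stolz region), since $\delta^{-1}z \to \iu\infty$ there. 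I would fix the sign so that it matches the stated term $a_0^{(x)}/(p^k(x-z)^k)$, using $-pz + r = p(x-z)$ and $(\eval{f}_k\delta)(w)(pw'...)$. The key identity to check is $(p\,\delta^{-1}(z) \ldots)$; specifically, with $w = \delta^{-1}z$ one has $z = \delta w$ and $pw + p' = 1/(r - pz)\cdot(\ldots)$. So $f(z) = (pw+p')^k(\eval{f}_k\delta)(w) = (p(x-z))^{-k}(\eval{f}_k\delta)(w)$ up to an even power of $-1$.

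Next I split the integral on $[x, z_0]$ by adding and subtracting $a_0$ and $a_0^{(x)}/(p^k(x-z)^k)$. The subtracted pieces are elementary:
\[
\int_x^{z_0}(x-z)^{s-1}\,dz = \frac{(x-z_0)^s}{s} \quad (\Re s>0), \qquad \int_x^{z_0}(x-z)^{s-k-1}\,dz = \frac{(x-z_0)^{s-k}}{s-k} \quad (\Re s>k).
\]
Combining with $-\int_x^{z_0} a_0 (x-z)^{s-1}\,dz$ moved onto the $[z_0,\iu\infty)$ piece gives the displayed formula. The $a_0$ term appears as $+a_0(x-z_0)^s/s$ after reorganizing $\int_x^{z_0}(f-a_0) = \int_x^{z_0}(f - a_0^{(x)}/\ldots) + \int(a_0^{(x)}/\ldots) - a_0\int(\ldots)$. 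I need to track the signs carefully so they match the stated ones. This bookkeeping is routine.

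Finally, the analytic continuation step. The integral over $[z_0,\iu\infty)$ converges for all $s$ by exponential decay and defines an entire function. The integral over $[x,z_0]$ of $f - a_0^{(x)}/(p^k(x-z)^k)$ has an integrand bounded by $\abs{x-z}^{\Re s - 1 - k}e^{-c/\abs{x-z}}$, which is integrable near $x$ for every $s$ along a straight segment lying in a Stolz cone. So it is also entire, and by uniqueness the right side gives the continuation, with poles only from $1/s$ and $1/(s-k)$. The main obstacle is the uniform exponential bound near the cusp $x$. I would obtain it by choosing $z_0 = x + \iu t_0$, so the segment is vertical. Then $\Im\delta^{-1}(x+\iu t) = 1/(p^2 t)$, and $(\eval{f}_k\delta)(w) - a_0^{(x)} = O(e^{-2\pi\Im w/N_x})$ follows from the $q^{1/N_x}$-expansion. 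For general $z_0$ I would then invoke Cauchy's theorem to move the endpoint.
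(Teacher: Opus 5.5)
Your proposal follows essentially the same route as the paper's proof: termwise Mellin transform, the substitution $\tau = x + i t$, and splitting the path at $z_0$ while subtracting the two cusp principal parts. The bound near $x$ comes from $p\,\delta^{-1}(z)+p' = 1/(p(x-z))$, so $f(z) = (p(x-z))^{-k}(f|_k\delta)(\delta^{-1}z)$ holds exactly, with no sign to fix. Your Cauchy-theorem argument for moving $z_0$ plays the role of the paper's remark that the $z_0$-derivative of the right-hand side vanishes.

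The one slip is the sign of your two elementary integrals. Since $\frac{d}{dz}(x-z)^{s} = -s(x-z)^{s-1}$, one has $\int_x^{z_0}(x-z)^{s-1}\,dz = -(x-z_0)^{s}/s$ and $\int_x^{z_0}(x-z)^{s-k-1}\,dz = -(x-z_0)^{s-k}/(s-k)$. With these signs, $\int_x^{z_0}\bigl(a_0^{(x)}p^{-k}(x-z)^{-k} - a_0\bigr)(x-z)^{s-1}\,dz$ equals exactly the stated $a_0(x-z_0)^{s}/s - a_0^{(x)}(x-z_0)^{s-k}/(p^k(s-k))$. Your displayed versions would give the opposite signs.
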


\begin{proof}
	The first equality follows from
	\[
	\int_{0}^{\infty} e^{-2\pi nt/N} t^{s-1} dt
	=
	\left( \frac{N}{2\pi n} \right)^s \Gamma(s).
	\]
	We have
	\begin{align}
		\widehat{L}_f (s; x)
		&=
		-\bm{e} \left( -\frac{s}{4} \right) 
		\int_{0}^{\iu \infty} \left( f(x + \tau) - a_0 \right) \left( \frac{\tau}{\iu} \right)^{s-1} \frac{d\tau}{\iu}
		\\
		&=
		\int_{0}^{\iu \infty} \left( f(x + \tau) - a_0 \right) \left( -\tau \right)^{s-1} d\tau
		\\
		&=
		\int_{x}^{\iu \infty} \left( f(\tau) - a_0 \right) \left( x - \tau \right)^{s-1} d\tau.
	\end{align}
	Let $ \delta = \smat{r & r' \\ p & p'} \in \SL_2(\Z) $ and $ N_x \in \Z_{>0} $ be as defined before \zcref{thm:reciprocity_gen'd_Ded_sum}.
	Since
	\begin{align}
		(pz + p') f(\delta(z)) &= a_0^{(x)} + O \left( \exp \left( \frac{-2\pi \Im(z)}{N_x} \right) \right),
		\\
		p \delta^{-1} (z) + p' &= \frac{1}{-pz + r},
		\\
		\Im \left( \delta^{-1} (z) \right) 
		&= \frac{\Im(z-x)}{p^2 \abs{z-x}^2}
		= \frac{1}{p^2 \abs{z-x}},
	\end{align}
	we have
	\[
	f(z) = (x-z)^{-k}
	\left( \frac{a_0^{(x)}}{p^k} + O \left( \exp \left( \frac{-2\pi}{N_x p^2 \abs{z-x}} \right) \right) \right).
	\]
	Thus, the second term in the last equation converges for any $ s \in \bbC $.
	The last equation is independent of $ z_0 $ since its $ z_0 $-derivative vanishes.
	Thus, we have
	\begin{align}
		\widehat{L}_f (s; x)
		&=
		\int_{x}^{\iu \infty} \left( f(z) - a_0 \right) (x-z)^{s-1} dz
		+ \int_{x}^{z_0} \left( f(z) - \frac{a_0^{(x)}}{p^k (x-z)^k} \right) (x-z)^{s-1} dz
		\\
		&\phant
		+ \int_{x}^{z_0} \left( -a_0 + \frac{a_0^{(x)}}{p^k (x-z)^k} \right) (x-z)^{s-1} dz.
	\end{align}
	This equals the last equation.
\end{proof}

\begin{rem} \label{rem:Ded_sum_rep}
	By \zcref{lem:twisted_L-func}, we have an expression of generalized Dedekind sum
	\begin{align}
		S_f (x)
		&=
		\int_{z_0}^{\iu \infty} \left( f(z) - a_0 \right) (x-z)^{k-2} dz
		+ \int_{x}^{z_0} \left( f(z) - \frac{a_0^{(x)}}{p^k (x-z)^k} \right) (x-z)^{k-2} dz
		\\
		&\phant
		+ \frac{a_0}{k-1} (x - z_0)^{k-1} + \frac{a_0^{(x)}}{p^k} \frac{1}{x - z_0}
	\end{align}
	for arbitrary $ z_0 \in \bbH $.
\end{rem}

\begin{lem} \label{lem:rep_period_poly}
	For any $ \gamma = \smat{a & b \\ c & d} \in \Gamma $, we have
	\begin{align}
		R_{f, \gamma} (X) &=
		\int_{\iu \infty}^{z_0} \left( f(z) - a_0 \right) (X-z)^{k-2} dz
		+ \int_{z_0}^{\gamma^{-1} (\iu \infty)} \left( f(z) - \frac{a_0}{(cz+d)^k} \right) (X-z)^{k-2} dz
		\\
		&\phant
		+ \frac{a_0}{k-1} (X - z_0)^{k-1} \left( \frac{1}{(cX+d)(cz_0 + d)^{k-1}} - 1 \right)
	\end{align}
	for arbitrary $ z_0 \in \bbH $.
\end{lem}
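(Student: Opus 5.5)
Our plan is to start from the definition of $R_{f,\gamma}(X)$ and substitute, for each $j$, the integral representation of $\widehat{L}_f(j+1; -d/c)$ from \zcref{lem:twisted_L-func}, evaluated at the rational point $x' \coloneqq -d/c = \gamma^{-1}(\iu\infty)$. The cusp $x'$ is the image of $\iu\infty$ under $\gamma^{-1} \in \Gamma$. We may therefore take $\delta = \gamma^{-1}$ in the definition of the Fourier constant at $x'$. Since $f|_k\gamma^{-1} = f$, this gives $a_0^{(x')} = a_0$, and the relevant denominator is $-c$ up to sign, so $p^k$ is replaced by $c^k$ (as $k$ is even, or by the correct sign convention). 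We expect the sign bookkeeping here, i.e.\ matching $a_0^{(x')}/p^k (x'-z)^{-k}$ with $a_0 (cz+d)^{-k}$, to be the main point requiring care. Indeed $x'-z = -(cz+d)/c$, so $(x'-z)^{-k}c^{-k} \cdot (-1)^k = (cz+d)^{-k}$ after the appropriate normalization.

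Concretely, for $s = j+1$ with $0 \le j \le k-2$ (away from the poles $0$ and $k$), the last expression of \zcref{lem:twisted_L-func} gives
\begin{align}
\widehat{L}_f(j+1; x') &= \int_{z_0}^{\iu\infty} (f(z)-a_0)(x'-z)^{j}\,dz + \int_{x'}^{z_0}\left(f(z) - \frac{a_0}{(cz+d)^k}\right)(x'-z)^j\,dz \\
&\phant + a_0\frac{(x'-z_0)^{j+1}}{j+1} - a_0\frac{(x'-z_0)^{j+1-k}}{(\pm c)^k\,(j+1-k)}.
\end{align}
We multiply by $-\binom{k-2}{j}(X-x')^{k-2-j}$, noting that $(cX+d)/c = X - x'$, and sum over $j$. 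The binomial theorem gives $\sum_j \binom{k-2}{j}(x'-z)^j (X-x')^{k-2-j} = (X-z)^{k-2}$. The two integral terms therefore collapse to $\int_{\iu\infty}^{z_0}(f-a_0)(X-z)^{k-2}dz + \int_{z_0}^{x'}(f - a_0(cz+d)^{-k})(X-z)^{k-2}dz$, which are exactly the integrals in the claim.

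It remains to check that the boundary terms, together with the explicit $a_0$-terms in $R_{f,\gamma}$, produce $\frac{a_0}{k-1}(X-z_0)^{k-1}\bigl((cX+d)^{-1}(cz_0+d)^{1-k} - 1\bigr)$. We would do this not by summing binomials with $1/(j+1)$ and $1/(j+1-k)$ directly. Instead we write $\frac{(x'-z_0)^{j+1}}{j+1} = \int_{z_0}^{x'}(x'-z)^j\,dz$, so that summing gives $\int_{z_0}^{x'}(X-z)^{k-2}dz = \frac{(X-z_0)^{k-1} - (X-x')^{k-1}}{k-1}$. Similarly, the other family equals $\int (x'-z)^{j-k}dz$ from $z_0$ to $\infty$ along a suitable path, and summing gives $\int_{z_0}^{\infty}(X-z)^{k-2}(cz+d)^{-k}dz$ up to constants. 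This integral is elementary: it has primitive $\frac{1}{(k-1)(cX+d)}\bigl(\frac{X-z}{cz+d}\bigr)^{k-1}$ up to sign. It yields the $(X-z_0)^{k-1}(cX+d)^{-1}(cz_0+d)^{1-k}$ term, plus a limit at $z\to\infty$ equal to $\pm\frac{1}{(k-1)(cX+d)(-c)^{k-1}}$. These two leftover pieces, $-(X-x')^{k-1}/(k-1)$ and the constant$/(cX+d)$, cancel exactly against the explicit correction $-\frac{a_0}{k-1}\bigl((X-x')^{k-1} + \frac{1}{(-c)^{k-1}(cX+d)}\bigr)$ in the definition of $R_{f,\gamma}$. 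This final sign and constant matching is where we expect errors to be most likely.
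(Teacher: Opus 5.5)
Your route is the paper's own: insert the last expression of Lemma~\ref{lem:twisted_L-func} at $x'=-d/c$, collapse the integrals with the binomial theorem, then sum the boundary terms. The paper does the last step with $\binom{k-2}{j}\frac{1}{j+1}=\frac{1}{k-1}\binom{k-1}{j+1}$, $\binom{k-2}{j}\frac{1}{k-1-j}=\frac{1}{k-1}\binom{k-1}{j}$ and the binomial theorem. Your device of writing each boundary term as an elementary integral and summing under the integral sign is equivalent and a bit cleaner. One correction to the set-up: $a_0^{(x')}=a_0$ holds only when $c<0$. In general the denominator of $x'$ is $|c|$ and $a_0^{(x')}=(-\operatorname{sgn} c)^k a_0$. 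Then $a_0^{(x')}/|c|^k=a_0/(-c)^k$ and $\frac{a_0^{(x')}}{|c|^k(x'-z)^k}=\frac{a_0}{(cz+d)^k}$ for every $k$, with no parity assumption needed.

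The real problem is the step you flagged: the $1/(cX+d)$ terms do not cancel. With definite signs, your second family of boundary terms contributes
\[
a_0\int_{z_0}^{i\infty}\frac{(X-z)^{k-2}}{(cz+d)^k}\,dz
=\frac{a_0}{k-1}\left(\frac{(X-z_0)^{k-1}}{(cX+d)(cz_0+d)^{k-1}}-\frac{1}{(-c)^{k-1}(cX+d)}\right)
\]
to $R_{f,\gamma}(X)$. The leftover $-\frac{a_0}{(k-1)(-c)^{k-1}(cX+d)}$ has the same sign as the explicit term $-\frac{a_0}{k-1}\cdot\frac{1}{(-c)^{k-1}(cX+d)}$ in the definition, so the two add; only the $(X-x')^{k-1}$ terms cancel. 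What your computation actually proves is that the right-hand side of the lemma equals $R_{f,\gamma}(X)+\frac{2a_0}{(k-1)(-c)^{k-1}(cX+d)}$.

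This is a defect of the statement, not of your method. The paper's final equality drops the same term, because $-\frac{1}{(cz_0+d)^k}\frac{x'-z_0}{X-x'}(x'-z_0)^{k-1}=+\frac{1}{(-c)^{k-1}(cX+d)}$. Here is a $z_0$-free check. The two integrals in the lemma are polynomials in $X$, so its right-hand side has polar part $\frac{a_0}{k-1}\bigl(\frac{x'-z_0}{cz_0+d}\bigr)^{k-1}\frac{1}{cX+d}=+\frac{a_0}{(k-1)(-c)^{k-1}(cX+d)}$ at $X=-d/c$. This is the correct polar part of $\mathcal{E}_f|_{2-k}\gamma-\mathcal{E}_f$: by Proposition~\ref{prop:Eichler_int_Ded_sum}, $\mathcal{E}_f$ has polar part $-\frac{a_0}{(k-1)(-c)^{k-1}(cX+d)}$ there, while $\mathcal{E}_f|_{2-k}\gamma$ tends to $0$. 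By contrast, $R_{f,\gamma}$ as defined has the opposite polar part.

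So the lemma as printed holds only when $a_0=0$. That case covers $E_{g+2}^{\mathrm{odd}}$ and $E_k^{\chi,\psi}$ with $\chi(0)=0$, so the main results are unaffected. In general, the sign in front of $\frac{1}{(-c)^{k-1}(cX+d)}$ in the definition of $R_{f,\gamma}$ must be reversed, and the same correction carries over to Theorem~\ref{thm:reciprocity_gen'd_Ded_sum}. With that change, and the signs fixed as above, your argument goes through verbatim.
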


\begin{proof}
	We remark that the right hand side is independent of $ z_0 $ since 
	\[
	\frac{d}{d z_0} \left( \frac{X - z_0}{cz_0 + d} \right)^{k-1}
	=
	-(k-1) (cX+d) \frac{(X - z_0)^{k-2}}{(cz_0 + d)^k}
	\]
	and hence its $ z_0 $-derivative vanishes.
	Let $ x' \coloneqq \gamma^{-1} (\iu \infty) = -d/c $.
	Since $ a_0^{(x')} = (-\sgn(c))^k a_0 $, by \zcref{lem:twisted_L-func} we have
	\begin{align}
		&\phant
		\int_{\iu \infty}^{z_0} \left( f(z) - a_0 \right) (X-z)^{k-2} dz
		+ \int_{z_0}^{\gamma^{-1} (\iu \infty)} \left( f(z) - \frac{a_0}{(cz+d)^k} \right) (X-z)^{k-2} dz
		\\
		&=
		\sum_{j=0}^{k-2} \binom{k-2}{j} \left( X - x' \right)^{k-j-2}
		\left(
			\int_{\iu \infty}^{z_0} \left( f(z) - a_0 \right) (x'-z)^j dz
			+ \int_{z_0}^{\gamma^{-1} (\iu \infty)} \left( f(z) - \frac{a_0^{x'}}{\abs{c}^k (x' - z)^k} \right) (x'-z)^j dz
		\right)
		\\
		&=
		\sum_{j=0}^{k-2} \binom{k-2}{j} \left( X - x' \right)^{k-j-2}
		\left(
			-\widehat{L}_f (j+1; x')
			+ a_0 (x' - z_0)^{j+1} \left( \frac{1}{j+1} - \frac{(z_0 - x')^{-k}}{c^k (j+1-k)} \right)
		\right)
		\\
		&=
		R_{f, \gamma} (X) 
		+ \frac{a_0}{k-1} \left( \left( \frac{cX+d}{c} \right)^{k-1} + \frac{1}{(-c)^{k-1} (cX+d)} \right)
		\\ \displaybreak[0]
		&\phant
		+ \frac{a_0}{k-1} \sum_{j=0}^{k-2} \left( X - x' \right)^{k-j-2} (x' - z_0)^{j+1} 
		\left(
			\binom{k-1}{j+1} + \binom{k-1}{j} \frac{1}{(cz_0 + d)^k} 
		\right)
		\\
		&=
		R_{f, \gamma} (X) 
		+ \frac{a_0}{k-1} \left( \left( \frac{cX+d}{c} \right)^{k-1} + \frac{1}{(-c)^{k-1} (cX+d)} \right)
		\\ \displaybreak[0]
		&\phant
		+ \frac{a_0}{k-1} 
		\left(
			(X - z_0)^{k-1} - (X - x')^{k-1}
			+ \frac{1}{(cz_0 + d)^k} \frac{x' - z_0}{X - x'} \left( (X - z_0)^{k-1} - (x' - z_0)^{k-1} \right)
		\right)
		\\
		&=
		R_{f, \gamma} (X)
		- \frac{a_0}{k-1} (X - z_0)^{k-1} \left( \frac{1}{(cX+d)(cz_0 + d)^{k-1}} - 1 \right).
	\end{align}
\end{proof}

Finally, we prove the reciprocity formula.

\begin{proof}[Proof of \zcref{thm:reciprocity_gen'd_Ded_sum}]
	By \zcref{rem:Ded_sum_rep}, we have
	\begin{align}
		&\phant
		S_f (\gamma(x))
		- \frac{a_0}{k-1} (\gamma(x) - \gamma(z_0))^{k-1} - \frac{a_0^{(\gamma(x))}}{\abs{cr+dp}^k} \frac{1}{\gamma(x) - \gamma(z_0)}
		\\
		&=
		\int_{\gamma(z_0)}^{\iu \infty} \left( f(z) - a_0 \right) (\gamma(x) - z)^{k-2} dz
		+ \int_{\gamma(x)}^{\gamma(z_0)} \left( f(z) - \frac{a_0^{(\gamma(x))}}{\abs{cr+dp}^k (\gamma(x)-z)^k} \right) 
		(\gamma(x) - z)^{k-2} dz
		\\
		&=
		\int_{z_0}^{\gamma^{-1} (\iu \infty)} \left( f(\gamma(z)) - a_0 \right) (\gamma(x) - \gamma(z))^{k-2} \frac{dz}{(cz+d)^2}
		\\
		&\phant
		+ \int_{x}^{z_0} \left( f(\gamma(z)) - \frac{a_0^{(\gamma(x))}}{\abs{cr+dp}^k (\gamma(x) - \gamma(z))^k} \right) 
		(\gamma(x) - \gamma(z))^{k-2} \frac{dz}{(cz+d)^2}.
	\end{align}
	Since
	\[
	\gamma(x) - \gamma(z)
	=
	\frac{x-z}{(cx+d) (cz+d)},
	\]
	we have
	\begin{align}
		&\phant
		S_f (\gamma(x))
		\\
		&=
		\int_{z_0}^{\gamma^{-1} (\iu \infty)} \left( f(z) - \frac{a_0}{(cz+d)^k} \right) \left( \frac{x-z}{cx+d} \right)^{k-2} dz
		+ \int_{x}^{z_0} \left( f(z) - \frac{a_0^{(\gamma(x))} (cx+d)^k}{\abs{cr+dp}^k (x-z)^k} \right) 
		\left( \frac{x-z}{cx+d} \right)^{k-2} dz
		\\
		&\phant
		+ \frac{a_0}{k-1} \left( \frac{x-z_0}{(cx+d)(cz_0 + d)} \right)^{k-1} 
		+ \frac{a_0^{(\gamma(x))}}{\abs{cr+dp}^k} \frac{(cx+d) (cz_0 + d)}{x - z_0}.
	\end{align}
	
	Let $ \delta = \smat{r & r' \\ p & p'} \in \SL_2(\Z) $ and $ N_x \in \Z_{>0} $ be as defined before \zcref{thm:reciprocity_gen'd_Ded_sum}.
	Since
	\begin{align}
		\gamma \delta &= \pmat{ar+bp & ar'+bp' \\ cr+dp & cr'+dp'}, 
		\\
		(\eval{f}_k \gamma \delta) (\tau)
		&= (\eval{f}_k \delta) (\tau)
		= a_0^{(x)} + O \left( q^{1/N_x} \right),
	\end{align}
	we have $ a_0^{(\gamma(x))} = \sgn(cr+dp)^k a_0^{(x)} $.
	Thus, we have
	\begin{align}
		&\phant
		(\eval{S_f}_{2-k} \gamma) (x)
		\\
		&=
		\int_{z_0}^{\gamma^{-1} (\iu \infty)} \left( f(z) - \frac{a_0}{(cz+d)^k} \right) (x-z)^{k-2} dz
		+ \int_{x}^{z_0} \left( f(z) - \frac{a_0^{(x)}}{p^k (x-z)^k} \right) (x-z)^{k-2} dz
		\\
		&\phant
		+ \frac{a_0}{k-1} \frac{1}{cx+d} \left( \frac{x-z_0}{cz_0 + d} \right)^{k-1}  
		+ \frac{a_0^{(x)}}{p^k (cx+d)} \frac{cz_0 + d}{x - z_0}.
	\end{align}
	Thus, by \zcref{rem:Ded_sum_rep,lem:rep_period_poly}, we have
	\begin{align}
		&\phant
		(\eval{S_f}_{2-k} \gamma) (x)- S_f(x)
		\\
		&=
		\int_{z_0}^{\gamma^{-1} (\iu \infty)} \left( f(z) - \frac{a_0}{(cz+d)^k} \right) (x-z)^{k-2} dz
		- \int_{z_0}^{\iu \infty} \left( f(z) - a_0 \right) (x-z)^{k-2} dz
		\\
		&\phant
		+ \frac{a_0}{k-1} (x-z_0)^{k-1}\left( \frac{1}{(cx+d) (cz_0 + d)^{k-1}} - 1 \right)   
		+ \frac{a_0^{(x)}}{p^k} \frac{1}{x - z_0} \left( \frac{cz_0 + d}{cx+d} - 1 \right)
		\\
		&=
		R_{f, \gamma} (x)
		- \frac{a_0^{(x)}}{p^k} \frac{c}{cx+d}.
	\end{align}
\end{proof}


\section{Eichler Integrals and Generalized Dedekind Sums} \label{sec:Eichler_int}


In this section, we give another proof for reciprocity (\zcref{thm:reciprocity_gen'd_Ded_sum}) by considering asymptotics of the Eichler integral.

\begin{dfn}
	We define the \emph{Eichler integral of $ f $} as
	\[
	\calE_f (\tau) \coloneqq
	\int_{\tau}^{\iu \infty} (f(z) - a_0) (\tau - z)^{k-2} dz.
	\]
\end{dfn}

\begin{rem} \label{rem:Eichler_int_Fourier}
	We have
	\[
	\calE_f (\tau) =
	- \left( \frac{N}{2\pi\iu} \right)^{k-1} (k-2)!
	\sum_{n=1}^\infty \frac{a_n}{n^{k-1}} q^{n/N}.
	\]
\end{rem}

The Eichler integral satisfies a modular transformation formula of weight $ 2-k $ with an error term as the regularized period polynomial.

\begin{lem} \label{lem:Eichler_int_mod_trans}
	For any $ \gamma \in \Gamma $, we have
	\[
	(\eval{\calE_f}_{2-k} \gamma) (\tau) - \calE_f (\tau) 
	=
	R_{f, \gamma} (\tau).
	\]
\end{lem}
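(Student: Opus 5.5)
The plan is to mirror the proof of \zcref{thm:reciprocity_gen'd_Ded_sum}, with the point $x$ replaced by $\tau \in \bbH$. Since $\tau$ lies in the upper half plane, no cusp behaviour at the lower endpoint needs regularizing, so the argument should be simpler. First, fix an auxiliary $z_0 \in \bbH$ and split the defining integral as
\[
\calE_f(\tau) = \int_{z_0}^{\iu\infty} (f(z)-a_0)(\tau-z)^{k-2}\,dz + \int_{\tau}^{z_0} (f(z)-a_0)(\tau-z)^{k-2}\,dz .
\]
In the second integral, separate off the constant: $\int_\tau^{z_0} a_0 (\tau-z)^{k-2}\,dz = -\frac{a_0}{k-1}(\tau-z_0)^{k-1}$. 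This gives
\[
\calE_f(\tau) = \int_{z_0}^{\iu\infty} (f(z)-a_0)(\tau-z)^{k-2}\,dz + \int_{\tau}^{z_0} f(z)(\tau-z)^{k-2}\,dz + \frac{a_0}{k-1}(\tau-z_0)^{k-1}.
\]

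Next, apply this identity at $\gamma(\tau)$, using the base point $\gamma(z_0)$ in place of $z_0$. In both integrals substitute $z \mapsto \gamma(z)$, and use
\[
\gamma(\tau)-\gamma(z) = \frac{\tau-z}{(c\tau+d)(cz+d)}, \qquad d\gamma(z) = \frac{dz}{(cz+d)^2},
\]
together with $f(\gamma(z)) = (cz+d)^k f(z)$ for $\gamma\in\Gamma$. The upper limit $\iu\infty$ becomes $\gamma^{-1}(\iu\infty) = -d/c$. Multiplying by $(c\tau+d)^{k-2}$ then yields
\begin{align}
(\eval{\calE_f}_{2-k}\gamma)(\tau) &= \int_{z_0}^{\gamma^{-1}(\iu\infty)} \Bigl(f(z) - \frac{a_0}{(cz+d)^k}\Bigr)(\tau-z)^{k-2}\,dz + \int_{\tau}^{z_0} f(z)(\tau-z)^{k-2}\,dz \\
&\phant + \frac{a_0}{k-1}\,\frac{1}{c\tau+d}\Bigl(\frac{\tau-z_0}{cz_0+d}\Bigr)^{k-1}.
\end{align}
This is exactly the computation already carried out in the proof of \zcref{thm:reciprocity_gen'd_Ded_sum}, except that no $a_0^{(x)}$ correction terms appear.

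Now subtract the expression for $\calE_f(\tau)$. The integrals over $[\tau, z_0]$ cancel identically. What remains is
\[
\int_{\iu\infty}^{z_0} (f(z)-a_0)(\tau-z)^{k-2}\,dz + \int_{z_0}^{\gamma^{-1}(\iu\infty)} \Bigl(f(z)-\frac{a_0}{(cz+d)^k}\Bigr)(\tau-z)^{k-2}\,dz + \frac{a_0}{k-1}(\tau-z_0)^{k-1}\Bigl(\frac{1}{(c\tau+d)(cz_0+d)^{k-1}} - 1\Bigr).
\]
By \zcref{lem:rep_period_poly} with $X=\tau$, this is precisely $R_{f,\gamma}(\tau)$. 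The case $c=0$ (so $\gamma = \pm T^b$) should be handled separately: there the claim reduces to $N$-periodicity of $\calE_f$, visible from \zcref{rem:Eichler_int_Fourier}. Alternatively, one can restrict to $c\neq 0$ as in the definition of $R_{f,\gamma}$.

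The main obstacle is justifying convergence in the transformed integral near the cusp $-d/c$. One must check that $f(z) - a_0(cz+d)^{-k}$ decays exponentially as $z \to -d/c$ along the path, so that the integral against $(\tau-z)^{k-2}$ converges. I would derive this from $(\eval{f}_k\gamma^{-1})(w) = f(w) = a_0 + O(q^{1/N})$ as $w \to \iu\infty$, exactly as in the estimate in the proof of \zcref{lem:twisted_L-func}. I would also track the branch and sign of $(c\tau+d)^{k-2}$ with the same care as in that proof, since $k$ is an integer and no branch issue arises for the polynomial factors.
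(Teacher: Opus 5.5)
Your argument is correct and is essentially the paper's: substitute $z\mapsto\gamma(z)$ in $\calE_f(\gamma(\tau))$, then recognize the result via the integral representation of $R_{f,\gamma}$ in Lemma~\ref{lem:rep_period_poly} at $X=\tau$. The only difference is that the paper takes $z_0=\tau$ in that lemma, so the boundary term $\frac{a_0}{k-1}(\tau-z_0)^{k-1}\bigl(\cdots\bigr)$ vanishes and no splitting over $[\tau,z_0]$ is needed.
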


\begin{proof}
	Let $ \gamma = \smat{a & b \\ c & d} $.
	We have
	\begin{align}
		\calE_f (\gamma(\tau))
		&=
		\int_{\tau}^{\gamma^{-1} (\iu \infty)} (f(\gamma(z)) - a_0) (\gamma(\tau) - \gamma(z))^{k-2} d \gamma(z)
		\\
		&=
		(c\tau + d)^{2-k}
		\int_{\tau}^{\gamma^{-1} (\iu \infty)} \left( f(\gamma(z)) - \frac{a_0}{(cz+d)^k} \right) 
		(\tau - z)^{k-2} dz.
	\end{align}
	On the other hand, by \zcref{lem:rep_period_poly} with $ z_0 = \tau $, we have
	\begin{align}
		R_{f, \gamma} (\tau)
		&=
		\int_{\iu \infty}^{\tau} \left( f(z) - a_0 \right) (\tau - z)^{k-2} dz
		+ \int_{\tau}^{\gamma^{-1} (\iu \infty)} \left( f(z) - \frac{a_0}{(cz+d)^k} \right) (\tau - z)^{k-2} dz
		\\
		&=
		(\eval{\calE_f}_{2-k} \gamma) (\tau) - \calE_f (\tau).
	\end{align}
\end{proof}

Our generalized Dedekind sums appear in the asymptotics of the Eichler integral.

\begin{prop} \label{prop:Eichler_int_Ded_sum}
	We have
	\[
	\calE_f (\tau)
	=
	\frac{a_0^{(x)}}{(k-1) p^k} \frac{1}{\tau - x} + S_f(x) + O(\tau - x)
	\quad \text{ as } \tau \to x.
	\]
\end{prop}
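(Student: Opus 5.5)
The plan is to compare the definition of $\calE_f(\tau)$ with the representation of $S_f(x)$ in \zcref{rem:Ded_sum_rep}, choosing the auxiliary point $z_0$ to be $\tau$ itself.

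\textbf{Step 1: split the Eichler integral.} By \zcref{rem:Ded_sum_rep} with $z_0 = \tau$,
\[
S_f(x) = \int_{\tau}^{\iu\infty} (f(z)-a_0)(x-z)^{k-2}\,dz + \int_{x}^{\tau} \left( f(z) - \frac{a_0^{(x)}}{p^k (x-z)^k} \right)(x-z)^{k-2}\,dz + \frac{a_0}{k-1}(x-\tau)^{k-1} + \frac{a_0^{(x)}}{p^k}\frac{1}{x-\tau}.
\]
The Eichler integral $\calE_f(\tau)$ has the same first integral, except that the kernel is $(\tau-z)^{k-2}$ instead of $(x-z)^{k-2}$.

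\textbf{Step 2: handle the first integral.} Expand $(\tau-z)^{k-2} = \sum_j \binom{k-2}{j}(\tau-x)^j (x-z)^{k-2-j}$. The $j=0$ term reproduces the first integral in $S_f(x)$. The terms with $j \ge 1$ carry a factor $(\tau-x)^j$ times $\int_\tau^{\iu\infty}(f(z)-a_0)(x-z)^{k-2-j}\,dz$. Each of these integrals is, up to the finite correction from \zcref{lem:twisted_L-func}, a value of $\widehat{L}_f$ at $s = k-1-j$, so they stay bounded as $\tau \to x$. The point is that $\widehat{L}_f$ has possible poles only at $s = 0, k$, and $k-1-j$ avoids both for $0 \le j \le k-2$.

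To make the boundedness rigorous, I will rewrite each integral via the last expression in \zcref{lem:twisted_L-func} with a fixed $z_0$. This gives
\[
\widehat{L}_f(k-1-j;x) - \int_x^{\tau}\left(f(z) - \frac{a_0^{(x)}}{p^k(x-z)^k}\right)(x-z)^{k-2-j}\,dz
\]
plus explicit elementary terms. Those elementary terms include $\frac{a_0^{(x)}}{p^k}(x-\tau)^{-1-j}/(-1-j)$, which is singular as $\tau \to x$.

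\textbf{Step 3: collect the polar terms.} The main obstacle is the bookkeeping of these singular pieces. Multiplied by $(\tau-x)^j$, each contributes a multiple of $(\tau-x)^{-1}$, with coefficient $\binom{k-2}{j}\frac{(-1)^{j}}{-(1+j)}$ up to sign conventions. Summing over $j = 0, \dots, k-2$ together with the term $\frac{a_0^{(x)}}{p^k}\frac{1}{x-\tau}$ should give exactly $\frac{a_0^{(x)}}{(k-1)p^k}\frac{1}{\tau-x}$. This rests on the identity $\sum_{j=0}^{k-2}\binom{k-2}{j}\frac{(-1)^j}{j+1} = \frac{1}{k-1}$. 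I will check the signs carefully against the orientation conventions used in the proof of \zcref{lem:rep_period_poly}.

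\textbf{Step 4: bound the remainders.} All remaining integrals $\int_x^\tau (\cdots)$ of $f(z) - \frac{a_0^{(x)}}{p^k(x-z)^k}$ against powers of $(x-z)$ are $O(\tau-x)$. This is because the integrand decays like $\exp(-c/\abs{z-x})$ times a power, by the cusp estimate in the proof of \zcref{lem:twisted_L-func}, assuming $\tau \to x$ within a cone (radially). The leftover term $\frac{a_0}{k-1}(x-\tau)^{k-1}$ is also $O(\tau-x)$. The $j \ge 1$ contributions from the non-polar parts are $O(\tau-x)$ as well.

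What remains is $\frac{a_0^{(x)}}{(k-1)p^k}\frac{1}{\tau-x} + S_f(x)$, as claimed.
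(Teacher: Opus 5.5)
Your proof is correct, and it takes a different route from the paper's.

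\textbf{The paper's route.} The paper substitutes $z\mapsto z+\tau$, so the kernel becomes $(x-z)^{k-2}$ and the shift falls on $f$. It keeps a fixed $z_0$ and extracts the pole in one stroke from the antiderivative $\frac{d}{dz}\bigl(\frac{x-z}{x-z-\tau}\bigr)^{k-1}=(k-1)\tau\frac{(x-z)^{k-2}}{(x-z-\tau)^k}$. It then replaces $f(z+\tau)$ by $f(z)$ via the Lipschitz bound $\abs{f(z+\tau)-f(z)}\le K\abs{\tau}$, which it only justifies on $\Im(z)\ge y_0$.

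\textbf{Your route.} You leave $f$ unshifted, put $z_0=\tau$, and expand the kernel about $x$. You then read off each coefficient from Lemma~\ref{lem:twisted_L-func} at the integer points $s=k-1-j\in\{1,\dots,k-1\}$, which avoid the poles $0$ and $k$. The price is the identity $\sum_{j=0}^{k-2}\binom{k-2}{j}\frac{(-1)^j}{j+1}=\frac{1}{k-1}$. Note that the $j=0$ summand there is just the term $\frac{a_0^{(x)}}{p^k}\frac{1}{x-\tau}$ of your Step~1 moved across, so do not add it a second time.

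What you gain is an exact formula. Recombine $\sum_j\binom{k-2}{j}(\tau-x)^j(x-z)^{k-2-j}=(\tau-z)^{k-2}$ in the remainders, and your Steps~1--3 give
\begin{align*}
\calE_f(\tau) &= \frac{a_0^{(x)}}{(k-1)p^k}\frac{1}{\tau-x} + \sum_{j=0}^{k-2}\binom{k-2}{j}\widehat{L}_f(k-1-j;x)\,(\tau-x)^j + \frac{a_0}{k-1}(\tau-x)^{k-1} \\
&\quad - \int_x^{\tau}\Bigl(f(z)-\frac{a_0^{(x)}}{p^k(x-z)^k}\Bigr)(\tau-z)^{k-2}\,dz ,
\end{align*}
whose $j=0$ term is $S_f(x)$. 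So in a sector you obtain the complete expansion, which is what the paper's Mellin-transform proof produces, with a remainder smaller than any power of $\tau-x$. The only analytic input is cusp decay on the segment from $x$ to $\tau$. That is precisely the region near $x$ that the paper's Lipschitz estimate on $\Im(z)\ge y_0$ leaves implicit.

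\textbf{A precision for Step~4.} The equality $\Im(\delta^{-1}(z))=1/(p^2\abs{z-x})$ in the proof of Lemma~\ref{lem:twisted_L-func} is valid only on the vertical ray above $x$. In general $\Im(\delta^{-1}(z))=\Im(z)/(p^2\abs{z-x}^2)$, which is $\ge \sin\varepsilon/(p^2\abs{z-x})$ when $\varepsilon\le\arg(z-x)\le\pi-\varepsilon$. This is exactly what your cone hypothesis supplies along the segment, and it is needed: the statement genuinely requires non-tangential approach.
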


\begin{proof}
	For arbitrary $ z_0 \in \bbH $, we have
	\begin{align}
		\calE_f (\tau + x)
		&=
		\int_{x}^{\iu \infty} (f(z + \tau) - a_0) (x - z)^{k-2} dz
		\\
		&=
		\int_{z_0}^{\iu \infty} (f(z + \tau) - a_0) (x - z)^{k-2} dz
		+ \int_{x}^{z_0} \left( f(z + \tau) - \frac{a_0^{(x)}}{p^k (x-z-\tau)^k} \right) (x - z)^{k-2} dz
		\\
		&\phant
		+ \int_{x}^{z_0} \left( \frac{a_0^{(x)}}{p^k (x-z-\tau)^k} - a_0 \right) (x - z)^{k-2} dz.
	\end{align}
	Since
	\[
	\frac{d}{dz} \left( \frac{x-z}{x-z-\tau} \right)^{k-1}
	=
	(k-1) \tau \frac{(x-z)^{k-2}}{(x-z-\tau)^k},
	\]
	the last integral can be calculated as
	\begin{align}
		&\phant
		\frac{a_0^{(x)}}{(k-1) p^k} \tau^{-1} \left( \frac{x-z_0}{x - z_0 - \tau} \right)^{k-1} + \frac{a_0}{k-1} (x - z_0)^{k-1}
		\\
		&=
		\frac{a_0^{(x)}}{(k-1) p^k} \tau^{-1} + \frac{a_0^{(x)}}{p^k} \frac{1}{x - z_0} + \frac{a_0}{k-1} (x - z_0)^{k-1}
		+ O(\tau) \quad \text{ as } \tau \to 0.
	\end{align}
	Since $ f'(z) = (2\pi\iu/N) \sum_{n=1}^\infty a_n n \bm{e} (nz/N) $ converges uniformly, for any $ y_0 > 0 $, there exists $ K>0 $ such that for all $ z \in \bbH $ with $ \Im(z) \ge y_0 $, it holds that $ \abs{f'(z)} \le K e^{-2\pi \Im(z)/N} $.
	Thus, if $ \Im(z) \ge y_0 $, then we have
	\[
	\abs{f(z + \tau) - f(z)}
	= \abs{\tau \int_0^1 f'(z + \tau u) du}
	\le K \abs{\tau}.
	\]
	Therefore, we obtain
	\begin{align}
		\calE_f (\tau + x)
		&=
		\int_{z_0}^{\iu \infty} (f(z) - a_0) (x - z)^{k-2} dz
		+ \int_{x}^{z_0} \left( f(z) - \frac{a_0^{(x)}}{p^k (x-z)^k} \right) (x - z)^{k-2} dz
		\\
		&\phant
		+ \frac{a_0^{(x)}}{(k-1) p^k} \tau^{-1} + \frac{a_0^{(x)}}{p^k} \frac{1}{x - z_0} + \frac{a_0}{k-1} (x - z_0)^{k-1}
		+ O(\tau) \quad \text{ as } \tau \to 0.
	\end{align}
	By \zcref{rem:Ded_sum_rep}, we obtain the claim.
\end{proof}

By combining \zcref{lem:Eichler_int_mod_trans,prop:Eichler_int_Ded_sum}, we obtain another proof of reciprocity (\zcref{thm:reciprocity_gen'd_Ded_sum}).

\begin{proof}[An alternative proof of \zcref{thm:reciprocity_gen'd_Ded_sum}]
	We take the limit as $ \tau \to x $ of
	\[
	(\eval{\calE_f}_{2-k} \gamma) (\tau) - \calE_f (\tau) 
	=
	R_{f, \gamma} (\tau).
	\]
	By \zcref{prop:Eichler_int_Ded_sum} and the fact $ a_0^{(\gamma(x))} = \sgn(cr+dp)^k a_0^{(x)} $ shown in the above proof of \zcref{thm:reciprocity_gen'd_Ded_sum}, we have
	\begin{align}
		\calE_f (\gamma(\tau))
		&=
		\frac{a_0^{(\gamma(x))}}{(k-1) \abs{cr+dp}^k} \frac{1}{\gamma(\tau) - \gamma(x)}
		+ S_f (\gamma(x)) + O(\gamma(\tau) - \gamma(x))
		\\
		&=
		\frac{a_0^{(x)}}{(k-1) (cr+dp)^k} \frac{(c\tau + d) (cx+d)}{\tau - x}
		+ S_f (\gamma(x)) + O(\tau - x)
		\\
		&=
		\frac{a_0^{(x)}}{(k-1) p^k (cx+d)^k} \left( \frac{(cx+d)^2}{\tau - x} + c(cx+d) \right)
		+ S_f (\gamma(x)) + O(\tau - x)
		\quad \text{ as } \tau \to x.
	\end{align}
	On the other hand, we have
	\[
	(c\tau + d)^{k-2}
	=
	(cx + d)^{k-2} + (k-2)c(cx+ d)^{k-3} (\tau - x)
	+ O((\tau - x)^2) \quad \text{ as } \tau \to x.
	\]
	Thus, we obtain
	\[
	(\eval{\calE_f}_{2-k} \gamma) (\tau)
	=
	\frac{a_0^{(x)}}{(k-1) p^k} \frac{1}{\tau - x}
	+ (cx + d)^{k-2} S_f (\gamma(x)) 
	+ \frac{a_0^{(x)}}{p^k} \frac{c}{cx+d}
	+ O(\tau - x) \quad \text{ as } \tau \to x.
	\]
	Therefore, we obtain 
	\begin{align}
		R_{f, \gamma} (\tau)
		&=
		(\eval{\calE_f}_{2-k} \gamma) (\tau) - \calE_f (\tau)
		\\
		&=
		(\eval{S_f}_{2-k} \gamma) (x) - S_f (x)
		+ \frac{a_0^{(x)}}{p^k} \frac{c}{cx+d}
		+ O(\tau - x) \quad \text{ as } \tau \to x,
	\end{align}
	which implies the claim.
\end{proof}


\section{Asymptotic formulas} \label{sec:asymp}


In this section, we give another proof of the asymptotic formula in \zcref{prop:Eichler_int_Ded_sum} by using a technique of Mellin transform.


\subsection{Asymptotic expansions and Mellin transform} \label{subsec:asymp_Mellin}


We present an asymptotic formula that follows from the Mellin transform.

\begin{dfn}[Asymptotic expansion] \label{dfn:asymptotic}
	For a function $ \varphi \colon (0, \infty) \to \bbC $,
	a monotonically increasing sequence $ (\nu_j)_{j=0}^\infty $ in $ \R_{>0} $,
	and a sequence $ (a_j)_{j=0}^\infty $ in $ \bbC $, we write 
	\[
	\varphi(t) \sim \sum_{j=0}^{\infty} a_j t^{-\nu_j} \text{ as } t \to +0
	\]
	to express that
	\[
	\varphi(t) = \sum_{j=0}^{j_1} a_j t^{-\nu_j} + O(t^{-\nu_{j+1}}) \text{ as } t \to +0
	\]
	holds for any integer $ j \ge 0 $.
	
	In this case, we call the above equation the \emph{asymptotic expansion of $ \varphi(t) $ as $ t \to +0 $}. 
\end{dfn}

\begin{dfn}[Mellin transform]
	For a continuous function $ \varphi \colon (0, \infty) \to \bbC $ which satisfies 
	\[
	\varphi (t) =
	\begin{cases}
		O(t^{-\alpha}) & \text{ as $ t \to +0 $}, \\
		O(t^{-\beta}) & \text{ as $ t \to +\infty $}
	\end{cases}
	\]
	for some $ \alpha < \beta $, we define its Mellin transform as
	\[
	\calM \varphi (s) = \calM [\varphi] (s)
	\coloneqq
	\int_{0}^{\infty} \varphi(t) t^{s-1} dt.
	\]
\end{dfn}

It is known that $ \calM \varphi (s) $ is a holomorphic function on $ (\alpha, \beta) + \iu \R $.

It is well known that the asymptotic expansion of $ \varphi(t) $ is governed by the poles of its Mellin transform $ \calM \varphi (s) $ (\cite[Theorem 3 and 4]{Flajolet-Gourdon-Dumas}). 
From this relation, the following asymptotic formula follows.

\begin{thm}[Mellin summation formula, {\cite[Theorem 5]{Flajolet-Gourdon-Dumas}}] \label{thm:Mellin_sum_formula}
	Let $ \varphi \colon (0, \infty) \to \bbC $ be a continuous function and
	\[
	\Lambda (s) \coloneqq
	\sum_{n=1}^\infty \frac{\lambda_n}{\mu_n^s}
	\]
	be a Dirichlet series defined by sequences $ (\lambda_n)_{n=1}^\infty $ in $ \bbC $ and $ (\mu_n)_{n=1}^\infty $ in $ \R_{>0} $.
	We assume the following conditions:
	\begin{itemize}
		\item There exist $ \alpha < \beta $ such that 
		\[
		\varphi(t) =
		\begin{cases}
			O(t^{-\alpha}) & \text{ as $ t \to +0 $}, \\
			O(t^{-\beta}) & \text{ as $ t \to +\infty $}.
		\end{cases}
		\]
		\item The series $ \Lambda(s) $ converges absolutely in the region $ \Re(s) > \sigma_0 $ for some $ \sigma_0 < \beta $.
		\item Functions $ \calM \varphi(s) $ and $ \Lambda(s) $ extend on $ \bbC $ meromorphically.
		\item In the region $ \Re(s) < \beta $, the following hold:
		\begin{itemize}
			\item We have $ \calM \varphi(s) = O(\abs{s}^{-R}) $ as $ \Im(s) \to \pm \infty $ for any $ R > 0 $.
			\item The series $ \Lambda(s) $ has at most polynomial growth as $ \Im(s) \to \pm \infty $.
		\end{itemize}
	\end{itemize}
	We denote by $ \calP $ the set of poles of $ \Lambda(s) \calM \varphi(s) $ in the region $ \Re(s) < \beta $.
	For a pole $ \xi \in \calP $, we denote the principal part of $ \Lambda(s) \calM \varphi(s) $ at $ s = \xi $ as%
	\footnote{
		Here, the notation $l \ll \infty$ means that there exists a constant $C$ such that $l < C$.
	}
	\[
	\Lambda(s) \calM \varphi(s)
	=
	\sum_{0 \le l \ll \infty} \frac{c_{\xi, l}}{(s - \xi)^{l+1}} + O(1)
	\quad \text{ as $ s \to \xi $}.
	\]
	Under the above assumptions, we have the following asymptotic formula:
	\[
	\sum_{n=1} \lambda_n \varphi(\mu_n t)
	\sim
	\sum_{\xi \in \calP} \sum_{0 \le l \ll \infty} \frac{(-1)^l}{l!} c_{\xi, l} t^{-\xi} (\log t)^l
	\quad
	\text{ as $ t \to +0 $}.
	\]
\end{thm}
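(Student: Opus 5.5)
The plan is to identify the harmonic sum $F(t) \coloneqq \sum_{n \ge 1} \lambda_n \varphi(\mu_n t)$ through its Mellin transform, and then recover the asymptotics by Mellin inversion and shifting the contour to the left, as in \cite{Flajolet-Gourdon-Dumas}.

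\emph{Step 1: the transform of $F$.} Fix $c$ with $\max(\alpha, \sigma_0) < c < \beta$; this interval is nonempty as long as $\alpha < \beta$ and $\sigma_0 < \beta$, and after enlarging $\alpha$ slightly we may assume $\alpha < c$. For $\Re(s) = c$ the double integral $\sum_n \abs{\lambda_n} \int_0^\infty \abs{\varphi(\mu_n t)} t^{c-1} dt = \sum_n \abs{\lambda_n} \mu_n^{-c} \int_0^\infty \abs{\varphi(u)} u^{c-1} du$ is finite. Both factors are finite: the first because $c > \sigma_0$, the second because $\alpha < c < \beta$. Fubini and the substitution $u = \mu_n t$ then give
\[
\calM F(s) = \Lambda(s) \calM \varphi(s)
\]
on the strip $\max(\alpha, \sigma_0) < \Re(s) < \beta$. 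The same estimate shows $F(t) = O(t^{-c})$, so $F$ is a legitimate Mellin-transformable function there.

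\emph{Step 2: inversion and contour shift.} By the rapid decay of $\calM \varphi$ on vertical lines, $\Lambda \calM \varphi$ is absolutely integrable on $\Re(s) = c$. Mellin inversion therefore gives
\[
F(t) = \frac{1}{2\pi\iu} \int_{c - \iu\infty}^{c + \iu\infty} \Lambda(s) \calM \varphi(s) \, t^{-s} \, ds .
\]
For any $M > 0$, we move the line of integration to $\Re(s) = -M$, integrating over rectangles with horizontal sides at $\Im(s) = \pm T$. The horizontal contributions tend to $0$ as $T \to \infty$: there $\calM \varphi$ is $O(\abs{s}^{-R})$ for every $R$, $\Lambda$ grows at most polynomially, and $\abs{t^{-s}} \le \max(t^{-c}, t^{M})$. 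We will need to choose $T$ avoiding poles. Here we use that the meromorphic function has only finitely many poles in the strip $-M \le \Re(s) \le c$. This follows because the decay forces poles to lie in a bounded region of each strip (one should either check this or add it as a mild assumption, as is implicit in the statement). The residue theorem yields
\[
F(t) = \sum_{\substack{\xi \in \calP \\ \Re(\xi) > -M}} \operatorname{Res}_{s = \xi} \left( \Lambda(s) \calM \varphi(s) t^{-s} \right)
+ \frac{1}{2\pi\iu} \int_{-M - \iu\infty}^{-M + \iu\infty} \Lambda(s) \calM \varphi(s) t^{-s} ds .
\]
The remaining integral is bounded by $t^{M}$ times a convergent integral, so it is $O(t^{M})$.

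\emph{Step 3: computing residues.} Expanding $t^{-s} = t^{-\xi} \exp(-(s - \xi) \log t) = t^{-\xi} \sum_{m \ge 0} (-\log t)^m (s - \xi)^m / m!$ and multiplying by the principal part $\sum_l c_{\xi, l} (s - \xi)^{-l-1}$, the residue is $\sum_l \frac{(-1)^l}{l!} c_{\xi, l} t^{-\xi} (\log t)^l$. Letting $M \to \infty$ and ordering the poles by decreasing real part gives the asserted asymptotic expansion. The main obstacle is the justification in Step 2: the uniform vanishing of the horizontal segments and the finiteness of the poles in vertical strips. I would handle this by combining the superpolynomial decay of $\calM \varphi$ with the polynomial bound on $\Lambda$ uniformly in $-M \le \Re(s) \le c$, choosing $T$ along a sequence avoiding poles.
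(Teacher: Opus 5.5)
Your proposal is correct and follows the standard proof in the source the paper cites without proof (Flajolet--Gourdon--Dumas, Theorem~5): the harmonic-sum rule $\calM F = \Lambda \, \calM\varphi$ on the strip $\max(\alpha,\sigma_0) < \Re(s) < \beta$, then Mellin inversion, then a contour shift to the left using decay that is uniform in vertical strips (which, as you observe, also confines the poles in each strip to a finite set), and finally the residue computation that produces the terms $\frac{(-1)^l}{l!} c_{\xi,l} t^{-\xi} (\log t)^l$. The only detail to add is that $M$ must be chosen so that no pole lies on the line $\Re(s) = -M$; this is always possible, since there are only countably many poles and finitely many in each strip.
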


We modify this formula in a form suitable for our purposes.

\begin{dfn}
	Let $ a \in \R $.
	A $ C^\infty $ function $ \varphi \colon (a, \infty) \to \bbC $ is called \emph{of rapid decay} as $ t \to +\infty $ 
	if $ t^m \varphi^{(n)} (x) $ is bounded as $ t \to +\infty $ for any $ m, n \in \Z_{\ge 0} $.
\end{dfn}

\begin{cor} \label{cor:asymp_Mellin}
	Let $ \varepsilon > 0 $, $ \varphi \colon (-\varepsilon, \infty) \to \bbC $ be a $ C^\infty $ function of rapid decay as $ t \to + \infty $, and
	\[
	\Lambda (s) \coloneqq
	\sum_{n=1}^\infty \frac{\lambda_n}{\mu_n^s}
	\]
	be a Dirichlet series defined by sequences $ (\lambda_n)_{n=1}^\infty $ in $ \bbC $ and $ (\mu_n)_{n=1}^\infty $ in $ \R_{>0} $.
	We assume that the series $ \Lambda(s) $ satisfies the following properties:
	\begin{itemize}
		\item it converges absolutely in the region $ \Re(s) > \sigma_0 $ for some $ \sigma_0 > 0 $\textup{;}
		\item it admits a meromorphic continuation on $ \bbC $\textup{;}
		\item it has at most polynomial growth as $ \Im(s) \to \pm \infty $\textup{;}
		\item all its poles are simple.
	\end{itemize}
	We denote by $ \calP $ the set of poles of $ \Lambda(s) $ in $ \bbC $.
	Then, we have an asymptotic formula
	\begin{align}
		\sum_{n=1}^\infty \lambda_n \varphi(\mu_n t)
		&\sim
		\sum_{\xi \in \calP \smallsetminus \Z_{\le 0}} 
		\left( \Res_{s = \xi} \Lambda(s) \right) \calM \varphi (\xi) t^{-\xi}
		\\
		&\phantom{{}\sim{}}
		+ \sum_{j \in \Z_{\ge 0} \cap (-\calP)} 
		\left(
		\frac{\varphi^{(j)} (0)}{j!} \left( \CT_{s = -j} \Lambda(s) - \log (t) \Res_{s = -j} \Lambda(s) \right)
		+ C_{\varphi, j} \Res_{s = -j} \Lambda(s)
		\right) t^j
		\\
		&\phantom{{}\sim{}}
		+ \sum_{j \in \Z_{\ge 0} \smallsetminus (-\calP)} \Lambda(-j) \frac{\varphi^{(j)} (0)}{j!} t^j
		\quad \text{ as } t \to +0,
	\end{align}
	where
	\begin{align}
		\Lambda(s) &= 
		\frac{1}{s - \xi} \Res_{s = \xi} \Lambda(s) + \CT_{s = \xi} \Lambda(s)
		+ O(s-\xi)
		\quad \text{ as $ s \to \xi $},
		\\
		C_{\varphi, j}
		&\coloneqq
		\int_{1}^{\infty} \varphi(t) t^{j-1} dt
		+ \int_{0}^{1}
		\left( \varphi(t) - \sum_{0 \le l \le j} \frac{\varphi^{(l) (0)} (0)}{l!} t^l \right) t^{j-1} dt
		+ \sum_{l=0}^{j-1} \frac{\varphi^{(l)}}{l!} \frac{1}{l-j}.
	\end{align}
\end{cor}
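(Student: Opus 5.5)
We derive the corollary from \zcref{thm:Mellin_sum_formula} applied to $\varphi$ itself and then compute the residues of $\Lambda(s)\calM\varphi(s)$ explicitly. First we check the hypotheses. Since $\varphi$ is $C^\infty$ on $(-\varepsilon,\infty)$ and of rapid decay, we have $\varphi(t)=O(1)$ as $t\to+0$ and $\varphi(t)=O(t^{-\beta})$ for every $\beta$. Hence $\calM\varphi(s)$ is holomorphic on $\Re(s)>0$, and we may take $\beta>\sigma_0$ arbitrarily large. This is what lets the formula hold "for any $j$": the truncation in \zcref{dfn:asymptotic} at any order is reached by choosing $\beta$ large.

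Next we meromorphically continue $\calM\varphi$ and bound it on vertical lines. For $\Re(s)>-J-1$ we write
\[
\calM\varphi(s)=\int_1^\infty\varphi(t)t^{s-1}dt+\int_0^1\Bigl(\varphi(t)-\sum_{l\le J}\tfrac{\varphi^{(l)}(0)}{l!}t^l\Bigr)t^{s-1}dt+\sum_{l\le J}\frac{\varphi^{(l)}(0)}{l!}\frac{1}{s+l}.
\]
The middle integrand is $O(t^{J+1+\Re s-1})$, so it converges there. This shows that $\calM\varphi$ has only simple poles, located at $s=-l$, with residue $\varphi^{(l)}(0)/l!$. Evaluating this representation at $s=-j$ after removing the $l=j$ polar term gives the constant term $\CT_{s=-j}\calM\varphi=C_{\varphi,j}$, with $J=j$ and the integrand $t^{j-1}$ read as $t^{-j-1}$ with the sign conventions of the statement. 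For rapid decay in $\Im(s)$ on vertical strips we integrate by parts repeatedly, $\calM\varphi(s)=\frac{(-1)^m}{s(s+1)\cdots(s+m-1)}\calM[\varphi^{(m)}](s+m)$. Since $\varphi^{(m)}$ is also of rapid decay and smooth at $0$, this gives $O(|s|^{-R})$ for every $R$ in any strip avoiding the poles. The remaining hypotheses on $\Lambda$ are assumed in the statement.

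Finally we classify the poles of $\Lambda(s)\calM\varphi(s)$ and read off their principal parts.
\begin{itemize}
\item At $\xi\in\calP\smallsetminus\Z_{\le0}$, $\calM\varphi$ is holomorphic (for $\Re\xi>0$ directly, otherwise by the continuation), so the pole is simple with residue $\Res\Lambda\cdot\calM\varphi(\xi)$. This contributes $\Res_{s=\xi}\Lambda(s)\,\calM\varphi(\xi)\,t^{-\xi}$.
\item At $s=-j$ with $-j\notin\calP$, the pole is simple with residue $\Lambda(-j)\varphi^{(j)}(0)/j!$, contributing $\Lambda(-j)\frac{\varphi^{(j)}(0)}{j!}t^j$.
\item At $s=-j$ with $-j\in\calP$, both factors have simple poles, so we get a double pole. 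Its coefficients are $c_{-j,1}=\Res\Lambda\cdot\varphi^{(j)}(0)/j!$ and $c_{-j,0}=\CT\Lambda\cdot\varphi^{(j)}(0)/j!+\Res\Lambda\cdot C_{\varphi,j}$. Plugging these into \zcref{thm:Mellin_sum_formula} with the factor $(-1)^l/l!$ gives the $-\log t$ term as stated.
\end{itemize}

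The main obstacle is the bookkeeping in the continuation of $\calM\varphi$: correctly identifying $C_{\varphi,j}$ as the finite part, including the $\sum_{l<j}\frac{\varphi^{(l)}(0)}{l!}\frac{1}{l-j}$ terms. The vertical decay estimate via integration by parts is routine but has to be carried out uniformly on strips.
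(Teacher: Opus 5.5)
Your proposal is correct and follows essentially the same route as the paper: apply the Mellin summation formula, continue $\calM\varphi$ by subtracting the Taylor polynomial on $(0,1)$, and read off the principal parts of $\Lambda(s)\calM\varphi(s)$ in the three cases. The continuation gives simple poles at $s=-j$ with residue $\varphi^{(j)}(0)/j!$ and constant term $C_{\varphi,j}$; you are right to read the printed $t^{j-1}$ as $t^{-j-1}$.

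The only real deviation is that you get vertical decay by repeated integration by parts instead of the paper's Fourier/Sobolev argument. This is preferable: it gives $O(|s|^{-R})$ on every vertical strip, including $\Re(s)\le 0$ where the summation theorem needs it, whereas the paper's lemma only treats $\Re(s)>0$. Note also that this left-strip control, not taking $\beta$ large, is what gives the expansion to arbitrary order.
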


The proof relies on the following lemma.

\begin{lem} \label{lem:Mellin_smooth}
	For $ \varepsilon > 0 $ and a $ C^\infty $ function $ \varphi \colon (-\varepsilon, \infty) \to \bbC $ of rapid decay as $ t \to + \infty $, the following hold.
	\begin{enumerate}
		\item \label{item:lem:Mellin_smooth:conv}
		The Mellin transform $ \calM \varphi (s) $ converges absolutely on $ \Re(s) > 0 $.
		
		\item \label{item:lem:Mellin_smooth:eval}
		For any $ \sigma \in \R_{>0} $ and $ N \in \Z_{>0} $, it holds
		$ \calM \varphi (\sigma + \iu y) = O(\abs{y}^{-N}) $ as $ y \to \pm \infty $.
		
		\item \label{item:lem:Mellin_smooth:residue}
		The function $ \calM \varphi (s) $ extends meromorphically on $ \bbC $ and its possible poles are in $ \Z_{\le 0} $.
		For any $ j \in \Z_{\ge 0} $, we have
		\[
		\calM \varphi (s) =
		\frac{\varphi^{(j)} (0)}{j!} \frac{1}{s+j} + C_{\varphi, j} + O(s+j)
		\quad \text{ as } s \to -j.
		\]
	\end{enumerate}
\end{lem}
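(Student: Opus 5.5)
The plan is to split the integral at $t=1$ and treat the two pieces separately. The piece over $[1,\infty)$ is an entire function of $s$ with rapid decay in $\Im(s)$. The piece over $[0,1]$ is handled by Taylor expansion at $0$, which produces explicit simple poles at the non-positive integers.

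\emph{Part (1).} Smoothness on $(-\varepsilon,\infty)$ means $\varphi$ is bounded near $0$. Rapid decay gives $|\varphi(t)|\le C_m t^{-m}$ for $t\ge1$ and every $m$. Hence $\int_0^\infty|\varphi(t)|t^{\sigma-1}\,dt<\infty$ for every $\sigma>0$.

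\emph{Part (2).} I would integrate by parts $N$ times, using $t^{s-1}=\frac{d}{dt}(t^s/s)$ repeatedly. For $\Re(s)=\sigma>0$ the boundary terms vanish: at $0$ because $t^{s+n}\to0$, at $\infty$ because every derivative $\varphi^{(n)}$ decays rapidly. This gives
\[
\calM\varphi(s)=\frac{(-1)^N}{s(s+1)\cdots(s+N-1)}\int_0^\infty \varphi^{(N)}(t)\,t^{s+N-1}\,dt .
\]
The last integral is bounded uniformly on the line $\Re(s)=\sigma$, since $\varphi^{(N)}$ also decays rapidly. Therefore $\calM\varphi(\sigma+iy)=O(|y|^{-N})$.

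\emph{Part (3).} Fix $J\ge0$ and set $P_J(t)=\sum_{l=0}^J\varphi^{(l)}(0)t^l/l!$. For $\Re(s)>0$ write
\[
\calM\varphi(s)=\int_1^\infty\varphi(t)t^{s-1}\,dt+\int_0^1\bigl(\varphi(t)-P_J(t)\bigr)t^{s-1}\,dt+\sum_{l=0}^J\frac{\varphi^{(l)}(0)}{l!}\frac{1}{s+l}.
\]
\begin{itemize}
\item The first integral is entire in $s$, by rapid decay.
\item In the second, $\varphi(t)-P_J(t)=O(t^{J+1})$ by Taylor's theorem on $[0,1]$, so it is holomorphic on $\Re(s)>-J-1$.
\item The finite sum is meromorphic with at most simple poles at $s=-l$.
\end{itemize}
Since $J$ is arbitrary, $\calM\varphi$ extends meromorphically to $\bbC$ with poles only in $\Z_{\le0}$.

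To expand at $s=-j$, take $J=j$. The residue comes only from the term $l=j$, namely $\varphi^{(j)}(0)/j!$. The constant term is the value at $s=-j$ of the remaining holomorphic part:
\[
\int_1^\infty\varphi(t)t^{-j-1}\,dt+\int_0^1\bigl(\varphi-P_j\bigr)t^{-j-1}\,dt+\sum_{l=0}^{j-1}\frac{\varphi^{(l)}(0)}{l!}\frac{1}{l-j}.
\]

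This should be matched with $C_{\varphi,j}$. As stated, $C_{\varphi,j}$ appears to contain typos: the exponent is written $t^{j-1}$ where it should be $t^{-j-1}$, and $\varphi^{(l)}$ should be $\varphi^{(l)}(0)$. With those corrections the expression above is exactly $C_{\varphi,j}$. I would state this correction explicitly in the proof.

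The main obstacle is this last bookkeeping step: making sure the Taylor subtraction has the correct order, so that the integral converges at $s=-j$, and that the signs in the finite correction sum come out right.
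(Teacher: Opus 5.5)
Your proposal is correct and follows the paper's proof. In part (3) you use the same split at $t=1$, with the order-$j$ Taylor polynomial subtracted on $[0,1]$. In part (2), your $N$-fold integration by parts in $t$ is the explicit, hands-on form of the paper's argument that $\varphi(e^u)e^{\sigma u}\in W^{N,1}(\R)$ has an $O(\abs{y}^{-N})$ Fourier transform. Your correction of $C_{\varphi,j}$ (exponent $t^{-j-1}$ instead of $t^{j-1}$, and $\varphi^{(l)}(0)$ instead of $\varphi^{(l)}$) is also right: it is exactly what the paper's own decomposition gives when evaluated at $s=-j$.
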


\begin{proof}
	\zcref{item:lem:Mellin_smooth:conv} follows from 
	\[
	\varphi(t) =
	\begin{cases}
		O(1) & \text{ as $ t \to +0 $}, \\
		O(t^{-N}) & \text{ as $ t \to +\infty $ for any $ N \in \Z_{\ge 0} $}.
	\end{cases}
	\]
	
	We prove \zcref{item:lem:Mellin_smooth:eval}.
	Fix $ \sigma \in \R_{>0} $ and $ N \in \Z_{>0} $ arbitrarily.
	We have
	\[
	\calM \varphi (\sigma + \iu y)
	=
	\int_{-\infty}^{\infty} f(e^u) e^{\sigma u} e^{\iu uy} du.
	\]
	This can be viewed as the Fourier transform.
	Since $ f(e^u) e^{\sigma u} $ is an element of the Sobolev space
	\[
	W^{N, 1} (\R)
	\coloneqq
	\left\{
		\psi \in L^1 (\R)
		\relmiddle|
		\sum_{j=0}^N \int_{\R} \abs{\psi^{(j)} (u)} du
		< \infty
	\right\},
	\]
	its Fourier transform $ \calM \varphi (\sigma + \iu y) $ is $ O(\abs{y}^N) $ as $ y \to \pm \infty $.
	
	Finally, we prove \zcref{item:lem:Mellin_smooth:residue}.
	For any $ j \in \Z_{\ge 0} $, we have
	\begin{align}
		\calM \varphi(s) 
		&=
		\int_{1}^{\infty} \varphi(t) t^{s-1} dt
		+ \int_{0}^{1} \left( \varphi(t) - \sum_{l=0}^j \frac{\varphi^{(l)} (0)}{l!} t^{l} \right)
		+ \sum_{l=0}^j \frac{\varphi^{(l)} (0)}{l!} \frac{1}{s+l}
		\\
		&=
		\frac{\varphi^{(j)} (0)}{j!} \frac{1}{s+j}
		+ C_{\varphi, j} + O(s+j)
		\quad \text{ as } s \to -j.
	\end{align}
\end{proof}

\begin{proof}[Proof of \zcref{cor:asymp_Mellin}]
	By \zcref{lem:Mellin_smooth}, all possible poles of $ \Lambda(s) \calM \varphi (s) $ lie on $ \calP \cup \Z_{\le 0} $ and 
	for each $ \xi \in \calP \smallsetminus \Z_{\le 0} $ and $ j \in \Z_{\ge 0} $, we have
	\begin{align}
		\Lambda(s) \calM \varphi (s)
		&=
		\frac{1}{s-\xi} \calM \varphi(\xi) \Res_{s = \xi} \Lambda(s)
		+ O(1) \quad \text{ as } s \to \xi,
		\\
		\Lambda(s) \calM \varphi (s)
		&=
		\frac{1}{(s+j)^2} \frac{\varphi^{(j)} (0)}{j!} \Res_{s = \xi} \Lambda(s)
		+ \frac{1}{s+j} \left( \frac{\varphi^{(j)} (0)}{j!} \CT_{s = \xi} \Lambda(s) + C_{\varphi, j} \Res_{s = \xi} \Lambda(s) \right)
		+ O(1) \quad \text{ as } s \to -j.
	\end{align}
	Thus, we obtain the claim by \zcref{thm:Mellin_sum_formula}.
\end{proof}


\subsection{Asymptotic expansions for sums in Fourier coefficients of modular forms} \label{subsec:asymp_mod_form}


\zcref{cor:asymp_Mellin} implies the following asymptotic formula.

\begin{cor} \label{cor:asymp_mod_form}
	For any $ \varepsilon > 0 $ and $ \varphi \colon \{ \tau \in \bbC \mid \Im(\tau) > -\varepsilon \} \to \bbC $ be a holomorphic function of rapid decay as $ \Im(\tau) \to + \infty $, we have
	\begin{align}
		\sum_{n=1}^\infty \frac{a_n}{n^{k-1}} \bm{e} \left( \frac{nx}{N} \right) \varphi(n\tau)
		&\sim
		-\left( \frac{2\pi\iu}{N} \right)^{k-1} \frac{1}{(k-2)!}
		\left(
			\frac{a_0^{(x)}}{(k-1) p^k} \frac{2\pi}{N} \calM [\varphi(\iu t)] (1) \tau^{-1}
			+ S_f (x) \varphi(0)
		\right)
		\\
		&\phant
		+ \sum_{j=1}^\infty L_f (k-j-1; x) \frac{\varphi^{(j)} (0)}{j!} \tau^j
		\quad \text{ as } \tau \to 0.
	\end{align}
\end{cor}

\begin{proof}
	Let $ 0 < \theta < \pi $ be arbitrary and put $ \tau = t e^{\iu \theta} $.
	We apply \zcref{cor:asymp_Mellin} for $ \varphi(t) $ as $ \varphi_\theta^{} (t) \coloneqq \varphi(t e^{\iu \theta})$ and $ \Lambda(s) $ as
	\begin{align}
		L_f (s+k-1; x)
		=
		-\left( \frac{2\pi}{N} \right)^{s+k-1} 
		\bm{e} \left( \frac{s+k-1}{4} \right)
		\frac{1}{\Gamma(s+k-1)}
		\widehat{L}_f (s+k-1; x).
	\end{align}
	By \zcref{lem:twisted_L-func}, $ \widehat{L}_f (s+k-1; x) $ has possible poles at $ s = 1, 1-k $ which are single.
	We also have $ \Gamma(s+k-1) $ has no zeros and single poles at $ s \in \Z_{\le 1-k} $.
	Thus, $ L_f (s+k-1; x) $ has possible poles at $ s = 1 $ which are single with residue
	\[
	\frac{a_0^{(x)}}{(k-1)! p^k} \left( \frac{2\pi\iu}{N} \right)^k
	\]
	by \zcref{lem:twisted_L-func}.
	We have
	\[
	\calM \varphi_\theta^{} (1) = \iu e^{-\iu \theta} \calM [\varphi(\iu t)] (1), \quad
	\quad
	\varphi_\theta^{(j)} (0) = e^{j \iu \theta} \varphi^{(j)} (0) 
	\]
	Thus, we obtain an asymptotic expansion
	\begin{align}
		\sum_{n=1}^\infty \frac{a_n}{n^{k-1}} \bm{e} \left( \frac{nx}{N} \right) \varphi(n\tau)
		&\sim
		\left( \frac{2\pi\iu}{N} \right)^{k-1} \frac{1}{(k-2)!}
		\left(
		\frac{a_0^{(x)}}{(k-1) p^k} \frac{2\pi\iu}{N} \calM [\varphi(\iu t)] (1) \frac{\iu}{t e^{\iu \theta}}
		- S_f (x) \varphi(0)
		\right)
		\\
		&\phant
		+ \sum_{j=1}^\infty L_f (k-j-1; x) \frac{\varphi^{(j)} (0)}{j!} \left( t e^{\iu \theta} \right)^j
		\quad \text{ as } t \to +0.
	\end{align}
	Since each asymptotic coefficient is independent of $ \theta $, we obtain the claim.
\end{proof}

Finally, we give another proof of the asymptotic formula in \zcref{prop:Eichler_int_Ded_sum}.

\begin{proof}[An alternative proof of \zcref{prop:Eichler_int_Ded_sum}]
	Let $ \varphi(\tau) \coloneqq \bm{e} (\tau/N) $.
	Then, we have
	\[
	\calE_f (x + \tau) =
	- \left( \frac{N}{2\pi\iu} \right)^{k-1} (k-2)!
	\sum_{n=1}^\infty \frac{a_n}{n^{k-1}} \bm{e} \left( \frac{nx}{N} \right) \varphi(n\tau),
	\]
	\[
	\calM [\varphi(\iu t)] (1) =
	\frac{N}{2\pi},
	\quad
	\varphi^{(j)} (0) =
	\left( \frac{2\pi\iu}{N} \right)^j.
	\]
	Thus, by \zcref{cor:asymp_mod_form}, we have
	\begin{align}
		\calE_f (x + \tau) 
		&=
		\frac{a_0^{(x)}}{(k-1) p^k} \tau^{-1}
		+ S_f (x)
		- (k-2)! \sum_{j=1}^\infty \frac{L_f (k-j-1; x)}{j!} \left( \frac{2\pi\iu}{N} \right)^{j + k - 1} \tau^j
		\quad \text{ as } \tau \to 0.
	\end{align}
\end{proof}


\section{Generalized Dedekind sums for Eisenstein series} \label{sec:Eisenstein}


We now specialize our generalized Dedekind sums to the case of Eisenstein series of $ \Gamma(N) $.


\subsection{Preliminaries} \label{subsec:prelim}



\subsubsection{Discrete Fourier transform} 


\begin{dfn} \label{dfn:discrete_Fourier_trans}
	Let $ N $ be a positive integer.
	For a periodic map $ \chi \colon \Z/N\Z \to \bbC $,
	we define its \emph{discrete Fourier transform} $ \widehat{\chi} \colon \Z/N\Z \to \bbC $ as
	\[
	\widehat{\chi} (n) \coloneqq
	\frac{1}{N} \sum_{m \in \Z/N\Z} \chi(m) \bm{e} \left( -\frac{mn}{N} \right).
	\]
\end{dfn}

\begin{lem}[Discrete Fourier expansion] \label{lem:discrete_Fourier_expansion}
	Let $ N $ be a positive integer and $ \chi \colon \Z/N\Z \to \bbC $ be a periodic map.
	Then, for any $ m \in \Z/N\Z $, we have
	\[
	\chi(m) = \sum_{n \in \Z/N\Z} \widehat{\chi} (n) \bm{e} \left( \frac{mn}{N} \right).
	\]
\end{lem}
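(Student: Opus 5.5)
The plan is to substitute the definition of $\widehat{\chi}$ into the right-hand side and exchange the two finite sums. Concretely, for fixed $m \in \Z/N\Z$ we write
\[
\sum_{n \in \Z/N\Z} \widehat{\chi}(n) \bm{e}\left(\frac{mn}{N}\right)
=
\frac{1}{N} \sum_{l \in \Z/N\Z} \chi(l) \sum_{n \in \Z/N\Z} \bm{e}\left(\frac{(m-l)n}{N}\right).
\]
This interchange is harmless, since both sums are finite. Note also that every summand depends only on residues modulo $N$, because $\bm{e}$ has period $1$; so the expressions are well defined on $\Z/N\Z$.

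The one ingredient needed is the orthogonality relation for additive characters:
\[
\sum_{n \in \Z/N\Z} \bm{e}\left(\frac{jn}{N}\right) = N \cdot \bm{1}_{N\Z}(j).
\]
If $N \mid j$, every term equals $1$, giving $N$. Otherwise $\zeta \coloneqq \bm{e}(j/N) \neq 1$ is an $N$-th root of unity, and the geometric series gives $(\zeta^N - 1)/(\zeta - 1) = 0$.

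Applying this with $j = m - l$ kills every term except $l = m$. That term contributes $\frac{1}{N} \chi(m) N = \chi(m)$, which is the claim.

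There is no real obstacle here. The only point requiring care is verifying the orthogonality relation, and the geometric series handles it.
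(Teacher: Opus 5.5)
Your proof is correct and complete: substituting the definition of $\widehat{\chi}$, interchanging the finite sums, and using the orthogonality relation $\sum_{n \in \Z/N\Z} \bm{e}(jn/N) = N \cdot \bm{1}_{N\Z}(j)$ gives the claim. The paper states this lemma without proof as a standard fact, and your argument is the usual justification for it.
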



\subsubsection{Periodic Bernoulli polynomials} 


We recall some basic facts about periodic Bernoulli polynomials.

\begin{dfn} \label{dfn:Bernoulli_poly}
	\begin{enumerate}
		\item We define the \emph{Bernoulli numbers} $ (B_j)_{j=0}^\infty $ as
		\[
		\sum_{j=0}^\infty \frac{B_j}{j!} t^j \coloneqq \frac{t }{e^t - 1}.
		\]
		
		\item We define the \emph{Bernoulli polynomials} $ (B_j(\alpha))_{j=0}^\infty $ as
		\[
		\sum_{j=0}^\infty \frac{B_j (\alpha)}{j!} t^j \coloneqq \frac{t e^{\alpha t}}{e^t - 1}.
		\]
		
		\item For an integer $ j \ge 0 $, we define the \emph{$ j $-th periodic Bernoulli polynomial} as
		\[
		\widetilde{B}_j (\alpha) \coloneqq B_j (\alpha - \floor{\alpha}).
		\]
	\end{enumerate}
\end{dfn}

\begin{lem} \label{lem:Bernoullli_poly_sym}
	For any $ j \ge 0 $, it holds 
	\[
	B_j (1-\alpha) = (-1)^j B_j(\alpha).
	\]
\end{lem}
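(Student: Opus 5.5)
The identity $B_j(1-\alpha) = (-1)^j B_j(\alpha)$ is most directly proved from the generating function in \zcref{dfn:Bernoulli_poly}. I would write down
\[
\sum_{j=0}^\infty \frac{B_j(1-\alpha)}{j!} t^j = \frac{t e^{(1-\alpha)t}}{e^t - 1}
\]
and then manipulate the right-hand side. The goal is to reach the generating function of $B_j(\alpha)$ evaluated at $-t$.

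Multiplying numerator and denominator by $e^{-t}$ gives
\[
\frac{t e^{-\alpha t}}{1 - e^{-t}} = \frac{(-t) e^{\alpha (-t)}}{e^{-t} - 1}.
\]
By definition this equals $\sum_{j} B_j(\alpha) (-t)^j / j!$. Both sides are power series in $t$, convergent for $\abs{t} < 2\pi$, so comparing the coefficients of $t^j$ yields $B_j(1-\alpha) = (-1)^j B_j(\alpha)$ for every $j \ge 0$.

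There is no real obstacle here. The only point needing care is that the generating function identity is an equality of analytic functions near $t=0$, so coefficient comparison is legitimate. If the author intends the statement for the periodic version $\widetilde{B}_j$, the same identity transfers for $\alpha \notin \Z$, since $1-\alpha - \floor{1-\alpha} = 1 - (\alpha - \floor{\alpha})$. Integer $\alpha$ would need a separate check, which only matters for $j=1$.
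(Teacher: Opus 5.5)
Your generating-function argument is correct and complete: the identity $\frac{t e^{(1-\alpha)t}}{e^t-1} = \frac{(-t)e^{\alpha(-t)}}{e^{-t}-1}$ gives the claim by comparing coefficients of $t^j$. The paper states this lemma without proof as a standard fact, and your argument is the standard derivation. Your aside on $\widetilde{B}_j$ is also accurate: the identity transfers for $\alpha \notin \mathbf{Z}$, and for integer $\alpha$ it fails only when $j=1$.
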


The following Fourier expansions are known.

\begin{lem} \label{lem:periodic_Bernoullli_poly_Fourier_exp}
	For any $ j \ge 1 $ and $ \alpha \in \R $, it holds 
	\[
	\widetilde{B}_j (\alpha)
	=
	-\frac{j!}{(2\pi\iu)^j} \sideset{}{^*}\sum_{n \in \Z \smallsetminus \{ 0 \}} \frac{\bm{e} (n\alpha)}{n^j}
	- \frac{\delta_{j, 1}}{2} \bm{1}_{\Z} (\alpha), 
	\]
	where $ \bm{1}_{\Z} $ is the characteristic function of $ \Z $ and
	\[
	\sideset{}{^*}\sum_{n \in \Z \smallsetminus \{ 0 \}}
	\coloneqq
	\begin{dcases}
		\sum_{n \in \Z \smallsetminus \{ 0 \}} & \text{ if } j \ge 2, \\
		\lim_{R \to \infty} \sum_{n \in \Z \smallsetminus \{ 0 \}, \, \abs{n} \le R} & \text{ if } j = 1.
	\end{dcases}
	\]
\end{lem}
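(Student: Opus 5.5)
The statement to prove is \zcref{lem:periodic_Bernoullli_poly_Fourier_exp}, the Fourier expansion of $\widetilde{B}_j$. The plan is to settle the case $j=1$ directly and then reach all $j \ge 2$ by induction through termwise integration.

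\textbf{Base case $j=1$.} For $\alpha \notin \Z$ we have $\widetilde{B}_1(\alpha) = \alpha - \floor{\alpha} - \tfrac12$, the sawtooth function. We compute its Fourier coefficients on $[0,1)$:
\[
\int_0^1 \left(t-\tfrac12\right) \bm{e}(-nt)\,dt = -\frac{1}{2\pi\iu n}, \qquad n \neq 0,
\]
while the $n=0$ coefficient vanishes. Since the sawtooth has bounded variation, the symmetric partial sums converge pointwise to it away from the jumps. This gives $\widetilde{B}_1(\alpha) = -\frac{1}{2\pi\iu}\sum^*_{n\neq 0} \bm{e}(n\alpha)/n$ for $\alpha \notin \Z$.

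At $\alpha \in \Z$ the symmetric sum vanishes by the pairing $n \leftrightarrow -n$. Meanwhile $\widetilde{B}_1(\alpha) = B_1(0) = -\tfrac12$. The correction term $-\tfrac{\delta_{j,1}}{2}\bm{1}_\Z(\alpha)$ accounts for exactly this discrepancy.

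\textbf{Induction for $j \ge 2$.} Assume the formula for $j-1$. We use $B_j' = jB_{j-1}$ and $B_j(1) = B_j(0)$ for $j \ge 2$; the latter follows from \zcref{lem:Bernoullli_poly_sym} for even $j$ and from $B_j(0)=B_j(1)=0$ for odd $j \ge 3$. Together these show that $\widetilde{B}_j$ is continuous and periodic, and piecewise $C^1$ with derivative $j\widetilde{B}_{j-1}$.

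The mean of $\widetilde{B}_j$ is $\int_0^1 B_j = (B_{j+1}(1)-B_{j+1}(0))/(j+1) = 0$. Integration by parts then gives the Fourier coefficients
\[
c_n(\widetilde{B}_j) = \frac{j}{2\pi\iu n}\, c_n(\widetilde{B}_{j-1}), \qquad n \neq 0,
\]
with no boundary terms, since $\widetilde{B}_j$ is continuous and periodic. Starting from $c_n(\widetilde{B}_1) = -1/(2\pi\iu n)$, this yields $c_n(\widetilde{B}_j) = -j!/((2\pi\iu n)^j)$.

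For $j \ge 2$ these coefficients are absolutely summable. The Fourier series therefore converges uniformly to the continuous function $\widetilde{B}_j$ at every point, including the integers, and no correction term appears.

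\textbf{Main obstacle.} The only delicate point is the pointwise behaviour at the discontinuities when $j=1$, which requires the symmetric summation and the $-\tfrac12$ correction. I would handle it with the Dirichlet–Jordan theorem, or by explicit summation using $\sum_{n\ge1}\sin(2\pi n\alpha)/n = \pi(\tfrac12 - \{\alpha\})$.
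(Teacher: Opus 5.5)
Your proposal is correct. The paper does not prove this lemma at all: it introduces it with ``The following Fourier expansions are known'' and uses it as a black box. Your argument therefore supplies the standard justification the paper omits, and each step checks out:
\begin{itemize}
\item the sawtooth case via Dirichlet--Jordan;
\item the observation that the symmetric sum vanishes at integers, which is exactly why the symmetric summation convention and the $-\frac{1}{2}\bm{1}_{\Z}(\alpha)$ correction are needed;
\item integration by parts using $\widetilde{B}_j' = j\widetilde{B}_{j-1}$ and the continuity of $\widetilde{B}_j$ for $j \ge 2$;
\item absolute convergence together with uniqueness of Fourier coefficients for continuous functions.
\end{itemize}

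One small simplification is available. The identity $B_j(\alpha+1) - B_j(\alpha) = j\alpha^{j-1}$ can be read off directly from the generating function $t e^{\alpha t}/(e^t-1)$. It gives $B_j(1) = B_j(0)$ for all $j \ge 2$, and hence $\int_0^1 B_j = 0$ for all $j \ge 1$, in one stroke. This removes the need for the parity split and for the vanishing of the odd Bernoulli numbers, which the paper never states.
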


Periodic Bernoulli polynomials have the following discrete Fourier expansions.

\begin{lem} \label{lem:periodic_Bernoullli_poly_disc_Fourier_exp}
	Let $ j $ be a positive integer.
	For any positive integer $ N $ and $ m, n \in \Z/N\Z $, we have
	\begin{align}
		\sum_{m \in \Z/N\Z} \widetilde{B}_j \left( \frac{m}{N} \right) \bm{e} \left( \frac{mn}{N} \right)
		&=
		\begin{dcases}
			\frac{B_j}{N^{j-1}}  & \text{ if } N \mid n, \\
			\frac{j}{(2\iu)^j N^{j-1}} \cot^{(j-1)} \left( \frac{\pi n}{N} \right) - \frac{\delta_{j, 1}}{2}
			 & \text{ if } N \nmid n.
		\end{dcases}
		\\
		\widetilde{B}_j \left( \frac{m}{N} \right)
		&=
		\frac{B_j}{N^j} 
		+ \frac{\delta_{j, 1}}{2N}
		+ \frac{j}{(-2N \iu)^j}
		\sum_{1 \le n < N} \cot^{(j-1)} \left( \frac{\pi n}{N} \right) \bm{e} \left( \frac{mn}{N} \right).
	\end{align}
\end{lem}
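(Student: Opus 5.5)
The plan is to obtain both identities from the Fourier expansion of $\widetilde{B}_j$ in \zcref{lem:periodic_Bernoullli_poly_Fourier_exp}, together with orthogonality of additive characters modulo $N$. The second identity should then follow from the first by the inversion formula of \zcref{lem:discrete_Fourier_expansion}.

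\emph{First identity.} Substitute $\alpha = m/N$ into \zcref{lem:periodic_Bernoullli_poly_Fourier_exp}, multiply by $\bm{e}(mn/N)$ and sum over $m \in \Z/N\Z$. The sum over $m$ is finite and the series over $l$ is symmetrically convergent, so the two may be interchanged. The orthogonality relation
\[
\sum_{m \in \Z/N\Z} \bm{e}\left( \frac{m(l+n)}{N} \right) = N \bm{1}_{N\Z}(l+n)
\]
reduces the left-hand side to
\[
-\frac{j!\,N}{(2\pi\iu)^j} \sideset{}{^*}\sum_{l \equiv -n \bmod N,\ l \neq 0} \frac{1}{l^j} \;-\; \frac{\delta_{j,1}}{2},
\]
where the last term comes from $m = 0$, the only $m$ with $m/N \in \Z$.

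If $N \mid n$, write $l = N l'$. The lattice sum becomes $N^{-j}$ times the full sum, and \zcref{lem:periodic_Bernoullli_poly_Fourier_exp} at $\alpha = 0$ identifies $-\frac{j!}{(2\pi\iu)^j}\sideset{}{^*}\sum l'^{-j}$ with $B_j + \delta_{j,1}/2$. This gives $N^{1-j}(B_j + \delta_{j,1}/2) - \delta_{j,1}/2$, which equals $B_j/N^{j-1}$ for every $j \ge 1$, since the two $\delta$-terms cancel when $j = 1$.

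If $N \nmid n$, write $l = N l' - n$. The sum becomes $N^{-j}\sideset{}{^*}\sum_{l'} (l' - n/N)^{-j}$. I would evaluate it with the partial-fraction expansion $\pi\cot(\pi z) = \sideset{}{^*}\sum_{l} (z+l)^{-1}$, differentiated $j-1$ times:
\[
\sideset{}{^*}\sum_{l} \frac{1}{(z+l)^j} = \frac{(-1)^{j-1}\pi^j}{(j-1)!}\cot^{(j-1)}(\pi z),
\]
evaluated at $z = -n/N$. Then $\cot^{(j-1)}(-\pi n/N) = (-1)^j \cot^{(j-1)}(\pi n/N)$, because $\cot^{(j-1)}$ has parity $(-1)^j$. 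Collecting the constants $j!/(j-1)! = j$ and $\pi^j/(2\pi\iu)^j = (2\iu)^{-j}$ should give $\frac{j}{(2\iu)^j N^{j-1}}\cot^{(j-1)}(\pi n/N) - \frac{\delta_{j,1}}{2}$.

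\emph{Second identity.} Apply \zcref{lem:discrete_Fourier_expansion} to the map $m \mapsto \widetilde{B}_j(m/N)$. Its Fourier coefficients at $n'$ are $\frac{1}{N}$ times the first sum with $n = -n'$. Feeding in the first identity:
\begin{itemize}
\item the term $n' = 0$ contributes $B_j/N^j$;
\item the terms $n' \neq 0$ contribute the cotangent sum, once $n'$ is replaced by $-n'$ and the parity of $\cot^{(j-1)}$ is used again, which turns $(2\iu)^{-j}$ into $(-2\iu)^{-j}$;
\item the constants $-\delta_{j,1}/2$ give $-\frac{\delta_{j,1}}{2N}\sum_{n' \neq 0}\bm{e}(mn'/N) = \frac{\delta_{j,1}}{2N} - \frac{\delta_{j,1}}{2}\bm{1}_{N\Z}(m)$.
\end{itemize}

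\emph{Expected obstacle.} The main difficulty is the bookkeeping of signs and of the $\delta_{j,1}$ corrections. For $j=1$ the series is only conditionally convergent, so the interchange of summations must respect the symmetric truncation used in $\sideset{}{^*}\sum$. More substantively, at $m \equiv 0$ with $j = 1$ the computation leaves the extra term $-\tfrac12\bm{1}_{N\Z}(m)$, which is absent from the displayed statement. I expect the second formula to require either $N \nmid m$ when $j = 1$, or this correction term added; I would check that case against $\widetilde{B}_1(0) = -1/2$ and state the restriction explicitly.
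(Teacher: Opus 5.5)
Your proof is correct and is essentially the paper's. Both arguments expand $\widetilde{B}_j$ via its Fourier series, apply orthogonality of additive characters, evaluate the residue-class sum for $N\nmid n$ with the differentiated partial-fraction expansion of $\pi\cot(\pi z)$, and obtain the second identity by discrete Fourier inversion. The one difference is that the paper handles $N\mid n$ with the multiplication formula $\sum_{m=0}^{N-1}B_j(m/N)=N^{1-j}B_j$ instead of the Fourier series at $\alpha=0$, which is inessential.

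Your caveat is also right, and it exposes a genuine slip in the statement. For $j=1$ and $N\mid m$, the displayed second formula gives $B_1/N+\frac{1}{2N}-\frac{1}{2N\iu}\sum_{n=1}^{N-1}\cot(\pi n/N)=0$ rather than $\widetilde{B}_1(0)=-\frac12$. The correct identity therefore needs the extra term $-\frac{\delta_{j,1}}{2}\bm{1}_{N\Z}(m)$, which the paper's one-line reduction to the first identity overlooks.
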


\begin{proof}
	By \zcref{lem:discrete_Fourier_expansion}, it suffices to show only the first equality.
	The case $ N \mid n $ follows from the multiplicative formula for the Bernoulli polynomials.
	We assume $ N \nmid n $.
	By \zcref{lem:periodic_Bernoullli_poly_Fourier_exp}, we have
	\begin{align}
		\sum_{m \in \Z/N\Z} \widetilde{B}_j \left( \frac{m}{N} \right) \bm{e} \left( \frac{mn}{N} \right)
		&=
		\sum_{m \in \Z/N\Z} 
		\frac{-j!}{(2\pi\iu)^j} \sideset{}{^*}\sum_{l \in \Z \smallsetminus \{ 0 \}} \frac{1}{l^j}
		\bm{e} \left( \frac{(l+n)m}{N} \right)
		- \frac{\delta_{j, 1}}{2}
		\\
		&=
		-\frac{j! N}{(2\pi\iu)^j} 
		\sideset{}{^*}\sum_{\substack{
				l \in \Z \smallsetminus \{ 0 \}, \\
				l \equiv -n \bmod N
		}} 
		\frac{1}{l^j}
		- \frac{\delta_{j, 1}}{2}
		\\
		&=
		\frac{(-1)^{j-1} j!}{(2\pi\iu)^j N^{j-1}} 
		\sideset{}{^*}\sum_{l \in \Z} \frac{1}{(l + n/N)^j}
		- \frac{\delta_{j, 1}}{2}
		\\
		&=
		\frac{j}{(2\iu)^j N^{j-1}} \cot^{(j-1)} \left( \frac{\pi n}{N} \right)
		- \frac{\delta_{j, 1}}{2}.
	\end{align}
\end{proof}

%
%


\subsubsection{Hurwitz zeta function} 


\begin{dfn}
	For complex numbers $ s $ and $ \alpha $ such that $ \Re(s) > 1 $ and $ \Re(\alpha) > 0 $, we define the \emph{Hurwitz zeta function} as
	\begin{equation} \label{eq:Hurwitz_zeta}
		\zeta(s; \alpha) \coloneqq
		\sum_{n=0}^\infty \frac{1}{(n + \alpha)^s}.
	\end{equation}
\end{dfn}

It is known that for fixed $ \alpha $, the Hurwitz zeta function $ \zeta(s; \alpha) $ extends holomorphically to $ \bbC \smallsetminus \{ 1 \} $ and has a single pole at $ s=1 $,

\begin{lem} \label{lem:Hurwitz_zeta_special_value}
	For fixed $ \alpha $ and any $ j \in \Z_{\ge 0} $, we have
	\[
	\zeta(-j; \alpha) = -\frac{B_{j+1} (\alpha)}{j+1}.
	\]
\end{lem}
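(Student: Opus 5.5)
The plan is to prove the identity $\zeta(-j;\alpha) = -B_{j+1}(\alpha)/(j+1)$ through the standard contour-integral (Hankel) representation of the Hurwitz zeta function. This representation gives the analytic continuation and evaluates the special values directly from the generating function of the Bernoulli polynomials in \zcref{dfn:Bernoulli_poly}.

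First, for $\Re(s) > 1$ and $\Re(\alpha) > 0$, expand $\Gamma(s)(n+\alpha)^{-s} = \int_0^\infty t^{s-1} e^{-(n+\alpha)t}\,dt$ and sum the geometric series. This gives
\[
\Gamma(s)\zeta(s;\alpha) = \int_0^\infty \frac{t^{s-1} e^{(1-\alpha)t}}{e^t - 1}\,dt,
\]
with the interchange of sum and integral justified by absolute convergence.

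Second, replace the real integral by a Hankel contour $H$. The contour comes from $+\infty$ just above the positive real axis, circles $0$ with a small radius $\rho < 2\pi$, and returns below the axis. This yields
\[
\zeta(s;\alpha) = \frac{\Gamma(1-s)}{2\pi\iu}\, e^{-\pi\iu s}\cdot(\text{sign convention})\int_H \frac{(-t)^{s-1} e^{-\alpha t}}{1-e^{-t}}\,dt .
\]
Equivalently, one can use $\zeta(s;\alpha) = -\frac{\Gamma(1-s)}{2\pi\iu}\int_H \frac{(-t)^{s-1}e^{-\alpha t}}{1-e^{-t}}\,dt$, taking care with the branch of $(-t)^{s-1}$. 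The contour integral is entire in $s$, which gives the continuation to $\bbC\smallsetminus\{1\}$.

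Third, set $s = -j$ with $j \in \Z_{\ge 0}$. The integrand then becomes single-valued, so the contributions along the two rays cancel and only the small circle remains. By residues, the integral equals $2\pi\iu$ times the coefficient of $t^{j}$ in $\frac{(-1)^{j+1} e^{-\alpha t}}{1-e^{-t}}$. Writing
\[
\frac{e^{-\alpha t}}{1-e^{-t}} = \frac{e^{(1-\alpha)t}}{e^t-1} = \sum_{m\ge 0} \frac{B_m(1-\alpha)}{m!} t^{m-1},
\]
the relevant coefficient is $B_{j+1}(1-\alpha)/(j+1)!$. Multiplying by $\Gamma(1+j) = j!$ and applying \zcref{lem:Bernoullli_poly_sym}, $B_{j+1}(1-\alpha) = (-1)^{j+1}B_{j+1}(\alpha)$, produces $-B_{j+1}(\alpha)/(j+1)$. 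The result extends to all fixed $\alpha$ for which $\zeta(s;\alpha)$ is defined by analytic continuation in $\alpha$.

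The main obstacle is bookkeeping rather than ideas: the branch of $(-t)^{s-1}$, the orientation of $H$, and the resulting sign. As a check, I will verify the final sign against the known value $\zeta(0;\alpha) = \tfrac12 - \alpha = -B_1(\alpha)$. That value can be confirmed independently from the partial-sum and Euler--Maclaurin continuation $\zeta(s;\alpha) = \frac{\alpha^{1-s}}{s-1} + \frac{\alpha^{-s}}{2} + O(\cdot)$.
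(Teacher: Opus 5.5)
The paper gives no proof of this lemma; it is recorded, like the continuation of $\zeta(s;\alpha)$ stated just before it, as a standard fact. Your Hankel-contour argument is the classical derivation, and it is correct.

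Take $H$ oriented counterclockwise about $0$ (in above the positive axis, out below) and the principal branch $|\arg(-t)|\le\pi$. For $\Re(s)>1$ the two rays then contribute $(e^{\pi i(s-1)}-e^{-\pi i(s-1)})\Gamma(s)\zeta(s;\alpha)$. So your second formula, $\zeta(s;\alpha)=-\frac{\Gamma(1-s)}{2\pi i}\int_H\frac{(-t)^{s-1}e^{-\alpha t}}{1-e^{-t}}\,dt$, holds by the reflection formula. At $s=-j$ only the circle survives, and you get
$$-j!\,(-1)^{j+1}B_{j+1}(1-\alpha)/(j+1)!=-B_{j+1}(\alpha)/(j+1).$$
This matches $\zeta(0;\alpha)=\tfrac12-\alpha$.

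Two cosmetic fixes:
\begin{itemize}
\item Your first displayed Hankel formula is wrong as written. The factor $e^{-\pi i s}$ belongs to the version with $t^{s-1}$ and $0\le\arg t\le 2\pi$, not the one with $(-t)^{s-1}$. Drop it in favour of your second formula.
\item No continuation in $\alpha$ is needed. The paper only defines $\zeta(s;\alpha)$ for $\Re(\alpha)>0$, where the Hankel integral converges directly, and the lemma is only applied with $\alpha=m/(Np)\in(0,1]$.
\end{itemize}

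Compared with the paper's bare citation, your route makes the argument self-contained. The same computation also justifies the continuation to the plane minus $s=1$ that the paper asserts without proof: the Hankel integral vanishes at $s=2,3,\dots$, cancelling the poles of $\Gamma(1-s)$ there.
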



\subsection{Basic facts for Eisenstein series of $ \Gamma(N) $} \label{subsec:Eisenstein:basic}


In this subsection, we briefly recall the definition and basic properties of Eisenstein series of $ \Gamma(N) $.
As before, we fix integers $ k \ge 2 $ and $ N \ge 1 $.

%

\begin{dfn}
	\label{dfn:Eisenstein_vector}
	For $ v = (c_v, d_v) \in \Z^2 / N \Z^2 $, we define
	\[
	E_k^v (\tau)
	\coloneqq
	(k-1)! \left( -\frac{N}{2\pi\iu} \right)^k 
	\sideset{}{^*}\sum_{(c, d) \in ((c_v, d_v) + N\Z^2) \smallsetminus \{ (0, 0) \}}
	\frac{1}{(c \tau + d)^k},
	\]
	where
	\[
	\sideset{}{^*}\sum_{(c, d) \in ((c_v, d_v) + N\Z^2) \smallsetminus \{ (0, 0) \}}
	\coloneqq
	\begin{dcases}
		\sum_{(c, d) \in ((c_v, d_v) + N\Z^2) \smallsetminus \{ (0, 0) \}} & \text{ if } k \ge 3, \\
		\sum_{c \in c_v + N\Z} \sum_{\substack{d \in d_v + N\Z, \\ (c, d) \neq (0, 0)}} & \text{ if } k = 2.
	\end{dcases}
	\]
\end{dfn}

\begin{prop} \label{prop:Eisenstein_vector}
	Let $ v = (c_v, d_v) \in \Z^2 / N \Z^2 $.
	\begin{enumerate}
		\item \label{item:prop:Eisenstein_vector:mod_form}
		\textup{(}\cite[Corollary 4.2.2 and Equation 4.5]{Diamond-Shurman}\textup{)}
		The function $ E_k^v (\tau) $ is a modular form for $ \Gamma(N) $ of weight $ k $.
		
		\item \label{item:prop:Eisenstein_vector:Fourier_exp}
		We have
		\[
		E_k^v (\tau)
		=
		\frac{(-N)^{k-1}}{k} \bm{1}_{N\Z}^{} (c_v) 
		\sum_{m \in \Z/N\Z} \bm{e} \left( -\frac{d_v m}{N} \right) \widetilde{B}_k \left( \frac{m}{N} \right)
		+ \sum_{n=1}^\infty \left( \sigma_{k-1}^v (n) + (-1)^k \sigma_{k-1}^{-v} (n) \right) q^{n/N},
		\]
		where $ \bm{1}_{N\Z}^{} (m) $ is the characteristic function of $ N\Z $ and
		\[
		\sigma_{k-1}^v (n)
		\coloneqq
		\sum_{\substack{
			0 < d \mid n, \\
			n/d \equiv c_v \bmod N
		}}
		\bm{e} \left( \frac{d_v d}{N} \right) d^{k-1}.
		\]
		
		\item \label{item:prop:Eisenstein_vector:cusp} \textup{(}\cite[Proposition 4.2.1 and Equation (4.6)]{Diamond-Shurman}\textup{)}
		For any $ \gamma = \smat{a & b \\ c & d} \in \SL_2 (\Z) $, we have
		\[
		\eval{E_k^v}_k \gamma = E_k^{v \gamma}.
		\]
		In particular, for each cusp $ a/c $, we have
		\[
		a_{E_k^v, 0}^{(a/c)}
		=
		\frac{(-N)^{k-1}}{k} \bm{1}_{N\Z}^{} (a c_v + c d_v) 
		\sum_{m \in \Z/N\Z} \bm{e} \left( -\frac{(b c_v + d d_v) m}{N} \right) \widetilde{B}_k \left( \frac{m}{N} \right),
		\]
		where $ a_{E_k^v, 0}^{(a/c)} $ is defined before \zcref{thm:reciprocity_gen'd_Ded_sum}.
		
		
	\end{enumerate}
\end{prop}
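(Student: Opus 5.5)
The plan is to prove part \zcref{item:prop:Eisenstein_vector:Fourier_exp} by a direct computation. Parts \zcref{item:prop:Eisenstein_vector:mod_form} and \zcref{item:prop:Eisenstein_vector:cusp} are quoted from Diamond--Shurman. The formula for $a_{E_k^v, 0}^{(a/c)}$ in \zcref{item:prop:Eisenstein_vector:cusp} follows by reading off the constant term of \zcref{item:prop:Eisenstein_vector:Fourier_exp} for $E_k^{v\gamma}$, with $v\gamma = (a c_v + c d_v, b c_v + d d_v)$.

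Following the ordering of \zcref{dfn:Eisenstein_vector}, with $c$ outer and $d$ inner (essential when $k=2$), I split the sum into the term $c=0$ and the terms $c \neq 0$.

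\textbf{The term $c = 0$.} This term occurs only if $c_v \equiv 0 \bmod N$, which accounts for the factor $\bm{1}_{N\Z}(c_v)$. It equals $\sum_{d \equiv d_v \bmod N,\, d \neq 0} d^{-k}$. I detect the congruence by orthogonality:
\[
\sum_{\substack{d \equiv d_v \bmod N, \\ d \neq 0}} d^{-k} = \frac{1}{N} \sum_{m \in \Z/N\Z} \bm{e}\left(-\frac{d_v m}{N}\right) \sum_{d \neq 0} \frac{\bm{e}(dm/N)}{d^k}.
\]
The inner sum is $-\frac{(2\pi\iu)^k}{k!} \widetilde{B}_k(m/N)$ by \zcref{lem:periodic_Bernoullli_poly_Fourier_exp}. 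Since $k \ge 2$, the $\delta_{j,1}$ correction does not arise and the series converges absolutely. Multiplying by $(k-1)!(-N/2\pi\iu)^k$ gives exactly $\frac{(-N)^{k-1}}{k}\sum_m \bm{e}(-d_v m/N)\widetilde{B}_k(m/N)$.

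\textbf{The terms $c \neq 0$.} Fix $c>0$ with $c \equiv c_v \bmod N$ and write $d = d_v + Nj$. Then
\[
\sum_j (c\tau + d_v + Nj)^{-k} = N^{-k}\sum_j (w+j)^{-k}, \qquad w = \frac{c\tau + d_v}{N}.
\]
The Lipschitz formula gives
\[
\sum_j (w+j)^{-k} = \frac{(-2\pi\iu)^k}{(k-1)!}\sum_{m\ge1} m^{k-1}\bm{e}(mw),
\]
valid for $k\ge2$ and $w \in \bbH$; for $k=2$ the sum over $j$ still converges absolutely. The prefactor $(k-1)!(-N/2\pi\iu)^k N^{-k}(-2\pi\iu)^k/(k-1)!$ equals $1$. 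So this $c$ contributes $\sum_{m \ge 1} m^{k-1}\bm{e}(d_v m/N) q^{cm/N}$.

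Setting $n = cm$ (so $n/m = c \equiv c_v$) and summing over $c>0$ yields $\sum_{n} \sigma^v_{k-1}(n) q^{n/N}$, with divisor $m$ playing the role of $d$ in $\sigma^v_{k-1}$. For $c<0$, I use
\[
(c\tau+d)^{-k} = (-1)^k\big((-c)\tau + (-d)\big)^{-k},
\]
where $(-c,-d)$ runs over the class $-v$ with $-c>0$. The same computation then gives $(-1)^k\sum_n \sigma^{-v}_{k-1}(n)q^{n/N}$.

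Interchanging the sums over $c$ and $m$ is justified by absolute convergence of $\sum_{c,m} m^{k-1}|q|^{cm/N}$ for $\tau \in \bbH$.

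The only delicate point I expect is $k=2$. There the double sum is not absolutely convergent, so the computation must respect the prescribed order: the $d$-sum first, for fixed $c$. Both the $c=0$ term (which needs the symmetric limit) and each Lipschitz sum for $c \neq 0$ do converge absolutely in $d$, since the exponent is $2$. So the argument goes through verbatim, and no non-holomorphic correction appears because we never reorder into an absolutely convergent lattice sum.
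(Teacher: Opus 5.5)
Your proof is correct and is essentially the paper's: the constant term is handled by exactly the same orthogonality argument combined with \zcref{lem:periodic_Bernoullli_poly_Fourier_exp}, your Lipschitz computation of the $c\neq 0$ terms reproves the Diamond--Shurman expansion (their Theorem 4.2.3) that the paper cites instead, and reading off the cusp constant from the constant term of $E_k^{v\gamma}$ is the intended route to (3). One caveat you share with the paper is that the quoted parts (1) and (3) hold only for $k\ge 3$: for $k=2$ the ordered sum is merely quasimodular, with $(E_2^v|_2\gamma)(\tau)=E_2^{v\gamma}(\tau)+\frac{i}{2\pi}\frac{c}{c\tau+d}$, so your remark that no non-holomorphic correction appears is true for the expansion (2) but not for the transformation law, and only combinations $\sum_v a_v E_2^v$ with $\sum_v a_v=0$ (e.g.\ $E_2^{\chi,\psi}$ with $\psi(0)=0$, which is the case used later) are modular for $\Gamma(N)$.
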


\begin{proof}[Proof of \zcref{item:prop:Eisenstein_vector:Fourier_exp}]
	By \cite[Theorem 4.2.3]{Diamond-Shurman}, we have
	\[
	E_k^v (\tau)
	=
	(k-1)! \left( -\frac{N}{2\pi\iu} \right)^k
	\bm{1}_{N\Z}^{} (c_v) \sum_{d \in (d_v + N\Z) \smallsetminus \{ 0 \}} \frac{1}{d^k}
	+ \sum_{n=1}^\infty \left( \sigma_{k-1}^v (n) + (-1)^k \sigma_{k-1}^{-v} (n) \right) q^{n/N}.
	\]
	We have
	\[
	\sum_{d \in (d_v + N\Z) \smallsetminus \{ 0 \}} \frac{1}{d^k}
	=
	\frac{1}{N} \sum_{d \in \Z \smallsetminus \{ 0 \}} \frac{1}{d^k}
	\sum_{m \in \Z/N\Z} \bm{e} \left( \frac{(d-d_v)m}{N} \right).
	\]
	By \zcref{lem:periodic_Bernoullli_poly_Fourier_exp}, this is equal to
	\[
	-\frac{(2\pi\iu)^k}{k! N} 
	\sum_{m \in \Z/N\Z} \bm{e} \left( -\frac{d_v m}{N} \right) \widetilde{B}_k \left( \frac{m}{N} \right).
	\]
\end{proof}


\subsection{Generalized Dedekind sums for Eisenstein series of $ \Gamma(N) $ with periodic maps} \label{subsec:Eisenstein:char_&_Ded_sum}


Based on the above preparations, we define Eisenstein series with periodic maps and state the reciprocity formula for the associated generalized Dedekind sums.

\begin{dfn} \label{dfn:Eisenstein_period_map}
	For periodic maps $ \chi, \psi \colon \Z/N\Z \to \bbC $ (not necessarily Dirichlet characters), we define
	\[
	E_k^{\chi, \psi} (\tau)
	\coloneqq
	\sum_{v = (c_v, d_v) \in \Z^2 / N\Z^2} \chi(c_v) \widehat{\psi}(d_v) E_k^v (\tau).
	\]
\end{dfn}

\begin{prop} \label{prop:Eisenstein_period_map}
	Let $ \chi, \psi \colon \Z/N\Z \to \bbC $ be periodic maps.
	\begin{enumerate}
		\item \label{item:prop:Eisenstein_period_map:Fourier_exp}
		We have
		\[
		E_k^{\chi, \psi} (\tau)
		=
		\frac{(-N)^{k-1}}{k} \chi(0) \sum_{m \in \Z/N\Z} \psi(m) \widetilde{B}_k \left( \frac{m}{N} \right)
		+ \sum_{n=1}^\infty \left( \sigma_{k-1}^{\chi, \psi} (n) + (-1)^k \sigma_{k-1}^{\chi^-, \psi^-} (n) \right) q^{n/N},
		\]
		where $ \chi^- (m) \coloneqq \chi(-m) $ and
		\[
		\sigma_{k-1}^{\chi, \psi} (n)
		\coloneqq
		\sum_{0 < d \mid n} \chi \left( \frac{n}{d} \right) \psi(d) d^{k-1}.
		\]
		
		\item \label{item:prop:Eisenstein_period_map:cusp} 
		For each cusp $ a/c $ with coprime integers $ a \in \Z $ and $ c \in \Z_{>0} $, we have
		\[
		a_{E_k^{\chi, \psi}, 0}^{(a/c)}
		=
		\frac{(-N)^{k-1}}{k}
		\sum_{l, m \in \Z/N\Z} \chi(-cl) \widehat{\psi} (al)
		\bm{e} \left( -\frac{lm}{N} \right) \widetilde{B}_k \left( \frac{m}{N} \right).
		\]
	\end{enumerate}
\end{prop}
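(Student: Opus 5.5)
Both parts follow by linearity from \zcref{prop:Eisenstein_vector}, since $E_k^{\chi,\psi}$ is by \zcref{dfn:Eisenstein_period_map} a finite linear combination of the $E_k^v$. The work is bookkeeping: collapse the resulting sums over $v=(c_v,d_v)$ with the discrete Fourier expansion (\zcref{lem:discrete_Fourier_expansion}).

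For \zcref{item:prop:Eisenstein_period_map:Fourier_exp}, I insert the expansion of \zcref{prop:Eisenstein_vector} \zcref{item:prop:Eisenstein_vector:Fourier_exp} and treat the constant and non-constant terms separately.

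\emph{Constant term.} The indicator $\bm{1}_{N\Z}(c_v)$ kills every $v$ except $c_v=0$, which leaves the factor $\chi(0)$. By \zcref{lem:discrete_Fourier_expansion},
\[
\sum_{d_v}\widehat{\psi}(d_v)\,\bm{e}(-d_vm/N)=\psi(-m).
\]
Substituting $m\mapsto -m$ and using $\widetilde{B}_k(-\alpha)=(-1)^k\widetilde{B}_k(\alpha)$ (from \zcref{lem:Bernoullli_poly_sym}) turns this into a sum of $\psi(m)\widetilde{B}_k(m/N)$. I need to track the parity sign $(-1)^k$ here and check that it matches the normalization in the statement.

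\emph{Non-constant terms.} In $\sum_v\chi(c_v)\widehat{\psi}(d_v)\sigma^v_{k-1}(n)$ I interchange the sum over $v$ with the sum over divisors $d\mid n$. The congruence $n/d\equiv c_v \bmod N$ selects $\chi(n/d)$. The sum $\sum_{d_v}\widehat{\psi}(d_v)\bm{e}(d_vd/N)$ equals $\psi(d)$. Together these give $\sigma^{\chi,\psi}_{k-1}(n)$. The same computation applied to $\sigma^{-v}_{k-1}$ produces $\chi(-n/d)\psi(-d)$, that is, $\sigma^{\chi^-,\psi^-}_{k-1}(n)$.

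For \zcref{item:prop:Eisenstein_period_map:cusp}, I choose $\delta=\smat{a&b\\c&d}\in\SL_2(\Z)$ with $\delta(\iu\infty)=a/c$. By \zcref{prop:Eisenstein_vector} \zcref{item:prop:Eisenstein_vector:cusp},
\[
\eval{E_k^{\chi,\psi}}_k\delta=\sum_v\chi(c_v)\widehat{\psi}(d_v)E_k^{v\delta}.
\]
Rather than expanding the indicator $\bm{1}_{N\Z}(ac_v+cd_v)$ with characters, which gives a less convenient form, I reindex by $w=v\delta$. This is a bijection of $\Z^2/N\Z^2$, with inverse $v=w\delta^{-1}$ where $\delta^{-1}=\smat{d&-b\\-c&a}$.

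Only $w$ with first coordinate $\equiv0$ contribute a constant term. Writing $w=(0,l)$ gives $v=(-cl,al)$, so the constant term is
\[
\sum_l\chi(-cl)\,\widehat{\psi}(al)\cdot\frac{(-N)^{k-1}}{k}\sum_m\bm{e}(-lm/N)\,\widetilde{B}_k(m/N),
\]
which is the claimed formula.

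The main obstacle is sign and convention bookkeeping rather than any real difficulty. Three places need care: the $(-1)^k$ in the constant term of part (1), the correct direction of the right action $v\mapsto v\delta$ when inverting, and confirming that the constant term at the cusp is taken with the same $\delta$ normalization as the definition of $a_0^{(x)}$.
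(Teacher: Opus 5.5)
Your route is the paper's: part (1) by linearity from the Fourier expansion of $E_k^v$ plus discrete Fourier inversion, and part (2) from $E_k^v|_k\delta=E_k^{v\delta}$ together with the bijection $l\mapsto(-cl,al)$ onto $\{v : ac_v+cd_v\equiv 0 \bmod N\}$. You recover that bijection correctly as $v=(0,l)\delta^{-1}$. Your divisor-sum computation for the non-constant terms is right. Two of your three concerns are handled correctly:
\begin{itemize}
\item $v\delta=(ac_v+cd_v,\,bc_v+dd_v)$, and its inversion, are right.
\item $\delta=\smat{a & b \\ c & d}$ with $c>0$ is exactly the normalization defining $a_0^{(a/c)}$. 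Another choice of $b,d$ multiplies $\delta$ on the right by some $\smat{1 & n \\ 0 & 1}$, which does not change the constant term. This is consistent with the answer not involving $b,d$.
\end{itemize}

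The remaining concern, the parity sign you left open in part (1), does not resolve in favour of the statement. Fourier inversion gives $\sum_{d_v}\widehat{\psi}(d_v)\bm{e}(-d_vm/N)=\psi(-m)$. So the constant term is $\frac{(-N)^{k-1}}{k}\chi(0)\sum_m\psi(-m)\widetilde{B}_k(m/N)$. Since $\widetilde{B}_k(-\alpha)=(-1)^k\widetilde{B}_k(\alpha)$ for all real $\alpha$ when $k\ge 2$, this is $(-1)^k$ times the printed expression.

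For odd $k$ this is a genuine discrepancy, not a convention. Take $N=k=3$, $\chi=\bm{1}_{\{0\}}$, $\psi(0)=0$, $\psi(\pm1)=\pm1$. Then $B_3(1/3)=1/27$ and $B_3(2/3)=-1/27$. The true constant term is $3\bigl(-B_3(1/3)+B_3(2/3)\bigr)=-2/9$, which a direct evaluation of $\sum_{d_v}\widehat{\psi}(d_v)\,\mathrm{CT}(E_3^{(0,d_v)})$ confirms. The printed formula gives $+2/9$.

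Your own part (2) formula corroborates this. Evaluated formally at $\delta=\mathrm{id}$ ($a=1$, $c=0$), it returns $\chi(0)\sum_m\psi(-m)\widetilde{B}_k(m/N)$. So the step ``check that it matches'' cannot be completed for odd $k$. Part (1) as printed holds only for even $k$, which covers the paper's later use $k=g+2$. In general $\psi(m)$ must be replaced by $\psi(-m)$, and your write-up should state this correction explicitly rather than defer it.

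There is also a caveat you share with the paper. For $k=2$ the cited rule $E_2^v|_2\gamma=E_2^{v\gamma}$ holds only up to the term $-\frac{1}{2\pi\iu}\frac{c}{c\tau+d}$, because the individual $E_2^v$ are only quasimodular. So in part (2) with $k=2$ you need $\widehat{\chi}(0)\psi(0)=0$ for $E_2^{\chi,\psi}$ to be modular. Otherwise you must read the constant term as the limit at $\iu\infty$.
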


\begin{proof}
	\zcref{item:prop:Eisenstein_period_map:Fourier_exp} follows from \zcref{prop:Eisenstein_vector} \zcref{item:prop:Eisenstein_vector:Fourier_exp,lem:discrete_Fourier_expansion}.
	
	\zcref{item:prop:Eisenstein_period_map:cusp} follows from \zcref{prop:Eisenstein_vector} \zcref{item:prop:Eisenstein_period_map:cusp} and the fact that for any $ \gamma = \smat{a & b \\ c & d} \in \SL_2 (\Z) $, 
	\[
	\begin{array}{ccc}
		\{ v = (c_v, d_v) \in \Z^2 / N\Z^2 \mid a c_v + c d_v \equiv 0 \bmod N \} & \longrightarrow & \Z / N\Z \\
		v = (c_v, d_v) & \longmapsto & b c_v + d d_v \\
		(-cl, al) & \text{\reflectbox{$ \longmapsto $}} & l
	\end{array}
	\]
	is bijective.
\end{proof}

\begin{rem}
	For a congruence subgroup $ \Gamma \subset \SL_2(\Z) $, we define the \emph{weight $ k $ Eisenstein space of $ \Gamma $} as
	\[
	\calE_k(\Gamma)
	\coloneqq
	\left\{
	f \in M_k(\Gamma)
	\relmiddle|
	\sprod{f, g} = 0 \text{ for any } g \in S_k(\Gamma)
	\right\},
	\]
	where we denote by $ M_k(\Gamma) $ and $ S_k(\Gamma) $ the spaces of weight $ k $ modular forms and cusp forms of $ \Gamma $ respectively, let $ \tau = u + \iu y $ and we define the Petersson inner product as
	\[
	\sprod{f,g}
	\coloneqq
	\frac{1}{\vol(\Gamma(N) \backslash \bbH)}
	\int_{\Gamma(N) \backslash \bbH} f(\tau) \overline{g(\tau)} y^k \frac{du dy}{y^2}.
	\]
	Then, by \cite[Theorem 4.2.3, 4.5.2, and 4.6.2 and Section 5.11]{Diamond-Shurman}, we have
	\begin{align}
		\calE_k(\Gamma(N)) 
		&=
		\sprod{ E_k^v (\tau) \relmiddle| v \in \Z^2/N\Z^2 }
		=
		\sprod{ E_k^{\chi, \psi} (\tau) \relmiddle| \chi, \psi \colon \Z/N\Z \to \bbC },
		\\
		\calE_k(\Gamma_1(N)) 
		&=
		\sprod{ E_k^{\chi, \psi} (\tau) \relmiddle| \chi, \psi \colon \Z/N\Z \to \bbC \text{ are Dirichlet characters}},
		\\
		\calE_k(\Gamma_0(N)) 
		&=
		\sprod{ E_k^{\chi, \psi} (\tau) \relmiddle| \chi, \psi \colon \Z/N\Z \to \bbC \text{ are Dirichlet characters such that } 
			\chi \psi = \bm{1}_{(\Z/N\Z)^\times} },
	\end{align}
	where $ \bm{1}_{(\Z/N\Z)^\times}^{} (m) $ is the characteristic function of $ (\Z/N\Z)^\times $.
	
	We also remark that $ \calE_k(\Gamma(N)) \neq 0 $ if and only if either $ k $ is even or $ N \ge 3 $ by \cite[Equation (4.3)]{Diamond-Shurman}.
\end{rem}

 As before, we fix a rational number $ x = r/p \in \Q $ with coprime integers $ r \in \Z $ and $ p \in \Z_{>0} $.

\begin{prop} \label{prop:Eisenstein_period_map_L-func}
	Let $ \chi, \psi \colon \Z/N\Z \to \bbC $ be periodic maps.
	\begin{enumerate}
		\item \label{item:prop:Eisenstein_period_map_L-func:rep}
		We have
		\begin{align}
			&\phant
			L_{E_k^{\chi, \psi}}(s; x)
			\\
			&=
			N^{-s + k -1} p^{-2s + k -1} \sum_{1 \le m, n \le Np} 
			\left( \chi(m) \psi(n) + (-1)^k \chi(-m) \psi(-n) \right)
			\bm{e} \left( \frac{mnx}{N} \right) 
			\zeta \left( s-k+1; \frac{m}{Np} \right) \zeta \left( s; \frac{n}{Np} \right).
		\end{align}
		
		\item \label{item:prop:Eisenstein_period_map_L-func:special_value}
		For any $ 1 \le j \le k-1 $, we have
		\[
		\widehat{L}_{E_k^{\chi, \psi}}(j; x)
		=
		\widehat{L}_k^{\chi, \psi} (j; x) + \widehat{L}_k^{\chi, \psi, *} (j; x)
		\]
		where
		\begin{align}
			\widehat{L}_k^{\chi, \psi} (j; x)
			&\coloneqq
			-\frac{\iu^j N^{k-j-1}}{2^j (k-j)} p^{k-2j-1}
			\sum_{1 \le m, n \le Np-1} 
			\chi(m) \psi(n) 
			\bm{e} \left( \frac{mnx}{N} \right)
			B_{k-j} \left( \frac{m}{Np} \right)
			\cot^{(j-1)} \left( \frac{\pi n}{Np} \right),
			\\
			\widehat{L}_k^{\chi, \psi, *} (j; x)
			&\coloneqq
			-\frac{N^{k-j-1}}{(2\pi\iu)^j} \frac{(j-1)!}{k-j} p^{k-2j-1}
			\left(
			\vphantom{\sum_{1 \le n \le Np-1}}
			((-1)^j + (-1)^{k-j}) \zeta(j) \chi(0) \psi(0) B_{k-j}
			\right.
			\\
			&\phant
			-(1+(-1)^j) \iu^k \frac{2^{2j - k}}{(Np)^{k-j-1}} \frac{k-j}{j!} B_j \pi^j \psi(0)
			\sum_{1 \le l \le Np-1} \widehat{\chi}(l) 
			\cot^{(k-j-1)} \left( \frac{\pi l}{Np} \right) 
			\\
			&\phant
			+ (1+(-1)^j) \frac{(-1)^{j/2 - 1} 2^j B_j \pi^j}{j!}
			\left( - \chi(0) + \left( \frac{1}{N^{k-j-1}} - 1 \right) \widehat{\chi} (0) \right) \psi(0) B_{k-j}
			&\phant
			\\
			&\phant
			\left.
			+(-1)^{j} B_{k-j} \chi(0)
			\sum_{1 \le n \le Np-1} 
			\left( (-1)^k \psi(n)  + \psi(-n) \right)
			\zeta \left( j; \frac{n}{Np} \right)
			+ \frac{\delta_{j, 1}}{2} \left( \chi(0) - \widehat{\chi}(0) \right) \psi(0).
			\right).
		\end{align}
		Here, $ L_k^{\chi, \psi, *} (j; x) = 0 $ if $ \chi(0) = \psi(0) = 0 $.
		
		\item \label{item:prop:Eisenstein_period_map_L-func:other_rep}
		We have
		\begin{align}
			\widehat{L}_k^{\chi, \psi} (j; x)
			&=
			-\frac{N^{k-1}}{j(k-j)} p^{k-j-1}
			\sum_{1 \le m \le Np-1} 
			\chi(m) 
			B_{k-j} \left( \frac{m}{Np} \right)
			\widetilde{B}_j^{\widehat{\psi}} \left( \frac{mx}{N} \right)
			\\
			&\phant
			+ \frac{(j-1)!}{(2\pi\iu)^j} \frac{N^{k-1-j}}{k-j} p^{k-2j-1}
			(1 + (-1)^k) \psi(0) \zeta(k-1)
			\\
			&\phant
			- \frac{\delta_{j, 1}}{2} \frac{(j-1)!}{(2\pi\iu)^j} \frac{N^{k-1-j} p^{k-1-2j}}{k-j} 
			\sum_{1 \le m \le N-1} 
			\chi(pm) \widehat{\psi}(rm) 
			B_{k-j} \left( \frac{m}{N} \right)
			\\
			&=
			-(-1)^j \iu^{-k} 2^{-k} \frac{1}{p^j}
			\sum_{\substack{
					1 \le n \le Np-1, \\
					p \nmid n
			}} 
			\psi(n)
			\cot_{\widehat{\chi}}^{(k-j-1)} \left( \frac{\pi nx}{N} \right)
			\cot^{(j-1)} \left( \frac{\pi n}{Np} \right)
			\\
			&\phant
			+ \frac{\iu^j N^{k-j-1}}{2^j (k-j)} p^{k-2j-1}
			\chi(0) \left( B_{k-j} + \frac{\delta_{j, 1}}{2} \right)
			\sum_{1 \le n \le Np-1} 
			\psi(n) \cot^{(j-1)} \left( \frac{\pi n}{Np} \right)
			\\
			&\phant 
			+ \left( \frac{1}{(Np)^{k-j-1}} - 1 \right) B_{k-j}
			\sum_{1 \le n \le N-1} 
			\widehat{\chi} (rn) \psi(pn)
			\cot^{(j-1)} \left( \frac{\pi n}{N} \right),
		\end{align}
		where, for a periodic map $ \varphi \colon \Z/N\Z \to \bbC $, we define
		\begin{align}
			\widetilde{B}_j^\varphi (z)
			&\coloneqq
			\sum_{m \in \Z/N\Z} \varphi(m) 
			\widetilde{B}_j \left( z + \frac{m}{N} \right),
			\\
			\cot_{\varphi}^{} (z)
			&\coloneqq
			\sum_{m \in \Z/N\Z} \varphi(m) 
			\cot \left( z + \frac{\pi m}{N} \right).
		\end{align}
		
		In particular, if $ \chi(0) = \psi(0) = 0 $ and there exists $ \varepsilon, \varepsilon' \in \{ \pm 1 \} $ such that 
		$ \chi(-m) = \varepsilon \chi(m) $ and $ \psi(-m) = \varepsilon' \psi(m) $ for any $ m \in \Z/N\Z $, then we have
		\begin{alignat}{2}
			\widehat{L}_{E_k^{\chi, \psi}}(j; x)
			&=
			-\frac{N^{k-1}}{j(k-j)} p^{k-j-1}
			\sum_{1 \le m \le Np-1} 
			\chi(m) 
			B_{k-j} \left( \frac{m}{Np} \right)
			\widetilde{B}_{j}^{\widehat{\psi}} \left( \frac{mx}{N} \right)
			\\
			&=
			-(-1)^j \iu^{-k} 2^{-k} \frac{1}{p^j}
			\sum_{\substack{
					1 \le n \le Np-1, \\
					p \nmid n
			}} 
			\psi(n)
			\cot_{\widehat{\chi}}^{(k-j-1)} \left( \frac{\pi nx}{N} \right)
			\cot^{(j-1)} \left( \frac{\pi n}{Np} \right)
			& & \quad \text{ if } \varepsilon \varepsilon' = (-1)^{j-1}, 
			\\
			S_{E_k^{\chi, \psi}}(x)
			&=
			-\frac{N^{k-1}}{k-1}
			\sum_{1 \le m \le Np-1} \chi(m) 
			B_{1} \left( \frac{m}{Np} \right)
			\widetilde{B}_{k-1}^{\widehat{\psi}} \left( \frac{mx}{N} \right)
			\\
			&=
			\left( \frac{\iu}{2} \right)^k \frac{1}{p^{k-1}}
			\sum_{\substack{
					1 \le n \le Np-1, \\
					p \nmid n
			}} 
			\psi(n)
			\cot_{\widehat{\chi}} \left( \frac{\pi nx}{N} \right)
			\cot^{(k-2)} \left( \frac{\pi n}{Np} \right)
			& & \quad \text{ if } \varepsilon \varepsilon' = (-1)^{k}.
		\end{alignat}
	\end{enumerate}
\end{prop}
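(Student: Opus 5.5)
Part (1) is a direct Dirichlet-series computation. By \zcref{prop:Eisenstein_period_map} \zcref{item:prop:Eisenstein_period_map:Fourier_exp}, $a_n = \sigma_{k-1}^{\chi,\psi}(n) + (-1)^k\sigma_{k-1}^{\chi^-,\psi^-}(n)$. I will write $n = m'n'$ with $m' = n/d$ and $n' = d$, and then split $m' \equiv m$ and $n' \equiv n \pmod{Np}$ with $1 \le m,n \le Np$. The twist $\bm{e}(m'n'x/N)$ depends only on $m'n' \bmod Np\cdot N/\gcd$; concretely $\bm{e}(m'n'r/(Np))$ depends only on the residues mod $Np$. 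Each residue class then contributes $\sum_{m'\equiv m} m'^{-(s-k+1)} \sum_{n'\equiv n} n'^{-s}$, which equals $(Np)^{-(s-k+1)}\zeta(s-k+1; m/Np)\,(Np)^{-s}\zeta(s;n/Np)$. Together with the prefactor $N^s$ in $L_f$, this gives the stated power $N^{-s+k-1}p^{-2s+k-1}$.

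For (2), I will specialize (1) at $s = j$, $1 \le j \le k-1$. The first zeta factor is evaluated at the nonpositive integer $j-k+1$, so \zcref{lem:Hurwitz_zeta_special_value} applies and gives $-B_{k-j}(m/Np)/(k-j)$. The second factor $\zeta(j; n/Np)$ is handled by the discrete Fourier expansion in the $n$-variable. Summing $\bm{e}(mnx/N)\zeta(j;n/Np)$ over $n$ in a residue class should produce the periodic zeta, i.e. the Lipschitz-type identity underlying \zcref{lem:periodic_Bernoullli_poly_disc_Fourier_exp}. Pairing this with the $(-1)^k\chi(-m)\psi(-n)$ term turns $\zeta(j;\alpha) + (-1)^j\zeta(j;1-\alpha)$ into $\pi^j$ times $\cot^{(j-1)}(\pi\alpha)$ up to constants. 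This yields the main term $\widehat{L}_k^{\chi,\psi}$.

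The boundary terms, namely $m = Np$ or $n = Np$ (residue $0$), the case $j = 1$ (conditional convergence, producing $\delta_{j,1}$ terms), and the $\zeta(j)$ term from $n \equiv 0$, assemble into $\widehat{L}^{*}$. Each of these carries a factor $\chi(0)$ or $\psi(0)$, which gives the vanishing statement. I expect this bookkeeping, with its many constants, to be the main obstacle. I will handle it by treating separately the four cases $m \equiv 0$ or not and $n \equiv 0$ or not, and for even $j$ using $\zeta(j) = -(2\pi i)^jB_j/(2\,j!)$.

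For (3), I will expand $\cot^{(j-1)}(\pi n/Np)$ via \zcref{lem:periodic_Bernoullli_poly_disc_Fourier_exp}, with modulus $Np$ in place of $N$, as a discrete Fourier sum of $\widetilde{B}_j(\cdot/Np)$. Swapping sums and performing the $n$-sum against $\psi(n)\bm{e}(mnx/N)$ via \zcref{lem:discrete_Fourier_expansion} produces $\widetilde{B}_j^{\widehat\psi}(mx/N)$, and the constant terms $B_j/(Np)^j$ and $\delta_{j,1}$ give the listed corrections. For the second (cotangent) form I will do the dual computation: expand $B_{k-j}(m/Np)$ through the same lemma in the $m$-variable as a cotangent-derivative sum. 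The sum over $m$ against $\chi(m)\bm{e}(mnr/(Np))$ then gives $\cot_{\widehat\chi}^{(k-j-1)}(\pi nx/N)$, with the terms $p \mid n$ separated off into the last line.

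For the ``in particular'' statement, with $\chi(0) = \psi(0) = 0$ the starred and boundary terms vanish. Parity $\varepsilon\varepsilon' = (-1)^{j-1}$ makes the $(-1)^k$-reflected piece equal to the main piece rather than cancel it, which accounts for the factor. Setting $j = k-1$ and $\cot^{(0)}$ gives the formula for $S_{E_k^{\chi,\psi}}$ with $\varepsilon\varepsilon' = (-1)^k$.
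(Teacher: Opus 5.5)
\textbf{Verdict: same route as the paper, but the first computation contains a slip (shared by the paper's proof) that makes (1)--(3) false as printed for general $\chi\neq\psi$.}

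Your plan follows the paper's proof essentially step by step.
\begin{itemize}
\item For (1): residue classes mod $Np$ and Hurwitz zeta functions.
\item For (2): $\zeta(j-k+1;\alpha)=-B_{k-j}(\alpha)/(k-j)$; then the reflection $m\mapsto Np-m$, $n\mapsto Np-n$, which turns $\zeta(j;\alpha)+(-1)^j\zeta(j;1-\alpha)=\sum_{l\in\Z}(l+\alpha)^{-j}$ into a multiple of $\cot^{(j-1)}(\pi\alpha)$; then four cases according to whether $m$ or $n$ equals $Np$.
\item For the cotangent form of (3): the discrete Fourier expansion of $B_{k-j}(m/Np)$ in $m$.
\item For the first form of (3), the paper rewrites the $n$-sum against $\sum_{l\in\Z}(l+n/Np)^{-j}$ as a sum over all nonzero integers and uses the infinite Fourier series of $\widetilde{B}_j$. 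Your inversion of the finite identity at modulus $Np$ is equivalent.
\end{itemize}

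\textbf{The gap in (1).} With $m'=n/d$ and $n'=d$, the general term of $\sum_n \sigma_{k-1}^{\chi,\psi}(n)n^{-s}\bm{e}(nx/N)$ is $\chi(m')\psi(n')\,n'^{\,k-1}(m'n')^{-s}\bm{e}(m'n'x/N)$. So the weight $d^{k-1}$ sits on the $\psi$-variable. The class sums are therefore $(Np)^{-s}\zeta(s;m/Np)$ for $m'$ and $(Np)^{k-1-s}\zeta(s-k+1;n/Np)$ for $n'$, not the other way round.

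What one actually proves is (1) with $\chi$ and $\psi$ interchanged on the right-hand side. The printed formula can fail when $\chi\neq\psi$. Take $N\ge2$, $k$ even, $\chi=\bm{1}_{N\Z}$, $\psi\equiv1$ and $x=0$ (so $p=1$). Then the left side is $2\zeta(s)\zeta(s-k+1)$, while the stated right side is $2N^{k-1}\zeta(s)\zeta(s-k+1)$.

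The paper's own proof makes the identical relabelling slip in the last line of its proof of (1). Consequently (2), (3), and the explicit trigonometric expression for $S_{E_k^{\chi,\psi}}$ in the Eisenstein corollary hold only after swapping $\chi$ and $\psi$ (equivalently, they are formulas for $E_k^{\psi,\chi}$). This is harmless for $\chi=\psi=\bm{1}_2$, the only case used for $S_g^{\odd}$ and the signature. Still, you should prove the corrected identity rather than put the shift on $m'$.

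\textbf{Smaller points.}
\begin{itemize}
\item In the ``in particular'' part there is no factor to account for. In (2) the $(-1)^k\chi(-m)\psi(-n)$ half has already been absorbed, via the reflection, into the lattice sum that produces $\cot^{(j-1)}$.
\item The hypothesis $\chi(0)=\psi(0)=0$ removes $\widehat{L}_k^{\chi,\psi,*}$ and the lines of (3) carrying $\chi(0)$ or $\psi(0)$. It does not remove two other lines, which involve neither:
\begin{itemize}
\item the $\delta_{j,1}$ line containing $\sum_{1\le m\le N-1}\chi(pm)\widehat{\psi}(rm)B_{k-j}(m/N)$;
\item the last line containing $\sum_{1\le n\le N-1}\widehat{\chi}(rn)\psi(pn)\cot^{(j-1)}(\pi n/N)$.
\end{itemize}
It is the parity hypothesis that kills them. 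Replacing the summation index $t$ by $N-t$ multiplies the latter by $\varepsilon\varepsilon'(-1)^j=-1$. It multiplies the former by $\varepsilon\varepsilon'(-1)^{k-1}$, which is $-1$ in the non-degenerate case $\varepsilon\varepsilon'=(-1)^k$ (otherwise all $a_n$ vanish and $E_k^{\chi,\psi}=0$).
\item At $j=1$ the values $\zeta(1;n/Np)$ and $\zeta(1)$ are poles, not conditionally convergent series. So (2) at $j=1$ must be obtained as the limit $s\to1$ in (1). There only combinations such as $\zeta(s;\alpha)-\zeta(s;1-\alpha)\to\pi\cot(\pi\alpha)$ stay finite individually.
\end{itemize}
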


\begin{proof}
	\zcref{item:prop:Eisenstein_period_map_L-func:rep} 
	We have
	\begin{align}
		&\phant
		L_{E_k^{\chi, \psi}}(s; x)
		\\
		&=
		N^s
		\sum_{n=1}^\infty \frac{\sigma_{k-1}^{\chi, \psi} (n) + (-1)^k \sigma_{k-1}^{\chi^-, \psi^-} (n)}{n^s} \bm{e} \left( \frac{nx}{N} \right)
		\\
		&=
		N^s
		\sum_{d, m=1}^\infty \frac{d^{k-1} ( \chi(m) \psi(d) + (-1)^k \chi(-m) \psi(-d) )}{(dm)^s} \bm{e} \left( \frac{dmx}{N} \right)
		\\
		&=
		N^{-s + k -1} p^{-2s + k -1} \sum_{1 \le m, n \le Np} 
		\left( \chi(m) \psi(n) + (-1)^k \chi(-m) \psi(-n) \right)
		\bm{e} \left( \frac{mnx}{N} \right) 
		\zeta \left( s-k+1; \frac{m}{Np} \right) \zeta \left( s; \frac{n}{Np} \right).
	\end{align}
	
	\zcref{item:prop:Eisenstein_period_map_L-func:special_value}
	By \zcref{item:prop:Eisenstein_period_map_L-func:rep,lem:Hurwitz_zeta_special_value}, we have 
	\begin{align}
		&\phant
		\widehat{L}_{E_k^{\chi, \psi}}(j; x)
		=
		-\frac{(j-1)!}{(2\pi\iu)^j} L_{E_k^{\chi, \psi}}(j; x)
		\\
		&=
		\frac{N^{k-1-j} p^{k-1-2j}}{(2\pi\iu)^j} \frac{(j-1)!}{k-j} 
		\sum_{1 \le m, n \le Np} 
		\left( \chi(m) \psi(n) + (-1)^k \chi(-m) \psi(-n) \right)
		\bm{e} \left( \frac{mnx}{N} \right) 
		B_{k-j} \left( \frac{m}{Np} \right) \zeta \left( j; \frac{n}{Np} \right).
	\end{align}
	We apply a transformation
	\[
	m \mapsto
	\begin{cases}
		Np - m & \text{ if } 1 \le m \le Np - 1, \\
		Np & \text{ if } m = Np, 
	\end{cases}
	\quad
	n \mapsto
	\begin{cases}
		Np - n & \text{ if } 1 \le n \le Np - 1, \\
		Np & \text{ if } n = Np.
	\end{cases}
	\]
	By this transformation, we have
	\begin{alignat}{2}
		\chi(-m) \psi(-n) &\mapsto \chi(m) \psi(n),
		& \quad
		\bm{e} \left( -\frac{mnx}{N} \right) &\mapsto \bm{e} \left( -\frac{mnx}{N} \right),
		\\
		B_{k-j} \left( \frac{m}{Np} \right) &\mapsto
		(-1)^{(k-j)(\delta_{m, Np} + 1)} B_{k-j} \left( \frac{m}{Np} \right),
		& \quad
		\zeta \left( j; \frac{n}{Np} \right)
		&\mapsto
		\zeta \left( j; 1 - \frac{n}{Np} + \delta_{n, Np} \right).
	\end{alignat}
	Thus, we obtain
	\[
	L_{E_k^{\chi, \psi}}(j; x)
	=
	\frac{N^{k-1-j} p^{k-1-2j}}{(2\pi\iu)^j} \frac{(j-1)!}{k-j} \sum_{1 \le m, n \le Np} 
	\chi(m) \psi(n) \bm{e} \left( \frac{mnx}{N} \right)
	B_{k-j} \left( \frac{m}{Np} \right) Z_{m, n},
	\]
	where
	\[
	Z_{m, n} \coloneqq
	 \zeta \left( j; \frac{n}{Np} \right)
	 + (-1)^{k + (k-j)(\delta_{m, Np} + 1)} \zeta \left( j; 1 - \frac{n}{Np} + \delta_{n, Np} \right).
	\]
	For $ e, e' \in \{ 0, 1 \} $, let
	\[
	L_{e, e'} \coloneqq
	\sum_{m \in S_e} \sum_{n \in S_{e'}} 
	\chi(m) \psi(n) \bm{e} \left( \frac{mnx}{N} \right)
	B_{k-j} \left( \frac{m}{Np} \right) Z_{m, n},
	\]
	where $ S_0 \coloneqq \{ Np \} $ and $ S_1 \coloneqq \{ 1, \dots, Np-1 \} $.
	
	Since $ Z_{Np, Np} = (1 + (-1)^k) \zeta(j) $, we have
	\[
	L_{0, 0} =
	((-1)^j + (-1)^{k-j}) \zeta(j) \chi(0) \psi(0) B_{k-j}.
	\]
	
	For $ 1 \le m \le Np-1 $, we have
	\[
	Z_{m, Np} =
	\zeta(j) + (-1)^j \zeta(j)
	=
	(1+(-1)^j) \frac{(-1)^{j/2 - 1} 2^j B_j \pi^j}{j!}.
	\]
	By \zcref{lem:periodic_Bernoullli_poly_disc_Fourier_exp}, we have
	\begin{align}
		&\phant
		\sum_{1 \le m \le Np-1} \chi(m) B_{k-j} \left( \frac{m}{Np} \right)
		\\
		&=
		\sum_{1 \le m \le Np-1} \sum_{l \in \Z/Np\Z} 
		\widehat{\chi}(l) B_{k-j} \left( \frac{m}{Np} \right) 
		\bm{e} \left( \frac{lm}{Np} \right)
		\\
		&=
		\sum_{1 \le l \le Np-1} \widehat{\chi}(l) 
		\left(
			\frac{k-j}{(2\iu)^{k-j} (Np)^{k-j-1}}
			\cot^{(k-j-1)} \left( \frac{\pi l}{Np} \right) 
			- B_{k-j} - \frac{\delta_{j, 1}}{2}
		\right)
		+ \left( \frac{1}{N^{k-j-1}} - 1 \right) \widehat{\chi} (0) B_{k-j}
		\\
		&=
		\frac{k-j}{(2\iu)^{k-j} (Np)^{k-j-1}}
		\sum_{1 \le l \le Np-1} \widehat{\chi}(l) 
		\cot^{(k-j-1)} \left( \frac{\pi l}{Np} \right) 
		+ \left( - \chi(0) + \left( \frac{1}{N^{k-j-1}} - 1 \right) \widehat{\chi} (0) \right) B_{k-j}
		\\
		&\phant
		+ \frac{\delta_{j, 1}}{2} \left( \widehat{\chi}(0) - \chi(0) \right).
	\end{align}
	Thus, we have
	\begin{align}
		L_{1, 0} &=
		-(1+(-1)^j) \iu^k \frac{2^{2j - k}}{(Np)^{k-j-1}} \frac{k-j}{j!} B_j \pi^j \psi(0)
		\sum_{1 \le l \le Np-1} \widehat{\chi}(l) 
		\cot^{(k-j-1)} \left( \frac{\pi l}{Np} \right) 
		\\
		&\phant
		+ (1+(-1)^j) \frac{(-1)^{j/2 - 1} 2^j B_j \pi^j}{j!}
		\left( - \chi(0) + \left( \frac{1}{N^{k-j-1}} - 1 \right) \widehat{\chi} (0) \right) \psi(0) B_{k-j}
		+ \frac{\delta_{j, 1}}{2} \left( \widehat{\chi}(0) - \chi(0) \right) \psi(0).
	\end{align}
	
	For $ 1 \le n \le Np-1 $, we have
	\[
	Z_{Np, n} =
	\zeta \left( j; \frac{n}{Np} \right) + (-1)^k \zeta \left( j; 1 - \frac{n}{Np} \right).
	\]
	Thus, we have
	\begin{align}
		L_{0, 1} &=
		(-1)^{k-j} B_{k-j} \chi(0)
		\sum_{1 \le n \le Np-1} \psi(n) 
		\left( \zeta \left( j; \frac{n}{Np} \right) + (-1)^k \zeta \left( j; 1 - \frac{n}{Np} \right) \right)
		\\
		&=
		(-1)^{j} B_{k-j} \chi(0)
		\sum_{1 \le n \le Np-1} 
		\left( (-1)^k \psi(n)  + \psi(-n) \right)
		\zeta \left( j; \frac{n}{Np} \right).
	\end{align}
	
	For $ 1 \le m, n \le Np - 1 $, we have
	\begin{align}
		Z_{m, n} &=
		\zeta \left( j; \frac{n}{Np} \right) + (-1)^{j} \zeta \left( j; 1 - \frac{n}{Np} \right)
		=
		\sum_{l \in \Z} \frac{1}{(l + n/Np)^j}
		=
		\frac{(-1)^{j+1} \pi^j}{(j-1)!}
		\cot^{(j-1)} \left( \frac{\pi n}{Np} \right).
	\end{align}
	Thus, we obtain the claim.
	
	\zcref{item:prop:Eisenstein_period_map_L-func:other_rep}
	By \zcref{lem:periodic_Bernoullli_poly_Fourier_exp}, we have
	\begin{align}
		&\phant
		\sum_{1 \le n \le Np-1} \psi(n) \bm{e} \left( \frac{mnx}{N} \right) Z_{m, n}
		\\
		&=
		\sum_{1 \le n \le Np-1} \psi(n) \bm{e} \left( \frac{mnx}{N} \right)
		\sum_{l \in \Z} \frac{1}{(l + n/Np)^j}
		\\
		&=
		\sum_{l \in \Z \smallsetminus \{ 0 \}} 
		\left(
			\frac{\psi(l)}{(l/Np)^j} \bm{e} \left( \frac{mlx}{N} \right)
			- \frac{\psi(0)}{l^j}
		\right)
		\\
		&=
		(Np)^j \sum_{l \in \Z \smallsetminus \{ 0 \}} \sum_{n' \in \Z/N\Z}
		\frac{\widehat{\psi}(n')}{l^j} \bm{e} \left( \frac{ln'}{N} + \frac{mlx}{N} \right)
		- (1 + (-1)^j) \psi(0) \zeta(j)
		\\
		&=
		-\frac{(2\pi\iu Np)^j}{j!} \sum_{n' \in \Z/N\Z} \widehat{\psi}(n')
		\widetilde{B}_j \left( \frac{n' + mx}{N} \right)
		- (1 + (-1)^j) \psi(0) \zeta(j)
		- \frac{\delta_{j, 1}}{2} \bm{1}_{p\Z} (m) \widehat{\psi} (mx).
	\end{align}
	Thus, we obtain the first equality.
	
	By \zcref{lem:periodic_Bernoullli_poly_disc_Fourier_exp}, we have
	\begin{align}
		&\phant
		\sum_{1 \le m \le Np-1} 
		\chi(m) \bm{e} \left( \frac{mnx}{N} \right)
		B_{k-j} \left( \frac{m}{Np} \right)
		\\
		&=
		\sum_{1 \le m \le Np-1} \sum_{m' \in \Z/N\Z}
		\widehat{\chi}(m')
		B_{k-j} \left( \frac{m}{Np} \right)
		\bm{e} \left( \frac{m (m' + nx)}{N} \right)
		\\
		&=
		\sum_{\substack{
				m' \in \Z/N\Z, \\
				m' - nx \notin N \Z
		}}
		\widehat{\chi}(m')
		\left(
			\frac{k-j}{(2\iu)^{k-j} (Np)^{k-j-1}} \cot^{(k-j-1)} \left( \frac{\pi (m' + nx)}{N} \right)
			- B_{k-j} - \frac{\delta_{j, 1}}{2} 
		\right)
		\\
		&\phant
		+ \bm{1}_{p\Z} (n) \widehat{\chi} (nx) \left( \frac{1}{(Np)^{k-j-1}} - 1 \right) B_{k-j} 
		\\
		&=
		\frac{k-j}{(2\iu)^{k-j} (Np)^{k-j-1}}
		\sum_{\substack{
				m' \in \Z/N\Z, \\
				m' - nx \notin N \Z
		}}
		\widehat{\chi}(m')
		\cot^{(k-j-1)} \left( \frac{\pi  (m' - nx)}{N} \right)
		\\
		&\phant
		- B_{k-j} \chi(0)
		+ \bm{1}_{p\Z} (n) \widehat{\chi} (nx) \left( \frac{1}{(Np)^{k-j-1}} - 1 \right) B_{k-j}
		- \frac{\delta_{j, 1}}{2} \chi(0).
	\end{align}
	Thus, we obtain the second equality.
\end{proof}

By \zcref{thm:reciprocity_gen'd_Ded_sum}, we obtain the following reciprocity formula.

\begin{cor}\label{cor:reciprocity_gen'd_Ded_sum_Gamma(N)}
	For any periodic maps $ \chi, \psi \colon \Z/N\Z \to \bbC $ and any $ \gamma = \smat{a & b \\ c & d} \in \Gamma(N) $ with $ cx+d \neq 0 $, the generalized Dedekind sum $ S_{E_k^{\chi, \psi}}(x) $ computed in \zcref{prop:Eisenstein_period_map_L-func} \zcref{item:prop:Eisenstein_period_map_L-func:special_value} satisfies the modular transformation 
	\[
		\left( \eval{S_{E_k^{\chi, \psi}}}_{2-k} \gamma \right) (x) - S_{E_k^{\chi, \psi}} (x)
		=
		R_{E_k^{\chi, \psi}, k, \gamma} (x)  - \frac{a_{E_k^{\chi, \psi}, 0}^{(x)}}{p^k} \frac{c}{cx+d},
	\]
	where
	\begin{align}
		R_{E_k^{\chi, \psi}, \gamma} (X) 
		&\coloneqq
		-\sum_{j=1}^{k-1} \binom{k-2}{j-1} \widehat{L}_{E_k^{\chi, \psi}} \left( j; -\frac{d}{c} \right) \left( \frac{cX+d}{c} \right)^{k-j-1}
		\\
		&\phant
		- \frac{a_{E_k^{\chi, \psi}, 0}}{k-1} \left( \left( \frac{cX+d}{c} \right)^{k-1} + \frac{1}{(-c)^{k-1} (cX+d)} \right)
		\quad \in \frac{1}{cX+d} \bbC[X]
	\end{align}
	and $ \widehat{L}_{E_k^{\chi, \psi}} \left( j; -d/c \right) $, $ a_{E_k^{\chi, \psi}, 0} $, and $ a_{E_k^{\chi, \psi}, 0}^{(x)} $ are computed in \zcref{prop:Eisenstein_period_map_L-func} \zcref{item:prop:Eisenstein_period_map_L-func:special_value} and \zcref{prop:Eisenstein_period_map}.
\end{cor}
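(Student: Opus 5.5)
The corollary is \zcref{thm:reciprocity_gen'd_Ded_sum} applied to $f = E_k^{\chi, \psi}$ and $\Gamma = \Gamma(N)$, with the ingredients made explicit. The plan is to check the hypotheses of that theorem and then identify each term.

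First, by \zcref{prop:Eisenstein_vector} \zcref{item:prop:Eisenstein_vector:mod_form} and \zcref{dfn:Eisenstein_period_map}, $E_k^{\chi, \psi}$ is a finite linear combination of the $E_k^v$. Hence it is a modular form of weight $k \ge 2$ for $\Gamma(N)$, and its expansion is in powers of $q^{1/N}$ by \zcref{prop:Eisenstein_period_map} \zcref{item:prop:Eisenstein_period_map:Fourier_exp}. So \zcref{thm:reciprocity_gen'd_Ded_sum} applies for every $\gamma \in \Gamma(N)$ with $cx+d \ne 0$. It gives exactly
\[
(\eval{S_{E_k^{\chi,\psi}}}_{2-k}\gamma)(x) - S_{E_k^{\chi,\psi}}(x) = R_{E_k^{\chi,\psi},\gamma}(x) - \frac{a_{E_k^{\chi,\psi},0}^{(x)}}{p^k}\frac{c}{cx+d},
\]
with $R$ as in \zcref{dfn:gen'd_Ded_sum}.

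Second, I reconcile the displayed form of $R_{E_k^{\chi,\psi},\gamma}$ with \zcref{dfn:gen'd_Ded_sum}. This only needs the index shift $j \mapsto j-1$: the sum $\sum_{j=0}^{k-2}\binom{k-2}{j}\widehat{L}_f(j+1;-d/c)\bigl(\tfrac{cX+d}{c}\bigr)^{k-j-2}$ becomes $\sum_{j=1}^{k-1}\binom{k-2}{j-1}\widehat{L}_f(j;-d/c)\bigl(\tfrac{cX+d}{c}\bigr)^{k-j-1}$. The term containing $a_0$ is unchanged. One small point needs checking: \zcref{prop:Eisenstein_period_map_L-func} \zcref{item:prop:Eisenstein_period_map_L-func:special_value} is stated for $x = r/p$ with $p > 0$. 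To apply it at $-d/c$, write $-d/c$ in lowest terms with positive denominator; this is harmless because $\widehat{L}_f(s;x)$ depends only on the rational number $x$. When $c = 0$, $R$ is interpreted via \zcref{lem:rep_period_poly}. In that case $\gamma = \pm I$ up to translation by $N$, and the identity is trivial since $f$ is invariant under $\tau \mapsto \tau + N$.

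Third, I justify the explicit inputs. The special values $\widehat{L}_{E_k^{\chi,\psi}}(j;-d/c)$ for $1 \le j \le k-1$ come from \zcref{prop:Eisenstein_period_map_L-func} \zcref{item:prop:Eisenstein_period_map_L-func:special_value}, and $S_{E_k^{\chi,\psi}}(x)$ is the case $j = k-1$. The constant term $a_{E_k^{\chi,\psi},0}$ is read off from \zcref{prop:Eisenstein_period_map} \zcref{item:prop:Eisenstein_period_map:Fourier_exp}. The cusp constant $a^{(x)}_{E_k^{\chi,\psi},0}$ comes from \zcref{prop:Eisenstein_period_map} \zcref{item:prop:Eisenstein_period_map:cusp} with $a/c = r/p$. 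That formula is stated via a matrix $\delta = \smat{r & r' \\ p & p'}$, matching the convention in \zcref{thm:reciprocity_gen'd_Ded_sum}.

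The main thing to verify is consistency of sign conventions: $a_0^{(x)}$ in \zcref{thm:reciprocity_gen'd_Ded_sum} must use the same $\delta$ with $p > 0$ as in \zcref{prop:Eisenstein_vector} \zcref{item:prop:Eisenstein_vector:cusp}. I expect this bookkeeping, rather than any analytic issue, to be the only potential obstacle. Everything else is direct substitution.
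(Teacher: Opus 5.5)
Your proposal is correct and matches the paper: the paper also obtains this corollary in one line by applying \zcref{thm:reciprocity_gen'd_Ded_sum} to $f = E_k^{\chi,\psi}$ and $\Gamma = \Gamma(N)$, with the displayed $R_{E_k^{\chi,\psi},\gamma}$ being \zcref{dfn:gen'd_Ded_sum} after the shift $j \mapsto j-1$; your remarks on $c = 0$ and on writing $-d/c$ in lowest terms are harmless extra bookkeeping. One caveat, inherited from the paper's \zcref{prop:Eisenstein_vector} rather than introduced by you: for $k = 2$ the individual $E_2^v$ are only quasimodular (their sum over all $v$ is a multiple of the classical quasimodular weight-$2$ Eisenstein series of level one), so $E_2^{\chi,\psi}$ is a genuine modular form only when $\psi(0)\sum_{m \in \Z/N\Z}\chi(m) = 0$, and only then is $\widehat{L}_{E_2^{\chi,\psi}}(s;x)$ regular at $s=1$, so that $S_{E_2^{\chi,\psi}}(x)$ is defined; this condition does hold in the paper's application $\chi = \psi = \bm{1}_2$.
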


\zcref{cor:main_gen'd_Ded_sum_Eisenstein} is the special case of this corollary.

\section{Higher weight generalized Dedekind sums of level 2} \label{sec:Dedekind_sum_level_2}


Throughout this section, we fix an even integer $ g \ge 0 $. 
As before, we fix a rational number $ x = r/p \in \Q $ with coprime integers $ r \in \Z $ and $ p \in \Z_{>0} $.

In this section, we focus on the following trigonometric sums.
Their relation to the signature in TQFT will be established later.

\begin{dfn}
	We define the \emph{generalized Dedekind sums of level $ 2 $ and weight $ -g $} as
	\[
	S_g^{\odd} \left( x \right)
	\coloneqq
	\frac{1}{p^{g+1}} 
	\sum_{\substack{
			1 \le n \le p - 2, \\
			\text{$ n $ odd}
	}}
	\frac{\cot^{(g)} (\pi n/2p)}{\sin (\pi nx)} 
	\in \Q.
	\]
\end{dfn}

\begin{rem}
	The sum $ S_g^{\odd} (x) $ defines an odd map $ \Q/2\Z \to \Q $.
\end{rem}

\begin{rem} \label{rem:Dedekind_sum_level_2_express}
	By taking $ n \mapsto 2p - n $, we have
	\[
	S_g^{\odd} \left( x \right)
	=
	\frac{1}{2p^{g+1}} 
	\sum_{\substack{
			1 \le n \le 2p, \\
			\text{$ n $ odd}, \, n \neq p
	}}
	\frac{\cot^{(g)} (\pi n/2p)}{\sin (\pi nx)}.
	\]
\end{rem}

In order to describe quantum modularity of $ S_g^{\odd} (x) $, we need the following Eisenstein series of level 2.

\begin{dfn} \label{dfn:Eisenstein}
	For an even integer $ k \in 2\Z $, we define 
	\[
	E_k^{\odd} (\tau)
	\coloneqq
	\sum_{n \ge 1, \, \odd} \sigma_{k - 1} (n) q^{n/2},
	\quad
	\sigma_{k - 1} (n) \coloneqq \sum_{d \mid n} d^{k - 1}.
	\]
\end{dfn}

The sum $ S_g^{\odd} (x) $ satisfies the following properties.
Here, we recall that the congruence subgroup $ \Gamma(2) $ is generated by $ \smat{1 & 2 \\ 0 & 1} $ and $ \smat{1 & 0 \\ 2 & 1} $.

\begin{thm} \label{thm:Dedekind_sum_level_2}
	\begin{enumerate}
		\item \label{item:thm:Dedekind_sum_level_2:expression}
		We have
		\begin{align}
			S_g^{\odd} \left( x \right)
			&=
			(-1)^{g/2 + 1} 2^{g+2} S_{E_{g+2}^{\odd}} (x)
			\\
			&=
			\frac{\iu}{p^{g+1}}
			\sum_{\substack{
					1 \le m, n < 2p, \\
					\text{$ m, n $ odd}
			}}
			\bm{e} \left( \frac{mnx}{2} \right)
			B_1 \left( \frac{m}{2p} \right)
			\cot^{(g)} \left( \frac{\pi n}{2p} \right).
			\\
			&=
			\frac{(-1)^{g/2} 2^{2g+1}}{g+1}
			\sum_{\substack{
					1 \le m \le 2p, \\
					m \, \odd
			}}
			\left(
			\widetilde{B}_{g+1} \left( \frac{mx}{2} \right) - \widetilde{B}_{g+1} \left( \frac{mx}{2} + \frac{1}{2} \right)
			\right)
			B_1 \left( \frac{m}{2p} \right).
		\end{align}
		
		\item \label{item:thm:Dedekind_sum_level_2:asymp}
		For any $ \varepsilon > 0 $, uniformly for $ \tau $ with $ \varepsilon \le \arg(\tau) \le \pi - \varepsilon $, we have
		\begin{align}
			E_{-g}^{\odd} \left( x + \tau \right)
			&=
			- \frac{(\pi \iu)^{g+1}}{(g+1)!}
			\frac{a_{E_{g+2}^{\odd}, 0}^{(x)}}{p^{g+2}} \frac{1}{\tau}
			+ \iu \frac{\pi^{g+1}}{2^{g+2} g!} S_g^{\odd} \left( x \right)
			+ O(\tau) \quad \text{ as } \tau \to 0,
		\end{align}
		where 
		\[
		a_{E_{g+2}^{\odd}, 0}^{(x)} 
		=
		\bm{1}_2 (p) (-1)^{r+1} \frac{2^{g+2} - 1}{4(g+2)} B_{g+2}
		\]
		
		\item \label{item:thm:Dedekind_sum_level_2:quantum_modular}
		For any $ \gamma = \smat{a & b \\ c & d} \in \Gamma(2) $, we have
		\[
		(cx + d)^{g} S_g^{\odd} (\gamma x) - S_g^{\odd} (x)
		=
		(-1)^{g/2 + 1} 2^{g+2} R_{E_{g+2}^{\odd}, \gamma} (x)
		+ (-1)^{g/2} 2^{g+2} a_{E_{g+2}^{\odd}, 0}^{(x)} \frac{c}{p^{g+1}(cr+dp)}.
		\]
		where 
		\[
		R_{E_{g+2}^{\odd}, \gamma} (X)
		=
		\int_{\gamma^{-1} (\iu \infty)}^{\iu \infty} E_{g+2}^{\odd} (z) (X - z)^{g} dz
		\in \bbC[X].
		\]
		In particular, in the case when $ g=0, 2 $, we have
		\begin{align}
			S_0^{\odd} \left( \frac{x}{2x+1} \right) - S_0^{\odd} (x)
			&=
			\frac{1}{2} + \frac{\bm{1}_2 (p) (-1)^{r+1}}{2p(2r+p)},
			\\
			(2x + 1)^2 S_2^{\odd} \left( \frac{x}{2x+1} \right) - S_2^{\odd} (x)
			&=
			2x^2 + 2x + 1 + \frac{\bm{1}_2 (p) (-1)^{r+1}}{p^3 (2r+p)}.
		\end{align}
	\end{enumerate}
\end{thm}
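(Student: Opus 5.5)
The plan is to realize $E_{g+2}^{\odd}$ as an Eisenstein series $E_k^{\chi,\psi}$ with $N=2$, $k=g+2$, and then read off all three parts from \zcref{prop:Eisenstein_period_map_L-func}, \zcref{cor:reciprocity_gen'd_Ded_sum_Gamma(N)} and \zcref{prop:Eichler_int_Ded_sum}.

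\textbf{Identification and part (1).} Take $\chi=\psi=\bm{1}_{1+2\Z}$ on $\Z/2\Z$. Then $\chi(0)=\psi(0)=0$, and both maps are even, so $\varepsilon\varepsilon'=1=(-1)^k$. In the Fourier expansion of \zcref{prop:Eisenstein_period_map}, the term $\sigma_{k-1}^{\chi,\psi}(n)+(-1)^k\sigma_{k-1}^{\chi^-,\psi^-}(n)$ equals $2\sigma_{k-1}(n)$ for $n$ odd and vanishes for $n$ even. Hence $E_k^{\chi,\psi}=2E_{g+2}^{\odd}$, and $S_{E_{g+2}^{\odd}}=\tfrac12 S_{E_k^{\chi,\psi}}$.

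Applying the last display of \zcref{prop:Eisenstein_period_map_L-func}, we have $\widehat\chi(0)=\tfrac12$ and $\widehat\chi(1)=-\tfrac12$. This gives $\cot_{\widehat\chi}(z)=\tfrac12(\cot z-\cot(z+\pi/2))=\tfrac12(\cot z+\tan z)=1/\sin 2z$. With $z=\pi nx/2$ this is $1/\sin(\pi nx)$. The resulting sum over odd $n$ with $1\le n\le 2p-1$ and $n\ne p$ matches \zcref{rem:Dedekind_sum_level_2_express}. Bookkeeping the constants $(\iu/2)^{g+2}\cdot\tfrac12$ then yields the factor $(-1)^{g/2+1}2^{g+2}$.

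The other two expressions for $S_g^{\odd}$ come the same way:
\begin{itemize}
\item the $B_1$–$\cot$ double sum follows from part \zcref{item:prop:Eisenstein_period_map_L-func:special_value} at $j=k-1$, using $B_1(m/2p)$;
\item the Bernoulli form follows from the first equality of \zcref{item:prop:Eisenstein_period_map_L-func:other_rep}, with $\widehat\psi=(\tfrac12,-\tfrac12)$ producing the difference $\widetilde B_{g+1}(mx/2)-\widetilde B_{g+1}(mx/2+1/2)$.
\end{itemize}

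\textbf{Part (2).} The function $E_{-g}^{\odd}$ is, up to the constant in \zcref{rem:Eichler_int_Fourier}, the Eichler integral $\calE_{E_{g+2}^{\odd}}$. Indeed, $\sigma_{-g-1}(n)=n^{-g-1}\sigma_{g+1}(n)$, so $E_{-g}^{\odd}=-\bigl(\tfrac{2\pi\iu}{2}\bigr)^{g+1}\frac{1}{g!}\calE$. Now \zcref{prop:Eichler_int_Ded_sum} gives the expansion as $\tau\to 0$; combining it with part (1) converts $S_{E}$ into $S_g^{\odd}$. Uniformity in sectors follows from the Mellin proof (\zcref{cor:asymp_mod_form}).

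For $a_0^{(x)}$, use \zcref{prop:Eisenstein_period_map} \zcref{item:prop:Eisenstein_period_map:cusp} with $N=2$, $(a,c)=(r,p)$, and halve the result. The sum over $l\in\Z/2\Z$ only survives at $l=1$ with $\chi(-p)=\bm{1}_2(p)$, i.e.\ $p$ odd, and $\widehat\psi(r)=\tfrac12(-1)^r$. The remaining factor $\sum_m(-1)^m\widetilde B_k(m/2)=B_k(0)-B_k(1/2)=(1-(2^{1-k}-1))B_k$ gives the stated constant after simplification.

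\textbf{Part (3).} Substitute part (1) into \zcref{cor:reciprocity_gen'd_Ded_sum_Gamma(N)}. Since $a_0=0$, the period polynomial $R$ reduces to $\int_{\gamma^{-1}(\iu\infty)}^{\iu\infty}E(z)(X-z)^g\,dz$, by \zcref{lem:rep_period_poly} with $z_0\to\gamma^{-1}(\iu\infty)$.

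For the explicit cases we take $\gamma=\smat{1&0\\2&1}$. We then need $R$ for $g=0,2$, i.e.\ the values $\widehat L_{E}(j;-1/2)$ for $1\le j\le g+1$. These are computed from the trigonometric formula of \zcref{prop:Eisenstein_period_map_L-func} at $x=-1/2$, $p=2$. The odd-$j$ values with $\varepsilon\varepsilon'=(-1)^{j-1}$ are covered directly; for the remaining ones, use the general formula, or evaluate the integral via $E_{g+2}^{\odd}$ transformation under $\tau\mapsto-1/\tau$.

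This computation of the small period polynomials is the main obstacle, since it requires careful constant tracking. As a fallback, I would verify $R(X)$ by the known values $S_0^{\odd},S_2^{\odd}$ at a few small $x$: since $R$ has degree $\le g$, a few values determine it.
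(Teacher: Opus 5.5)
Your treatment of parts (1) and (2), and of the general identity in (3), follows the paper's proof. This covers the identification $E_{g+2}^{\odd}=\tfrac12E_{g+2}^{\bm{1}_2,\bm{1}_2}$, and the specialization of \zcref{prop:Eisenstein_period_map_L-func} with $\widehat{\bm{1}_2}(l)=(-1)^l/2$ and $\cot_{\widehat{\bm{1}_2}}(z)=1/\sin 2z$. It also covers the relation $E_{-g}^{\odd}=-\frac{(\pi\iu)^{g+1}}{g!}\calE_{E_{g+2}^{\odd}}$ fed into \zcref{prop:Eichler_int_Ded_sum}, the cusp-constant computation, and \zcref{thm:reciprocity_gen'd_Ded_sum}.

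You depart from the paper only in the explicit cases $g=0,2$. The paper (\zcref{lem:Eisenstein_wt_0_2}) gets $R_{E_2^{\odd},\gamma}=-\tfrac18$ from the behaviour of $E_0^{\odd}=\tfrac14\log(\theta_3/\theta_4)$ under $\tau\mapsto-1/\tau$ and $\tau\mapsto\tau-2$. It imports the weight-$4$ case from March\'e--Masbaum. It mentions, but does not carry out, your route of evaluating the defining formula of $R$ through $\widehat L_E(j;-\tfrac12)$.

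Your route works and is short. With $N=p=2$, only $m\in\{1,3\}$ enter the Bernoulli-sum form in \zcref{prop:Eisenstein_period_map_L-func}. Its extra terms vanish because $\chi(0)=\psi(0)=0$, so the even index $j=2$ is handled by the same formula. One finds $\widehat L_{E_2^{\odd}}(1;-\tfrac12)=\tfrac18$ and $\widehat L_{E_4^{\odd}}(j;-\tfrac12)=-\tfrac18,\,0,\,-\tfrac1{32}$ for $j=1,2,3$. The middle value is $0$ because $B_2(\tfrac34)=B_2(\tfrac14)$. Hence $R_{E_2^{\odd},\gamma}=-\tfrac18$ and $R_{E_4^{\odd},\gamma}(X)=\tfrac18(X+\tfrac12)^2+\tfrac1{32}=\tfrac1{16}(2X^2+2X+1)$, so your numerical fallback is unnecessary.

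Your approach is self-contained, needs no theta functions or outside citation, and runs mechanically for any $g$. The paper's approach additionally yields the transformation laws of $E_0^{\odd}$ and $E_{-2}^{\odd}$ themselves. Those laws are what tie the result to March\'e--Masbaum's Eichler-integral picture.

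One correction concerns the integral formula for $R$. When $a_0=0$, \zcref{lem:rep_period_poly} gives $R_{f,\gamma}(X)=\int_{\iu\infty}^{\gamma^{-1}(\iu\infty)}f(z)(X-z)^{k-2}\,dz$. This is the reverse of the orientation printed in the statement, and the definition of $R$ agrees with the lemma: for $k=2$, $R=-\widehat L_f(1;-d/c)=-\int_{-d/c}^{\iu\infty}f$.

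For $g=0$ the correctly oriented integral is $\frac{1}{\pi\iu}\lim_{t\to0^+}E_0^{\odd}(-\tfrac12+\iu t)=\frac{1}{\pi\iu}\cdot\bigl(-\tfrac{\pi\iu}{8}\bigr)=-\tfrac18$. The printed orientation would give $+\tfrac18$, which turns the constant $\tfrac12$ in the $g=0$ formula into $-\tfrac12$. Your $L$-value computation starts from the definition of $R$ and is unaffected. However, your claim that the printed integral follows from \zcref{lem:rep_period_poly} is off by a sign. If you instead evaluate the integral directly via $\tau\mapsto-1/\tau$, you must integrate from $\iu\infty$ to $\gamma^{-1}(\iu\infty)$.
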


\begin{figure}[htbp]
	\centering
	
	\begin{subfigure}{0.45\linewidth}
		\centering
		\includegraphics[width=\linewidth]{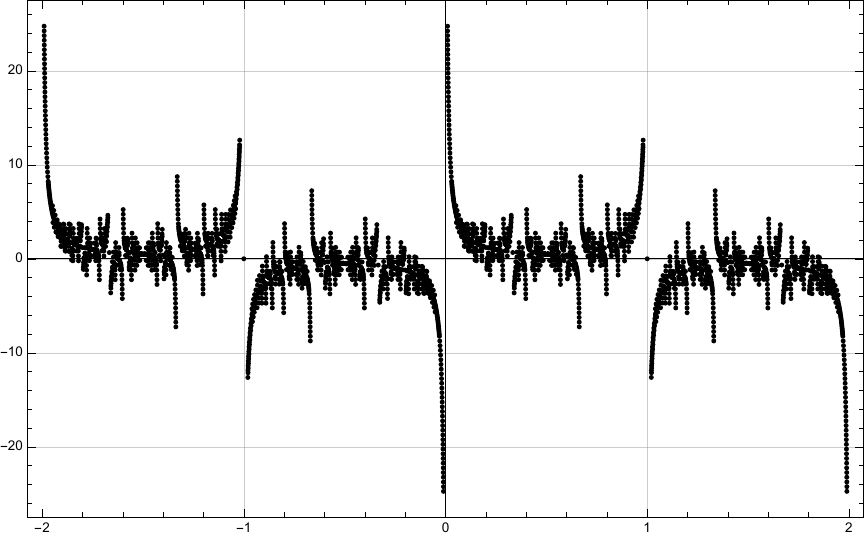}
		\caption{The graph of $ S_0^{\odd} (x) $.}
	\end{subfigure}
	\hfill
	\begin{subfigure}{0.45\linewidth}
		\centering
		\includegraphics[width=\linewidth]{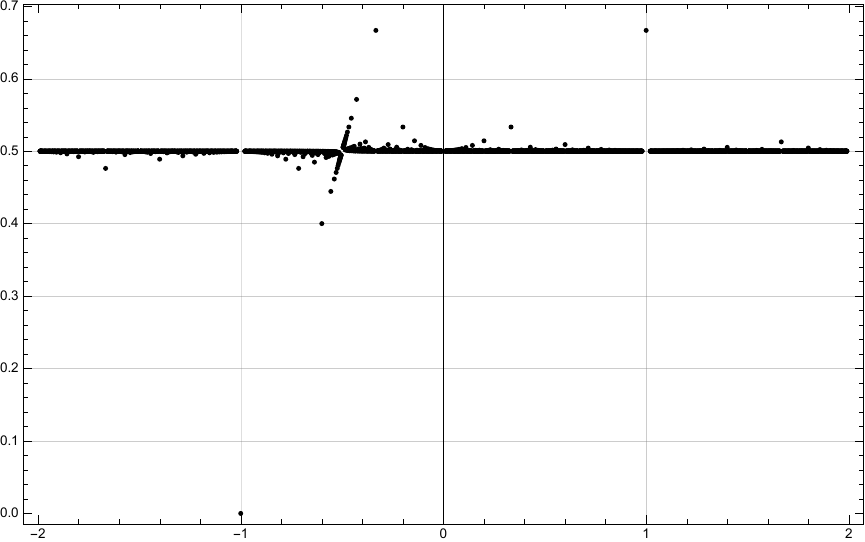}
		\caption{The graph of $ S_0^{\odd} (x/(2x+1)) - S_0^{\odd} (x) $.}
	\end{subfigure}
	
	\medskip
	
	\begin{subfigure}{0.45\linewidth}
		\centering
		\includegraphics[width=\linewidth]{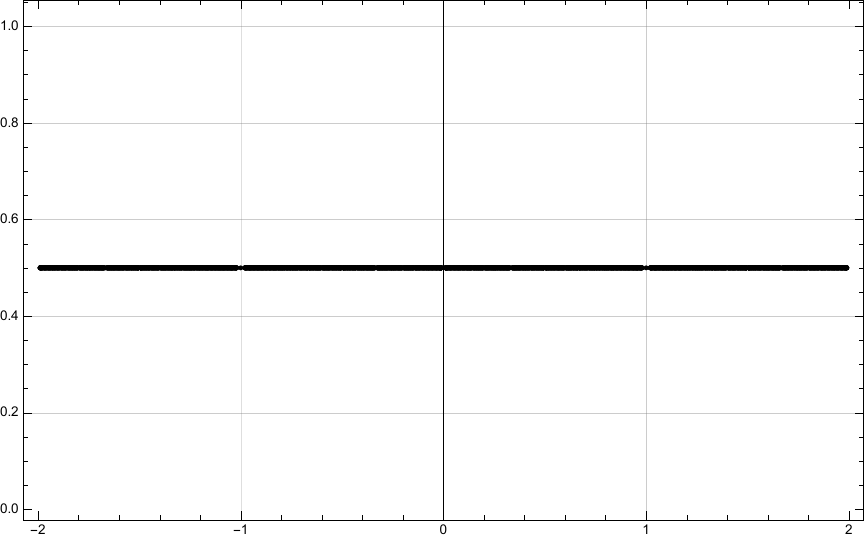}
		\captionsetup{width=1.3\textwidth}
		\caption{The graph of $ S_0^{\odd} (x/(2x+1)) - S_0^{\odd} (x) - 1/2p (2r+p) $, which coincides with the graph of $ 1/2 $.}
	\end{subfigure}
	
	\caption{Quantum modularity for $ S_0^{\odd} (x) $.}
	\label{fig:S_0}
\end{figure}

\begin{figure}[htbp]
	\centering
	
	\begin{subfigure}{0.45\linewidth}
		\centering
		\includegraphics[width=\linewidth]{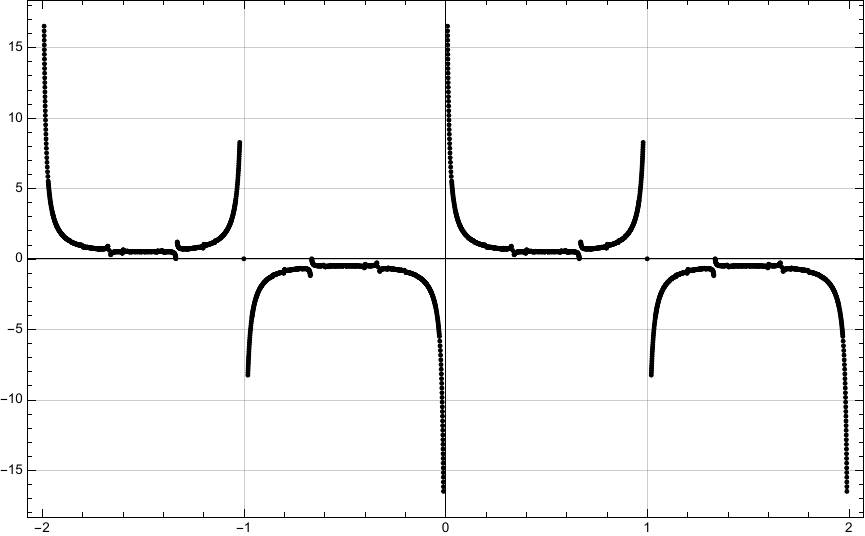}
		\caption{The graph of $ S_2^{\odd} (x) $.}
	\end{subfigure}
	\hfill
	\begin{subfigure}{0.45\linewidth}
		\centering
		\includegraphics[width=\linewidth]{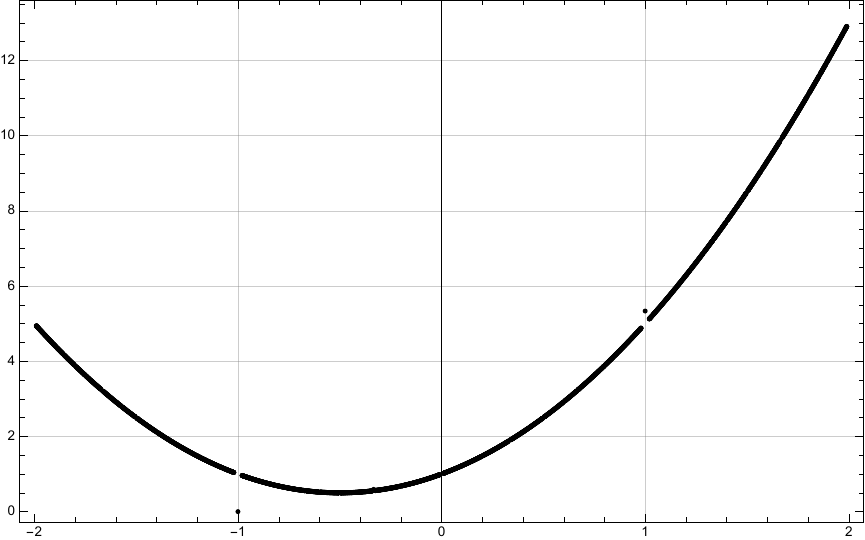}
		\caption{The graph of $ (2x+1)^2 S_2^{\odd} (x/(2x+1)) - S_2^{\odd} (x) $.}
	\end{subfigure}
	
	\medskip
	
	\begin{subfigure}{0.45\linewidth}
		\centering
		\includegraphics[width=\linewidth]{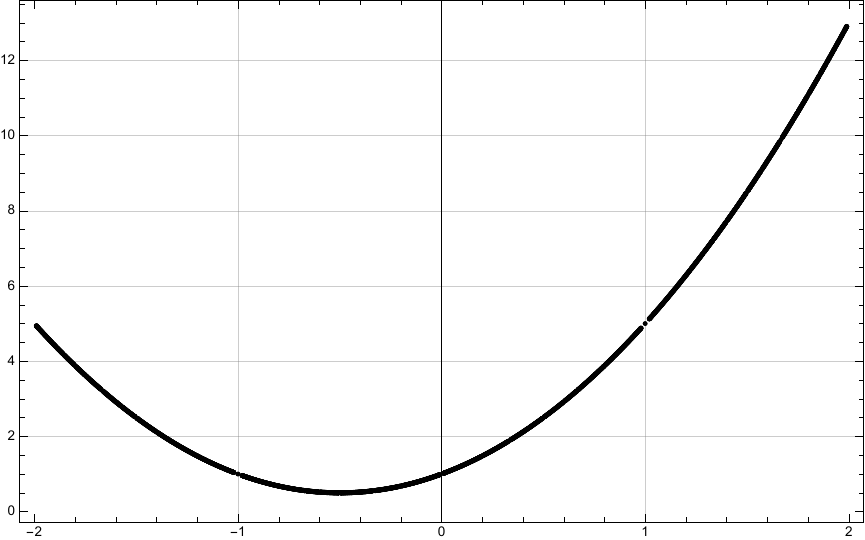}
		\captionsetup{width=1.2\textwidth}
		\caption{The graph of $ (2x+1)^2 S_2^{\odd} (x/(2x+1)) - S_2^{\odd} (x) - 1/p^3 (2r+p) $, which coincides with the graph of $ 2x^2 + 2x + 1 $.}
	\end{subfigure}
	
	\caption{Quantum modularity for $ S_2^{\odd} (x) $.}
	\label{fig:S_2}
\end{figure}

The quantum modularity of $ S_0^{\odd} (x) $ and $ S_2^{\odd} (x) $ is illustrated in \zcref{fig:S_0,fig:S_2}.
Here, the graphs are plotted for $ x = r/p \in (-2, 2) $ with coprime odd integers $ r $ and $1 \le p \le 99$.

\begin{proof}
	By \zcref{prop:Eisenstein_period_map}, we have $ E_{g+2}^{\odd} (\tau) = \frac{1}{2} E_{g+2}^{\bm{1}_2, \bm{1}_2} (\tau) $, where $ \bm{1}_2 \coloneqq \bm{1}_{1 + 2\Z} $ is the characteristic function of $ 1 + 2\Z $.
	In particular, the Eisenstein series $ E_{g+2}^{\odd} (\tau) $ is a modular form for $ \Gamma(2) $ of weight $ g+2 $.
	Thus, \zcref{item:thm:Dedekind_sum_level_2:expression} follows from \zcref{prop:Eisenstein_period_map} and
	\[
	\widehat{\bm{1}_2} (l) 
	= \frac{(-1)^l}{2},
	\quad
	\widetilde{B}_j^{\widehat{\bm{1}_2}} (\alpha) 
	= \widetilde{B}_j (\alpha) - \widetilde{B}_j \left( \alpha + \frac{1}{2} \right),
	\quad
	\cot_{\widehat{\bm{1}_2}} (z)
	=
	\frac{1}{2} \left( \cot z + \tan z \right)
	=
	\frac{1}{\sin (2z)}.
	\]

	By \zcref{rem:Eichler_int_Fourier}, we have
	\[
	E_{-g}^{\odd} (\tau) = 
	-\frac{(\pi\iu)^{g+1}}{g!} \calE_{E_{g+2}^{\odd}} (\tau).
	\]
	By \zcref{prop:Eisenstein_period_map} \zcref{item:prop:Eisenstein_period_map:cusp}, we also have
	\begin{align}
		a_{E_{g+2}^{\odd}, 0}^{(x)} 
		&=
		-\frac{2^{g+1}}{2(g+2)}
		\sum_{l, m \in \Z/2\Z} \bm{1}_2 (-pl) \widehat{\bm{1}_2 } (rl)
		\bm{e} \left( -\frac{lm}{2} \right) \widetilde{B}_{g+2} \left( \frac{m}{2} \right)
		\\
		&=
		\bm{1}_2 (p) (-1)^{r+1} \frac{2^{g-1}}{g+2}
		\sum_{m \in \Z/2\Z} (-1)^m \widetilde{B}_{g+2} \left( \frac{m}{2} \right)
		\\
		&=
		\bm{1}_2 (p) (-1)^{r+1} \frac{2^{g+2} - 1}{4(g+2)} B_{g+2}
	\end{align}
	since
	\[
	B_j(0) = B_j,
	\quad
	B_j \left( \frac{1}{2} \right)
	=
	(2^{-j} - 1) B_j.
	\]
	Thus, \zcref{item:thm:Dedekind_sum_level_2:asymp} follows from \zcref{prop:Eichler_int_Ded_sum}.
	
	The first part of \zcref{item:thm:Dedekind_sum_level_2:quantum_modular} follows from \zcref{thm:reciprocity_gen'd_Ded_sum}.
	The second part follows from the following lemma.
\end{proof}

\begin{lem} \label{lem:Eisenstein_wt_0_2}
	We have
	\[
	R_{E_2^{\odd}, \gamma} (X)
	=
	-\frac{1}{8},
	\quad
	R_{E_4^{\odd}, \gamma} (X)
	=
	\frac{1}{16} (2X^2 + 2X + 1).
	\]
	Equivalently,
	\begin{align}
		E_0^{\odd} \left( \frac{\tau}{2\tau + 1} \right) - E_0^{\odd} (\tau)
		&=
		\frac{\pi \iu}{8},
		\\
		(2\tau + 1)^2 E_{-2}^{\odd} \left( \frac{\tau}{2\tau + 1} \right) - E_{-2}^{\odd} (\tau)
		&=
		-\frac{(\pi \iu)^3}{32} (2\tau^2 + 2\tau + 1).
	\end{align}
\end{lem}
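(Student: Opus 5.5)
The plan is to compute $R_{f,\gamma}(X)$ directly from \zcref{dfn:gen'd_Ded_sum}, for $f = E_2^{\odd}$ and $f = E_4^{\odd}$, with $\gamma = \smat{1 & 0 \\ 2 & 1}$. This is the matrix relevant for $x \mapsto x/(2x+1)$, so $c=2$, $d=1$ and $-d/c = -1/2$. Since $E_k^{\odd}$ involves only odd $n$, its constant term vanishes: $a_0 = 0$. The regularizing term therefore disappears, and
\[
R_{f,\gamma}(X) = -\sum_{j=0}^{k-2} \binom{k-2}{j} \widehat{L}_f\left(j+1; -\frac12\right) \left(\frac{2X+1}{2}\right)^{k-j-2},
\]
a polynomial in $X$. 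So everything reduces to the special values $\widehat{L}_f(j; -1/2)$ for $1 \le j \le k-1$.

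To get these values, I would not use \zcref{prop:Eisenstein_period_map_L-func}, since there $p=2$ is even, which is awkward for the odd sums. Instead I would factor the twisted $L$-function directly. Here $N=2$, so $L_f(s;-1/2) = 2^s \sum_{n\ \odd} \sigma_{k-1}(n) n^{-s} \bm{e}(-n/4)$. For odd $n$ one has $\bm{e}(-n/4) = -\iu \chi_{-4}(n)$, where $\chi_{-4}$ is the nontrivial character mod $4$. Since $\chi_{-4}$ is completely multiplicative, writing $n = dm$ gives
\[
L_f\left(s; -\frac12\right) = -\iu\, 2^s\, L(s,\chi_{-4})\, L(s-k+1,\chi_{-4}).
\]
The following classical values are then needed:
\begin{itemize}
\item $L(1,\chi_{-4}) = \pi/4$ and $L(3,\chi_{-4}) = \pi^3/32$;
\item $L(0,\chi_{-4}) = 1/2$;
\item $L(-1,\chi_{-4}) = 0$, a trivial zero since $\chi_{-4}$ is odd;
\item $L(-2,\chi_{-4}) = E_2/2 = -1/2$, via Euler numbers or equivalently generalized Bernoulli numbers.
\end{itemize}

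For $k=2$ we get $L_f(1;-1/2) = -\iu\pi/4$. Then $\widehat{L}_f(1;-1/2) = -(2\pi)^{-1}\bm{e}(-1/4)\, L_f(1;-1/2) = 1/8$, hence $R_{E_2^{\odd},\gamma} = -1/8$.

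For $k=4$ there are three values. At $j=2$ the value vanishes because of the factor $L(-1,\chi_{-4})=0$. The values at $j=1$ (involving $L(1)L(-2)$) and $j=3$ (involving $L(3)L(0)$) are nonzero rationals after the powers of $\pi$ cancel against the $\Gamma$-factors. Plugging them into the binomial sum with powers of $(2X+1)/2$ should give $\frac{1}{16}(2X^2+2X+1)$. As a consistency check, this polynomial is invariant under $X \mapsto -1-X$, which matches the symmetry of the single pair of terms $j=1,3$.

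The equivalent Eichler-integral statements then follow from \zcref{lem:Eichler_int_mod_trans} together with the identity $E_{-g}^{\odd} = -\frac{(\pi\iu)^{g+1}}{g!}\calE_{E_{g+2}^{\odd}}$ from the proof of \zcref{thm:Dedekind_sum_level_2}:
\begin{itemize}
\item for $g=0$, multiplying $-1/8$ by $-\pi\iu$ gives $\pi\iu/8$;
\item for $g=2$, multiplying by $-(\pi\iu)^3/2$ gives $-\frac{(\pi\iu)^3}{32}(2\tau^2+2\tau+1)$.
\end{itemize}

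The main obstacle is bookkeeping rather than ideas. The delicate points are the signs and phases: the factor $\bm{e}(-s/4)$, the $-\iu$ from the twist, and the sign conventions for $L(-2,\chi_{-4})$. I would cross-check the final polynomial numerically against the known period polynomial of the weight-$4$ Eisenstein series of level $2$, or against the figure values for small $p$.
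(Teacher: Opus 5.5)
Your proposal is correct, and it takes a genuinely different route from the paper. The paper never evaluates the $L$-values in the definition of $R_{f,\gamma}$; it only remarks that this would be possible. For $E_4^{\odd}$ it cites March\'e--Masbaum (their Proposition 6.10) for the $E_{-2}^{\odd}$ identity. For $E_2^{\odd}$ it proves the $E_0^{\odd}$ identity directly, in three steps: it writes $E_0^{\odd}=\frac14\log(\theta_3/\theta_4)$, expresses $\gamma$ as the conjugate of $\tau\mapsto\tau-2$ by $\tau\mapsto-1/\tau$, and uses the theta transformation laws to see that $\theta_3/\theta_4$ is multiplied by $\iu$.

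You instead work straight from the definition of $R_{f,\gamma}$, using $\widehat{L}_f(j;-\tfrac12)=\iu(-\iu)^j\pi^{-j}(j-1)!\,L(j,\chi_{-4})L(j-k+1,\chi_{-4})$. Your $k=2$ value $\widehat{L}_f(1;-\tfrac12)=\tfrac18$ is right. The bookkeeping you left open for $k=4$ does close: $\widehat{L}_f(1;-\tfrac12)=-\tfrac18$, $\widehat{L}_f(2;-\tfrac12)=0$ and $\widehat{L}_f(3;-\tfrac12)=-\tfrac1{32}$. Hence $R_{E_4^{\odd},\gamma}(X)=\tfrac1{32}(2X+1)^2+\tfrac1{32}=\tfrac1{16}(2X^2+2X+1)$, as claimed.

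Your method buys uniformity and self-containedness. It handles both weights with one computation and no external input. In fact it gives $R_{E_{g+2}^{\odd},\gamma}$ for every even $g$: the terms with even $j$ vanish by the trivial zeros of $L(s,\chi_{-4})$, and the remaining ones are rational numbers given by Euler numbers. It also pins down the constant $\pi\iu/8$ exactly. By contrast, taking $\frac14\log$ of the factor $\iu$ determines it only up to an integer multiple of $\pi\iu/2$ until a branch of the logarithm is fixed. The paper's route, for its part, needs no special values of Dirichlet $L$-functions and exhibits $E_0^{\odd}$ as the logarithm of a classical modular function. Its price is treating the two weights by unrelated arguments, one of them imported from the literature.
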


\begin{proof}
	Although the period polynomial $ R_{E_{k}^{\odd}, \gamma} (\tau) $ can be computed from \zcref{prop:Eisenstein_period_map_L-func}, we instead present here a derivation using the modular transformation formula of $ E_{2-k}^{\odd} (\tau) $.
	
	The weight 2 case is proved by March\'{e}--Masbaum~\cite[Proposition 6.10]{Marche-Masbaum}.
	
	We prove the weight 0 case.
	We recall that Jacobi's theta functions are defined as
	\begin{align}
		\theta_2 (\tau)
		&\coloneqq
		q^{1/8} \prod_{n=1}^\infty (1 - q^n) (1 + q^n)^2,
		\\
		\theta_3 (\tau)
		&\coloneqq
		\prod_{n=1}^\infty (1 - q^n) (1 + q^{n - 1/2})^2,
		\\
		\theta_4 (\tau)
		&\coloneqq
		\prod_{n=1}^\infty (1 - q^n) (1 - q^{n - 1/2})^2
	\end{align}
	and these functions satisfy modular transformation formulas
	\[
	\pmat{\theta_2 \\ \theta_3 \\ \theta_4} (\tau)
	= 
	\pmat{\bm{e} (1/8) \theta_2 \\ \theta_3 \\ \theta_4} ( \tau),
	\quad
	\pmat{\theta_2 \\ \theta_3 \\ \theta_4} \left( -\frac{1}{\tau} \right)
	= 
	\sqrt{\frac{\tau}{\iu}}
	\pmat{\theta_4 \\ \theta_3 \\ \theta_2} (\tau).
	\]
	Since we have 
	\[
	\frac{1}{4} \log \frac{\theta_3 (\tau)}{\theta_4 (\tau)}
	=
	\frac{1}{2} \log \sum_{n \ge 1, \, \odd} \frac{1 + q^{n/2}}{1 - q^{n/2}}
	=
	E_0^{\odd} (\tau)
	\quad \text{ and } \quad
	\pmat{1 & 0 \\ 2 & 1}
	=
	\pmat{0 & -1 \\ 1 & 0} \pmat{1 & -2 \\ 0 & 1} \pmat{0 & 1 \\ -1 & 0},
	\]
	we obtain the claim.
\end{proof}

\begin{lem} \label{lem:Dedekind_sum_level_2:h=1}
	It holds that
	\begin{align}
		S_0^{\odd} \left( \frac{1}{p} \right)
		&=
		\frac{1}{4} \left( p - \frac{1}{p} \right).
	\end{align}
\end{lem}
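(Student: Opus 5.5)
The plan is to evaluate the sum in closed form rather than invoke the quantum modularity. Here $x = 1/p$ (so $r = 1$), and I take $p$ odd, as in the TQFT setting. In this case the summand simplifies drastically. Put $\theta = \pi n/2p$, so that $\pi n x = \pi n/p = 2\theta$. Then
\[
\frac{\cot \theta}{\sin 2\theta} = \frac{\cos\theta}{\sin\theta \cdot 2\sin\theta\cos\theta} = \frac{1}{2\sin^2\theta},
\]
and therefore
\[
S_0^{\odd}\left(\frac{1}{p}\right) = \frac{1}{2p}\sum_{\substack{1 \le n \le p-2,\\ n \text{ odd}}} \frac{1}{\sin^2(\pi n/2p)}.
\]

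Next I symmetrize. As in \zcref{rem:Dedekind_sum_level_2_express}, the substitution $n \mapsto 2p-n$ leaves $\sin^2(\pi n/2p)$ invariant. Since $p$ is odd, it maps the odd integers in $[1,p-2]$ bijectively onto the odd integers in $[p+2, 2p-1]$. Hence twice our sum equals the sum of $\csc^2(\pi n/2p)$ over all odd $n \in [1,2p-1]$ with $n \neq p$. I then write the sum over odd $n$ as the sum over all $1 \le n \le 2p-1$ minus the sum over even $n = 2m$ with $1 \le m \le p-1$. The $n = p$ term equals $\csc^2(\pi/2) = 1$ and must be subtracted as well.

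The key input is the classical identity
\[
\sum_{n=1}^{M-1} \frac{1}{\sin^2(\pi n/M)} = \frac{M^2-1}{3}.
\]
I would derive it from \zcref{lem:periodic_Bernoullli_poly_disc_Fourier_exp} with $j=2$ and $m=0$. That lemma gives $B_2 = B_2/M^2 + \frac{2}{(-2M\iu)^2}\sum_{n=1}^{M-1}\cot'(\pi n/M)$. Using $\cot' = -\csc^2$ and $B_2 = 1/6$, this rearranges to the stated value.

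Applying the identity with $M = 2p$ and with $M = p$, the sum over odd $n$ is $\frac{4p^2-1}{3} - \frac{p^2-1}{3} = p^2$. Removing the $n=p$ term leaves $p^2 - 1$, so our original sum is $(p^2-1)/2$. Therefore
\[
S_0^{\odd}(1/p) = \frac{p^2-1}{4p} = \frac14\left(p - \frac1p\right).
\]
I expect no real obstacle here. The only points needing care are the parity bookkeeping in the symmetrization, which is where the oddness of $p$ is used, and the sign conventions when specializing \zcref{lem:periodic_Bernoullli_poly_disc_Fourier_exp}.
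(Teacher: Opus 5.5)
Your proposal is correct and follows essentially the same route as the paper: reduce the summand to $1/(2\sin^2(\pi n/2p))$ by the double-angle formula, symmetrize via $n \mapsto 2p-n$, write the result as the full sum minus the even terms minus the $n=p$ term, and evaluate $\sum_{n=1}^{M-1}\csc^2(\pi n/M) = (M^2-1)/3$ from the discrete Fourier expansion of $\widetilde{B}_2$ (the only difference is that you apply the double-angle step before symmetrizing, which is immaterial). Your explicit assumption that $p$ is odd is genuinely needed, since for $p=2$ the sum is empty while the right-hand side is $3/8$; the paper uses it implicitly at the same two places, namely the symmetrization covering all odd $n \neq p$ and the subtraction of the $n=p$ term.
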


\begin{proof}
	By  \zcref{rem:Dedekind_sum_level_2_express} and the double angle formula $ \sin (2x) = 2 \sin x \cos x $, we have
	\begin{align}
		S_0^{\odd} \left( \frac{1}{p} \right)
		&=
		\frac{1}{4p} 
		\sum_{\substack{
				1 \le n \le 2p, \\
				\text{$ n $ odd}, \, n \neq p
		}}
		\frac{1}{\sin^2 (\pi n/2p)}
		\\
		&=
		\frac{1}{4p} 
		\left(
		\sum_{1 \le n \le 2p-1} \frac{1}{\sin^2 (\pi n/2p)}
		- \sum_{1 \le n \le p-1} \frac{1}{\sin^2 (\pi n/p)}
		- 1
		\right).
	\end{align}
	Since $ 1/\sin^2 z = -\cot' z $, by \zcref{lem:periodic_Bernoullli_poly_disc_Fourier_exp} we have
	\begin{equation} \label{eq:sin_sum}
	\sum_{1 \le n \le p-1} \frac{1}{\sin^2 (\pi n/p)}
	=
	-\frac{(-2\iu p)^2}{2} \left( \widetilde{B}_2 \left( \frac{0}{p} \right) - \frac{B_2}{p^2} \right)
	=
	\frac{p^2 - 1}{3}.
	\end{equation}
	Thus, we obtain the formula.
	
\end{proof}


\section{Proof of main theorem} \label{sec:proof}


In this section, we prove \zcref{thm:main,thm:main_expression_sigma}.

As before, we fix a rational number $ x = r/p \in \Q $ with coprime integers $ r \in \Z $ and $ p \in \Z_{>0} $.
Throughout this section, we assume that $ 1 \le r < p $ and $ r $ and $ p $ are coprime.
In this case, one can consider the signatures of $ \SU(2) $-TQFT introduced by Deroin--March\'{e}~\cite{Deroin-Marche}.
We denote by $ \sigma_2 (x) $ the signatures of $ \SU(2) $-TQFT for a closed surface of genus 2.

First, we give a simple expression of $ \sigma_2(x) $.

\begin{prop} \label{prop:sign_TQFT_simple_expression_g=2}
	We have
	\[
	\sigma_2 \left( x \right)
	=
	\frac{2}{p} 
	\sum_{\substack{
			1 \le n \le p - 2, \\
			\text{$ n $ odd}
	}}
	\frac{\cot^3 (\pi n/2p)}{\sin (\pi nx)}
	=
	p^2 S_2^{\odd} \left( x \right) - 2 S_0^{\odd} \left( x \right).
	\]
\end{prop}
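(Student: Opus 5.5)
The second equality in \zcref{prop:sign_TQFT_simple_expression_g=2} is elementary, so I will dispose of it first. Since $\cot' z = -(1+\cot^2 z)$, we get $\cot'' z = 2\cot z + 2\cot^3 z$. Hence
\[
p^2 S_2^{\odd}(x) = \frac{1}{p}\sum_{\substack{1\le n\le p-2,\\ n\ \odd}} \frac{2\cot(\pi n/2p)+2\cot^3(\pi n/2p)}{\sin(\pi n x)}
\quad\text{and}\quad
2S_0^{\odd}(x) = \frac{2}{p}\sum_{\substack{1\le n\le p-2,\\ n\ \odd}} \frac{\cot(\pi n/2p)}{\sin(\pi n x)} .
\]
The $\cot$ terms cancel in $p^2 S_2^{\odd}(x) - 2S_0^{\odd}(x)$, and what remains is exactly the cubic sum.

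For the first equality I start from the March\'e--Masbaum formula \eqref{eq:trig_formula}. Put $\theta = \pi n/2p$ and $\varphi = \pi n x/2$, so that $\sin(\pi n x) = \sin 2\varphi$. Expanding $T(n;x)$ with the addition formulas, the part proportional to $p$ is
\[
p\sin 2\varphi\,(2\cos 3\theta + 6\cos\theta) = 8p\sin 2\varphi\cos^3\theta ,
\]
and the part coming from the $\varepsilon$-weights is
\[
\cos 2\varphi\,(6\sin\theta - 2\sin 3\theta) = 8\cos 2\varphi\sin^3\theta .
\]
Therefore $T(n;x) = 8p\sin 2\varphi\cos^3\theta + 8\cos 2\varphi\sin^3\theta$. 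Dividing by $\sin^3\theta\sin^2\varphi$, each summand of \eqref{eq:trig_formula} becomes
\[
16p\cot^3\theta\cot\varphi + 8\cot^2\varphi - 8 .
\]
Next I use $\cot\varphi = 1/\sin 2\varphi + \cot 2\varphi$. This produces a multiple of the desired sum $\sum \cot^3\theta/\sin 2\varphi$, together with two auxiliary sums: $\sum \cot^3\theta\cot 2\varphi$, and the pure sum $\sum \cot^2(\pi n r/2p)$ over odd $n$.

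The pure sum is handled as in \zcref{lem:Dedekind_sum_level_2:h=1}. As $n$ runs over odd residues, $nr$ runs over odd residues modulo $2p$, so by \zcref{rem:Dedekind_sum_level_2_express} the sum is independent of $r$. It is then evaluated by the same method as \eqref{eq:sin_sum}, using \zcref{lem:periodic_Bernoullli_poly_disc_Fourier_exp}. I expect this constant, together with the count of odd $n \le p-2$, to cancel the prefactor $\frac{1}{6p^2}-\frac16$.

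The main obstacle is the sum $\sum \cot^3\theta\cot 2\varphi$. Its coefficient must combine with the $1/\sin 2\varphi$ term to produce exactly the normalization $2/p$. I would first extend all sums to odd $1\le n\le 2p$ with $n\ne p$, using the symmetry $n\mapsto 2p-n$ under which these summands are invariant. I would then test the shift $n\mapsto n+p$, which sends $\cot\theta \mapsto -\tan\theta$ and moves $\varphi$ by $\pi r/2$, to relate $\cot 2\varphi$-type sums to $1/\sin 2\varphi$-type sums. If that does not suffice, I would expand both sums as double Bernoulli sums via \zcref{lem:periodic_Bernoullli_poly_disc_Fourier_exp}, exactly as in \zcref{thm:Dedekind_sum_level_2}~\zcref{item:thm:Dedekind_sum_level_2:expression}. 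The point would be that the $\cot 2\varphi$ sum corresponds to the character $\widehat{\bm{1}}$ in place of $\widehat{\bm{1}_2}$, so it should reduce to a known multiple of $S_2^{\odd}$ and $S_0^{\odd}$ plus explicit constants. Finally, I would check the resulting identity numerically for small $p$, for instance $x = 1/3$ and $x = 1/5$, to fix the constants.
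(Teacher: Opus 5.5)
Your proof of the second equality via $\cot''=2\cot^3+2\cot$ is exactly the paper's argument. Your expansion $T(n;x)=8p\sin(\pi nx)\cos^3\theta+8\cos(\pi nx)\sin^3\theta$ is also correct. The gap is in the next step. If you divide by $\sin^3\theta\,\sin^2(\pi nx/2)$, as displayed in \eqref{eq:trig_formula}, then the identity ``right-hand side of \eqref{eq:trig_formula} $=\frac{2}{p}\sum\cot^3\theta/\sin(\pi nx)$'' that you are aiming for is false. So no relation between $\sum\cot^3\theta\cot(\pi nx)$ and $\sum\cot^3\theta/\sin(\pi nx)$ can repair the normalization.

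Take $x=1/3$, where only $n=1$ occurs. Then $T(1;1/3)=14$, your summand $16p\cot^3\theta\cot\varphi+8\cot^2\varphi-8$ equals $448$, and the right-hand side of \eqref{eq:trig_formula} is $\frac{1}{54}-\frac16+\frac{448}{36}=\frac{332}{27}$. By contrast, $\frac23\cot^3(\pi/6)/\sin(\pi/3)=4$. Your expected cancellation of constants also fails: $\frac{2}{p^2}\sum(\cot^2\varphi-1)+\frac{1}{6p^2}-\frac16=\frac{(p-1)(5p-7)}{6p^2}\neq 0$.

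The factor-$2$ mismatch you noticed is the symptom of a misprint. The second factor of the denominator must be $\sin^2(\pi nx)$, and that is what the paper's computation actually divides by, even though its proof repeats the displayed $\sin^2(\pi nx/2)$. With $\sin^2(\pi nx)$ one gets $\sigma_2(1/3)=4$ and $\sigma_2(1/5)=20$. These are integers, and they are consistent with the March\'e--Masbaum conjecture.

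With the correct denominator one gets directly
\[
\frac{T(n;x)}{4p^2\sin^3\theta\,\sin^2(\pi nx)}=\frac{2}{p}\,\frac{\cot^3\theta}{\sin(\pi nx)}+\frac{2}{p^2}\,\frac{\cos(\pi nx)}{\sin^2(\pi nx)},
\]
so no $\cot(\pi nx)$-sum ever appears. The ingredient missing from your plan is the evaluation
\[
\sum_{\substack{1\le n\le p-2,\\ n\ \text{odd}}}\frac{\cos(\pi nx)}{\sin^2(\pi nx)}=\frac{p^2-1}{12},
\]
which, after multiplication by $2/p^2$, exactly cancels $\frac{1}{6p^2}-\frac16$. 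The paper proves it in four steps:
\begin{enumerate}
\item Substitute $n\mapsto p-n$; since $r$ is odd, this flips the sign and moves the sum to even $n$.
\item Symmetrize the sum to the range $1\le n\le p-1$.
\item Eliminate $r$ via $n\mapsto rn$ modulo $p$.
\item Use $\frac{\cos 2z}{\sin^2 2z}=\frac14\bigl(\frac{1}{\sin^2 z}-\frac{1}{\cos^2 z}\bigr)$ together with $\sum_{n=1}^{p-1}\sin^{-2}(\pi n/p)=\frac{p^2-1}{3}$ and $\sum_{n=1}^{p-1}\cos^{-2}(\pi n/p)=p^2-1$.
\end{enumerate}
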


\begin{proof}
	The second equality follows from 
	$ \cot'' z  = 2 \cot^3 z + 2 \cot z $.
	
	We prove the first equality.
	 March\'{e}--Masbaum~\cite[Theorem 3.1]{Marche-Masbaum} proved the expression
	 \[
	 \sigma_2 \left( x \right)
	 =
	 \frac{1}{6p^2} - \frac{1}{6} + \frac{1}{4p^2} 
	 \sum_{\substack{
	 		1 \le n \le p - 2, \\
	 		\text{$ n $ odd}
	 }}
	 \frac{T(n; x)}{\sin^3 (\pi n/2p) \sin^2 (\pi nx/2)},
	 \]
	 where
	 \[
	 T(n; x)
	 \coloneqq
	 \sum_{\varepsilon \in \{ \pm 1 \}} (p + \varepsilon)
	 \left(
	 \sin \left( \frac{\pi (2r - 3 \varepsilon)n}{2p} \right)
	 + 3 \sin \left( \frac{\pi (2r + \varepsilon)n}{2p} \right)
	 \right).
	 \]
	By the addition theorem and the triple-angle formula, we have
	\begin{align}
		T(n; r/p)
		&=
		\sum_{\varepsilon \in \{ \pm 1 \}} (p + \varepsilon)
		\left(
		\sin \left( \pi nx \right) \cos \left( \frac{3\pi n}{2p} \right)
		- \varepsilon \cos \left( \pi nx \right) \sin \left( \frac{3\pi n}{2p} \right)
		\right.
		\\
		&\phantom{
			=
			\sum_{\varepsilon \in \{ \pm 1 \}} (p + \varepsilon)
			\left( \right.
		}
		\left.
		+ 3 \sin \left( \pi nx \right) \cos \left( \frac{\pi n}{2p} \right)
		+ 3 \varepsilon \cos \left( \pi nx \right) \sin \left( \frac{\pi n}{2p} \right)
		\right)
		\\
		&=
		\sum_{\varepsilon \in \{ \pm 1 \}} 4(p + \varepsilon)
		\left(
		\sin \left( \pi nx \right) \cos^3 \left( \frac{\pi n}{2p} \right)
		+ \varepsilon \cos \left( \pi nx \right) \sin^3 \left( \frac{\pi n}{2p} \right)
		\right).
	\end{align}
	Thus, we have
	\begin{align}
		\sigma_2 \left( x \right)
		- \frac{1}{6p^2} + \frac{1}{6}
		&=
		\frac{1}{p^2} 
		\sum_{\substack{
				1 \le n \le p - 2, \\
				\text{$ n $ odd}
		}}
		\sum_{\varepsilon \in \{ \pm 1 \}} (p + \varepsilon)
		\left(
		\frac{\cot^3 (\pi n/2p)}{\sin (\pi nx)} + \varepsilon \frac{\cos (\pi nx)}{\sin^2 (\pi nx)}
		\right)
		\\
		&=
		2
		\sum_{\substack{
				1 \le n \le p - 2, \\
				\text{$ n $ odd}
		}}
		\left(
		\frac{1}{p} \frac{\cot^3 (\pi n/2p)}{\sin (\pi nx)} 
		+ \frac{1}{p^2} \frac{\cos (\pi nx)}{\sin^2 (\pi nx)}
		\right).
	\end{align}
	Therefore, the claim follows from 
	\begin{equation} \label{eq:sign_TQFT_simple_expression:tirg_sum}
		S \coloneqq
		\sum_{\substack{
				1 \le n \le p - 2, \\
				\text{$ n $ odd}
		}}
		\frac{\cos (\pi nx)}{\sin^2 (\pi nx)}
		=
		\frac{p^2 - 1}{12}.
	\end{equation}
	
	We prove this equality.
	Applying the substitution $ n \mapsto p-n $, we have
	\[
	S =
	-\sum_{\substack{
			1 \le n \le p - 2, \\
			\text{$ n $ even}
	}}
	\frac{\cos (\pi nx)}{\sin^2 (\pi nx)}
	=
	-\sum_{1 \le n \le \frac{p-1}{2}}
	\frac{\cos (2\pi nx)}{\sin^2 (2\pi nx)}.
	\]
	Since the summand is invariant under $ n \mapsto p-n $, we have
	\[
	S =
	-\frac{1}{2} \sum_{1 \le n \le p-1}
	\frac{\cos (2\pi nx)}{\sin^2 (2\pi nx)}.
	\]
	Since $ r $ and $ p $ are coprime, the map $ n \mapsto rn $ induces a substitution on $ \Z/p\Z \smallsetminus \{ 0 \} $.
	Thus, we have
	\begin{align}
		S &=
		-\frac{1}{2} \sum_{1 \le n \le p-1}
		\frac{\cos (2\pi n/p)}{\sin^2 (2\pi n/p)}
		=
		-\frac{1}{8} \sum_{1 \le n \le p-1}
		\left(
		\frac{1}{\sin^2 (\pi n/p)} - \frac{1}{\cos^2 (\pi n/p)}
		\right)
		\\
		&=
		-\frac{p^2 - 1}{24}
		+ \frac{1}{8} \sum_{1 \le n \le p-1} \frac{1}{\cos^2 (\pi n/p)}
	\end{align}
	by \zcref{eq:sin_sum}.
	We have
	\begin{align}
		\sum_{1 \le n \le p-1} \frac{1}{\cos^2 (\pi n/p)}
		&=
		\sum_{1 \le n \le p-1} \frac{1}{\sin^2 (\pi/2 - \pi n/p)}
		=
		\sum_{\substack{
				1 \le n \le 2p-1, \\
				\text{$ n $ odd}, \, n \neq p
		}}
		\frac{1}{\sin^2 (\pi n/2p)}
		\\
		&=
		\left(
			\sum_{1 \le n \le 2p-1} 
			- \sum_{\substack{
					1 \le n \le 2p-1, \\
					\text{$ n $ even}
			}}
			- \sum_{n=p}
		\right)
		\frac{1}{\sin^2 (\pi n/2p)}.
	\end{align}
	By \zcref{eq:sin_sum}, we have
	\[
	\sum_{1 \le n \le p-1} \frac{1}{\cos^2 (\pi n/p)}
	=
	\frac{(2p)^2 - 1}{3} - \frac{p^2 - 1}{3} - 1
	=
	p^2 - 1.
	\]
	Thus, we obtain \zcref{eq:sign_TQFT_simple_expression:tirg_sum}.
\end{proof}

We also obtain the following expression.

\begin{cor} \label{cor:sign_TQFT_Eichler_int}
	We have
	\[
	\sigma_2 (x)
	=
	\lim_{\tau \to 0} \left(
		p^2 \frac{16}{\pi^3 \iu} E_{-2}^{\odd} (\tau + x)
		- \frac{8}{\pi \iu} E_0^{\odd} (\tau + x)
		- \frac{1}{3p^2 \tau}
	\right).
	\]
\end{cor}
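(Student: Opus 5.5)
The plan is to combine the finite trigonometric identity of \zcref{prop:sign_TQFT_simple_expression_g=2} with the radial asymptotics of the level~$2$ Eichler integrals in \zcref{thm:Dedekind_sum_level_2} \zcref{item:thm:Dedekind_sum_level_2:asymp}. By \zcref{prop:sign_TQFT_simple_expression_g=2} we already have $\sigma_2(x) = p^2 S_2^{\odd}(x) - 2 S_0^{\odd}(x)$. So it suffices to express $S_0^{\odd}(x)$ and $S_2^{\odd}(x)$ as constant terms of $E_0^{\odd}(x+\tau)$ and $E_{-2}^{\odd}(x+\tau)$ as $\tau \to 0$, and to check that the two polar parts combine to exactly $\frac{1}{3p^2\tau}$.

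\textbf{Step 1: evaluate the Fourier constants.} In our situation $r$ and $p$ are both odd, so $\bm{1}_2(p)=1$ and $(-1)^{r+1}=1$. The formula in \zcref{thm:Dedekind_sum_level_2} \zcref{item:thm:Dedekind_sum_level_2:asymp} then gives
\[
a_{E_2^{\odd},0}^{(x)} = \tfrac{3}{8}B_2 = \tfrac{1}{16},
\qquad
a_{E_4^{\odd},0}^{(x)} = \tfrac{15}{16}B_4 = -\tfrac{1}{32}.
\]

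\textbf{Step 2: solve the asymptotics for $S_g^{\odd}$.} For $g=0$, the asymptotic formula reads
\[
E_0^{\odd}(x+\tau) = -\frac{\pi\iu}{16p^2}\frac1\tau + \frac{\iu\pi}{4}S_0^{\odd}(x) + O(\tau).
\]
Multiplying by $-\frac{8}{\pi\iu}$ gives $-2S_0^{\odd}(x)$ plus the polar term $\frac{1}{2p^2\tau}$. For $g=2$ the formula reads
\[
E_{-2}^{\odd}(x+\tau) = \frac{(\pi\iu)^3}{192\,p^4}\frac1\tau + c_2^{-1} S_2^{\odd}(x) + O(\tau),
\]
where $c_2$ is the normalizing constant read off from \zcref{thm:Dedekind_sum_level_2}. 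Multiplying by $p^2 c_2$ gives $p^2S_2^{\odd}(x)$ plus a polar term proportional to $\frac{1}{p^2\tau}$.

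\textbf{Step 3: cancel the poles.} Adding the two contributions, the $O(\tau)$ terms vanish in the limit, and the polar terms should sum to $\frac{1}{3p^2\tau}$. Subtracting this and letting $\tau\to0$ inside a fixed sector $\varepsilon\le\arg\tau\le\pi-\varepsilon$ (where the asymptotics are uniform) yields the claim.

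\textbf{Main obstacle: the normalization in the $g=2$ term.} The delicate point is bookkeeping the constants $g!$, $(g+1)!$ and $2^{g+2}$ in the $g=2$ term. As displayed, \zcref{thm:Dedekind_sum_level_2} \zcref{item:thm:Dedekind_sum_level_2:asymp} gives $c_2^{-1} = \iu\pi^3/32$, hence $c_2 = 32/(\pi^3\iu)$. With this value the $g=2$ polar term is $-\frac{1}{6p^2\tau}$, and indeed $\frac{1}{2p^2}-\frac{1}{6p^2}=\frac{1}{3p^2}$, as required. However, $c_2 = 32/(\pi^3\iu)$ is twice the coefficient $16/(\pi^3\iu)$ appearing in the statement of \zcref{cor:sign_TQFT_Eichler_int}. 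With $16/(\pi^3\iu)$ the $g=2$ term would give $\frac{p^2}{2}S_2^{\odd}(x)$ rather than $p^2S_2^{\odd}(x)$, and its polar term would be $-\frac{1}{12p^2\tau}$; the subtraction of $\frac{1}{3p^2\tau}$ would then no longer cancel the pole.

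I would therefore first re-derive the relation $E_{-g}^{\odd} = -\frac{(\pi\iu)^{g+1}}{g!}\calE_{E_{g+2}^{\odd}}$ from \zcref{rem:Eichler_int_Fourier}, checking the factor $N^{k-1}=2^{g+1}$ and the $(k-2)!$. I would also check the factor $(-1)^{g/2+1}2^{g+2}$ in \zcref{thm:Dedekind_sum_level_2} \zcref{item:thm:Dedekind_sum_level_2:expression}. This should settle whether the correct coefficient is $32/(\pi^3\iu)$ or $16/(\pi^3\iu)$. The pole-cancellation check in Step~3 then serves as a consistency test, since only the value $32/(\pi^3\iu)$ makes the polar parts sum to $\frac{1}{3p^2\tau}$.
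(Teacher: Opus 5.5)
You follow the paper's route exactly: $\sigma_2=p^2S_2^{\odd}-2S_0^{\odd}$, combined with the level-$2$ radial asymptotics for $g=0,2$, with the poles cancelling. Your suspicion about the $g=2$ normalization is correct, and it is a defect of the statement rather than of your bookkeeping. Both $E_{-2}^{\odd}=-\frac{(\pi\iu)^3}{2}\calE_{E_4^{\odd}}$ and $S_2^{\odd}=16\,S_{E_4^{\odd}}$ are right. For instance, at $x=\frac13$ a direct Hurwitz-zeta computation gives $S_{E_4^{\odd}}(\frac13)=\frac1{27}$, $S_2^{\odd}(\frac13)=\frac{16}{27}$, and constant term $\iu\pi^3/54$ for $E_{-2}^{\odd}(\frac13+\tau)$. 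So the multiplier must be $32/(\pi^3\iu)$; with $16/(\pi^3\iu)$ the bracket diverges like $1/(12p^2\tau)$. The paper's proof has the same slip. It writes $\frac{16}{\pi^3\iu}E_{-2}^{\odd}(\tau+x)=-\frac{1}{6p^2\tau}+S_2^{\odd}(x)+O(\tau)$, while its own general formula (multiplier $2^{g+2}g!/(\pi^{g+1}\iu)$, polar part proportional to $p^{-g-2}$) gives $\frac{32}{\pi^3\iu}$ and $-\frac{1}{6p^4\tau}$.

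What you do not catch is that the $g=0$ input is also wrong, by an additive constant. For $k=2$ the value $S_{E_2^{\odd}}(x)=\widehat L_{E_2^{\odd}}(1;x)$ sits on the pole at $s=1$ of the second Hurwitz factor in
\[
L_{E_2^{\odd}}(s;x)=2^s(2p)^{1-2s}\sum_{a,b}\bm{e}\Bigl(\frac{abr}{2p}\Bigr)\,\zeta\Bigl(s-1;\frac{a}{2p}\Bigr)\,\zeta\Bigl(s;\frac{b}{2p}\Bigr),
\]
with $a,b$ running over the odd residues in $[1,2p-1]$. In the polar part, the $b$-sum vanishes unless $a=p$, where it equals $-p$. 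There $\zeta(0;\frac12)=0$ removes the pole, but $\zeta'(0;\frac12)=-\frac12\log 2$ leaves a finite contribution $\frac{\log 2}{2}$ to $L_{E_2^{\odd}}(1;x)$. The paper's evaluation of $\widehat L_{E_k^{\chi,\psi}}(j;x)$ loses this term, because it substitutes the Bernoulli value of $\zeta(s-k+1;\cdot)$ at $s=j$ before resolving the pole of $\zeta(s;\cdot)$ at $s=j=1$. Hence for $r,p$ odd we have $S_{E_2^{\odd}}(x)=-\frac14S_0^{\odd}(x)+\frac{\iu\log 2}{4\pi}$, i.e.
\[
E_0^{\odd}(x+\tau)=-\frac{\pi\iu}{16p^2\tau}+\frac{\pi\iu}{4}S_0^{\odd}(x)+\frac{\log 2}{4}+O(\tau).
\]
You can confirm this without $L$-functions:
\begin{itemize}
\item $E_0^{\odd}=\frac14\log(\theta_3/\theta_4)$ and $(\theta_3/\theta_4)(\tau)=(\theta_3/\theta_2)(-1/\tau)$, with $\theta_2=2q^{1/8}\prod_n(1-q^n)(1+q^n)^2$. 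Hence $E_0^{\odd}(\iu y)=\frac{\pi}{16y}-\frac{\log 2}{4}+o(1)$.
\item Since $E_0^{\odd}(1+\tau)=-E_0^{\odd}(\tau)$, the constant at $x=1$ is $\frac{\log 2}{4}$, although $S_0^{\odd}(1)=0$.
\item Transporting by $\tau\mapsto\tau/(2\tau+1)$, under which $E_0^{\odd}$ shifts by $\pi\iu/8$, gives constant $\frac{\pi\iu}{6}+\frac{\log 2}{4}$ at $x=\frac13$, versus $\frac{\pi\iu}{4}S_0^{\odd}(\frac13)=\frac{\pi\iu}{6}$.
\end{itemize}
This constant cancels in the weight-$0$ quantum modularity of $S_0^{\odd}$, so the proof of the March\'e--Masbaum identity is unaffected. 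Here, however, it survives. Even with $32/(\pi^3\iu)$, the limit equals $\sigma_2(x)+\frac{2\iu\log 2}{\pi}$; for example it is $4+\frac{2\iu\log 2}{\pi}$ at $x=\frac13$, where $\sigma_2(\frac13)=4$. Your pole-cancellation test cannot detect this because it only sees polar parts. The correct identity is
\[
\sigma_2(x)=\lim_{\tau\to0}\Bigl(p^2\frac{32}{\pi^3\iu}E_{-2}^{\odd}(\tau+x)-\frac{8}{\pi\iu}\Bigl(E_0^{\odd}(\tau+x)-\frac{\log 2}{4}\Bigr)-\frac{1}{3p^2\tau}\Bigr).
\]
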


\begin{proof}
	By \zcref{thm:Dedekind_sum_level_2} \zcref{item:thm:Dedekind_sum_level_2:asymp}, we have
	\begin{align}
		\frac{2^{g+2} g!}{\pi^{g+1} \iu} E_{-g}^{\odd} \left( x + \tau \right)
		&=
		\frac{(-1)^{g/2+1} 2^{g+2}}{g+1}
		\frac{a_{E_{g+2}^{\odd}, 0}^{(x)}}{p^{g+2}} \frac{1}{\tau}
		+ S_g^{\odd} \left( x \right)
		+ O(\tau) \quad \text{ as } \tau \to 0,
		\\
		a_{E_{g+2}^{\odd}, 0}^{(x)} 
		&=
		\frac{2^{g+2} - 1}{4(g+2)} B_{g+2}.
	\end{align}
	Thus, we have
	\begin{alignat}{2}
		\frac{4}{\pi \iu} E_0^{\odd} (\tau + x)
		&=
		-\frac{1}{4p^2 \tau} + S_0^{\odd} \left( x \right)
		& &+ O(\tau) 
		\quad \text{ as } \tau \to 0,
		\\
		\frac{16}{\pi^3 \iu} E_{-2}^{\odd} (\tau + x)
		&=
		-\frac{1}{6p^2 \tau} + S_2^{\odd} \left( x \right)
		& &+ O(\tau) 
		\quad \text{ as } \tau \to 0.
	\end{alignat}
	Therefore, we obtain the claim by \zcref{prop:sign_TQFT_simple_expression_g=2}.
\end{proof}

Finally, we prove \zcref{thm:main}.

\begin{proof}[Proof of \zcref{thm:main}]
	By \zcref{thm:Dedekind_sum_level_2} \zcref{item:thm:Dedekind_sum_level_2:quantum_modular,prop:sign_TQFT_simple_expression_g=2}, we have
	\begin{align}
		\sigma_2 \left( \frac{x}{2x+1} \right) - \sigma_2 (x)
		&=
		p^2 \left( (2x + 1)^2 S_2^{\odd} \left( \frac{x}{2x+1} \right) - S_2^{\odd} (x) \right)
		-2 \left( S_0^{\odd} \left( \frac{x}{2x+1} \right) - S_0^{\odd} (x) \right)
		\\
		&=
		p^2 \left( 2x^2 + 2x + 1 + \frac{1}{p^3 (2r+p)} \right)
		-2 \left( \frac{1}{2} + \frac{1}{2p(2r+p)} \right)
		\\
		&=
		2r^2 + 2rp + p^2 - 1.
	\end{align}
\end{proof}

\section*{Acknowledgements}


The author is sincerely grateful to Julien March\'{e} and Gregor Masbaum for their careful reading of earlier versions of this paper, for many valuable and insightful comments, and for their generous and thoughtful suggestions.
The author would like to thank Toshiki Matsusaka for introducing me to related studies.
The author is supported by JSPS KAKENHI Grant Number JP 20J20308.


\bibliographystyle{alpha}
\bibliography{quantum_inv,modular}

\end{document}